\newcommand{\ZZ}{\mathbb Z}
\newcommand{\QQ}{\mathbb Q}
\newcommand{\CC}{\mathbb C}
\newcommand{\NN}{\mathbb N}
\newcommand{\cpt}{\mathbb K}
\def\bK{\mathbf K}
\def\cO{\mathcal O}
\def\cM{\mathcal M}
\def\nc{\mathrm{nc}}
\def\Perf{\mathtt{Perf}}
\def\dg{\mathrm{dg}}
\def\Ch{\mathtt{Ch}}
\def\C{\mathrm{C}}
\def\KKcat{\mathtt{KK}}
\def\colim{\mathrm{colim}}
\def\id{\mathrm{id}}
\def\st{{\sf stab}}
\def\St{{\sf Stab}}
\def\KQ{\mathrm{KQ}}
\def\SSet{\mathcal{S}\mathtt{et}_\Delta}
\def\ker{\mathrm{ker}}
\def\Mod{\mathtt{Mod}}
\def\op{\mathrm{op}}
\def\prot{\hat{\otimes}}
\def\Fun{\mathrm{Fun}}
\def\NSH{\mathtt{NSH}}
\def\K{\mathrm{K}}
\def\S{\mathcal{S}}
\def\iNSp{\mathtt{NSp}}
\def\pNSp{\mathtt{NSp'}}
\def\iNS{\mathtt{N}\mathcal{S_*}}
\def\ikk{\mathtt{KK_\infty}}
\def\hNSp{\mathtt{hNSp}}
\def\kc{\mathrm{k}^{\mathrm{cn}}}
\def\Ecn{\mathtt{E^{cn}_\infty}}
\def\hEcn{\mathtt{hE^{cn}_\infty}}
\def\Sp{\mathtt{Sp}}
\def\hpfd{\mathtt{HPf_{dg}}}
\def\hpf{\mathtt{HPf_\infty}}
\def\CAlg{\mathtt{C^*}}
\def\catex{\mathtt{Cat^{ex}_\infty}}
\def\catinf{\mathtt{Cat_\infty}}
\def\hpfw{\mathtt{HPf}}
\def\KK{\mathrm{KK}}
\def\E{\mathrm{E}}
\def\H{\mathrm{H}}
\def\Sigmat{\Sigma^\infty_S}
\def\cC{\mathcal C}
\def\1{\bf{1}}
\def\Csep{\mathtt{SC^*}}
\def\G{\mathrm{G}}
\def\bu{\mathtt{bu}}
\def\iCsep{\text{$\mathtt{SC_\infty^*}$}}
\def\uloc{\mathcal{U}_{\mathtt{loc}}}
\def\mloc{\mathcal{M}_{\mathtt{loc}}}
\def\Map{\mathrm{Map}}
\def\bKn{\mathbf{K}^{\mathtt{nc}}}
\def\bKc{\mathbf{K}^{\mathtt{c}}}
\def\cD{\mathcal D}
\def\Alg{\mathtt{Alg}}
\def\TT{\mathbb{T}}
\def\kk{\mathrm{kk}}
\def\cA{\mathcal{A}}
\def\cB{\mathcal{B}}
\newcommand{\map}{\rightarrow}
\newcommand{\functor}{\rightarrow}
\def\h{\mathtt{h}}
\def\Ind{\mathrm{Ind}}
\def\hosc{\mathtt{HoSC^*}}
\def\Oinf{\mathcal{O}_\infty}
\def\Fin{\mathtt{Fin}}
\def\N{\mathrm{N}}
\def\one{\mathbf{1}}
\def\cQ{\mathcal{Q}}
\newcommand{\beq}{\begin{eqnarray}}
\newcommand{\beqn}{\begin{eqnarray*}}
\newcommand{\eeq}{\end{eqnarray}}
\newcommand{\eeqn}{\end{eqnarray*}}
\theoremstyle{definition}
\newtheorem{thm}{Theorem}[section]
\theoremstyle{definition}
\newtheorem{dis}[thm]{Disambiguation}
\newtheorem{thmquote}{Theorem}
\newtheorem{lem}[thm]{Lemma}
\newtheorem{prop}[thm]{Proposition}
\newtheorem{cor}[thm]{Corollary}
\newtheorem{ex}[thm]{Example}
\newtheorem{defn}[thm]{Definition}
\newtheorem{rem}[thm]{Remark}
\begin{document}

\title{Symmetric monoidal noncommutative spectra, strongly self-absorbing $C^*$-algebras, and bivariant homology}
\author{Snigdhayan Mahanta}
\email{snigdhayan.mahanta@mathematik.uni-regensburg.de}
\address{Fakult{\"a}t f{\"u}r Mathematik, Universit{\"a}t Regensburg, 93040 Regensburg, Germany.}
\subjclass[2010]{19Dxx, 46L85; 46L80, 55Nxx, 55P42}
\keywords{noncommutative spectra, stable $\infty$-categories, strongly self-absorbing $C^*$-algebras, connective $\E$-theory, noncommutative motives, $\TT$-duality}
\thanks{The third author was supported by the Deutsche Forschungsgemeinschaft (SFB 878 and SFB 1085), ERC through AdG 267079, and the Alexander von Humboldt Foundation (Humboldt Professorship of Michael Weiss).}

\begin{abstract}
Continuing our project on noncommutative (stable) homotopy we construct symmetric monoidal $\infty$-categorical models for separable $C^*$-algebras $\iCsep$ and noncommutative spectra $\iNSp$ using the framework of Higher Algebra due to Lurie. We study smashing (co)localizations of $\iCsep$ and $\iNSp$ with respect to strongly self-absorbing $C^*$-algebras. We analyse the homotopy categories of the localizations of $\iCsep$ and give universal characterizations thereof. We construct a stable $\infty$-categorical model for bivariant connective $\E$-theory and compute the connective $\E$-theory groups of $\Oinf$-stable $C^*$-algebras. We also introduce and study the nonconnective version of Quillen's nonunital $\K'$-theory in the framework of stable $\infty$-categories. This is done in order to promote our earlier result relating topological $\TT$-duality to noncommutative motives to the $\infty$-categorical setup. Finally, we carry out some computations in the case of stable and $\Oinf$-stable $C^*$-algebras.
\end{abstract}

\maketitle

\setcounter{tocdepth}{2}
\tableofcontents

\begin{center}
{\bf Introduction}
\end{center}

The triangulated noncommutative stable homotopy category $\NSH$ (see Remark \ref{terminology}) was constructed in \cite{ThomThesis} as the receptacle of the universal triangulated homology theory for separable $C^*$-algebras. Its underlying additive category already appeared in the seminal paper of Connes--Higson on bivariant $\E$-theory \cite{ConHig} (see also \cite{HigETh}). In \cite{MyNSH} we constructed a stable presentable $\infty$-category of noncommutative spectra $\iNSp$ and used it to prove that $\NSH$ is a topological triangulated category as defined by Schwede \cite{SchTopTri}. The stable $\infty$-category $\iNSp$ is an ideal framework for stable homotopy theory of (pointed) noncommutative spaces.  Nevertheless, a very important part of the homotopy theory package, viz., the symmetric monoidal structure was left out of the discussion in \cite{MyNSH}. This is a glaring omission as it lies at the heart of our goal of developing a state of the art homotopy theoretic package for operator algebras with new features (see Remark \ref{THH}). One of the objectives of this article is to prove (using the formalism of Higher Algebra due to Lurie \cite{LurHigAlg})

\begin{thmquote}[Theorem \ref{SMNSp}]
There is a closed symmetric monoidal and compactly generated stable $\infty$-category of noncommutative spectra $\iNSp$. 
\end{thmquote}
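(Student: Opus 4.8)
The strategy is to upgrade the construction of $\iNSp$ from \cite{MyNSH} to a symmetric monoidal one by producing the monoidal structure already at the level of separable $C^*$-algebras and then transporting it along the universal constructions. Concretely, I would proceed as follows. First, recall that $\iCsep$ carries a symmetric monoidal structure induced by the maximal (or, after the relevant localizations, the minimal) $C^*$-tensor product; the unit is $\CC$. The point is that this tensor product is compatible with the homotopy-theoretic manipulations used to build noncommutative spectra: it preserves the class of morphisms one inverts (Morita equivalences / stabilization by $\cpt$) and the relevant (co)fiber sequences, so it descends to the pointed, suspension-stable, and finally spectrum-level categories. Formally I would phrase this as: the localization functors and the stabilization $\Sigma^\infty$ used to define $\iNSp$ are all \emph{smashing} or at least monoidal Bousfield (co)localizations, and Lurie's machinery (\cite{LurHigAlg}, the sections on symmetric monoidal localizations and on stabilization of symmetric monoidal $\infty$-categories) then endows each stage, and in particular $\iNSp$, with a symmetric monoidal structure for which the structure functors are symmetric monoidal.

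The key steps, in order: (1) equip the nerve of separable $C^*$-algebras with a symmetric monoidal $\infty$-categorical structure via the $C^*$-tensor product, using that it is a symmetric monoidal structure on the ordinary category and invoking the relevant coherence (e.g. via a symmetric monoidal model or via $\mathrm{Fin}_*$-diagrams) to lift it to $\iCsep$; (2) check that the defining localizations producing the pointed and suspension-invariant versions are compatible with $\otimes$ in the sense of \cite{LurHigAlg} (the local objects form a tensor ideal, equivalently the tensor product of a local equivalence with any object is a local equivalence) so that these intermediate categories inherit closed symmetric monoidal structures; (3) apply Lurie's result that the stabilization of a presentably symmetric monoidal $\infty$-category is again presentably symmetric monoidal, and that $\Sigma^\infty$ is symmetric monoidal, to conclude that $\iNSp$ is presentably, hence closed, symmetric monoidal; (4) separately verify compact generation — this is inherited from the construction in \cite{MyNSH}, where $\iNSp$ is generated under colimits by the images of the separable $C^*$-algebras, which are compact, and one checks the tensor unit and more generally the tensor product of two compact generators is again compact (this uses that $\otimes$ preserves filtered colimits in each variable, which holds for a closed symmetric monoidal presentable structure, plus that the generators form a set closed under $\otimes$ up to the relations imposed); closedness of the monoidal structure is automatic from presentability by the adjoint functor theorem.

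The main obstacle I anticipate is step (1)–(2): making the symmetric monoidal structure \emph{coherent} at the $\infty$-categorical level and checking compatibility with the localizations. The $C^*$-tensor product is strictly symmetric monoidal on the $1$-category of separable $C^*$-algebras, so producing the $\infty$-categorical symmetric monoidal structure is not itself the issue; rather, the subtlety is that the passage to $\iNSp$ in \cite{MyNSH} goes through several Bousfield localizations and a stabilization, and one must verify at each stage that the subcategory of local objects is closed under tensoring with arbitrary objects (equivalently that the monoidal structure descends). For the stabilization step the relevant statement is exactly Lurie's theorem on symmetric monoidal structures on $\Sp(\cC)$; the genuinely hands-on part is checking the tensor-ideal condition for the $C^*$-algebraic localizations (exactness/matrix-stability), which should follow from the fact that the $C^*$-tensor product of a matrix-stable equivalence with any separable $C^*$-algebra remains matrix-stable, using associativity and commutativity of $\otimes$ together with $\cpt \otimes \cpt \cong \cpt$. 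Once these compatibilities are in place, closedness and compact generation are formal consequences, so the proof reduces to assembling the cited results of \cite{LurHigAlg} around this single compatibility check.
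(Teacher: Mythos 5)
Your overall plan is the same as the paper's: (i) put a symmetric monoidal structure on $\iCsep$ from $\prot$, pass to $\iNS=\Ind(\iCsep^\op)$; (ii) apply Lurie's theorem on symmetric monoidal structures on stabilizations to get $\Sp(\iNS)$; (iii) show the localization $L_S:\Sp(\iNS)\to\iNSp$ is a symmetric monoidal localization by checking a tensor-ideal condition; (iv) compact generation comes from $\iCsep^\op$ and closedness from presentability. So the decomposition is right.

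However, you have misidentified the localization that defines $\iNSp$, and hence the hands-on check at the heart of step (iii). The localization $S$ is \emph{not} matrix stability, Morita equivalence, or $\cpt$-stabilization; those are entirely separate (co)localizations discussed later in the paper (the functor $L_{\cpt}$ producing $\iCsep[\cpt^{-1}]$, and the set $X$ producing $\Ecn$). The set $S$ is generated by the maps $\St(\theta(f))$ where $\theta(f):\ker(f)\to\C(f)$ is the comparison map between the kernel and the mapping cone of a surjection $f$; inverting them enforces excision. Consequently the tensor-ideal check is \emph{not} that tensoring preserves matrix-stable equivalences via $\cpt\prot\cpt\cong\cpt$; it is that for any surjection $f$ and any $C\in\Csep$ one has $\theta(f)\otimes\id_C=\theta(f\otimes\id_C)$, i.e. that $\prot C$ sends a generator of $S_0$ to another generator. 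This is precisely where exactness of the \emph{maximal} $C^*$-tensor product (that it preserves kernels of surjections) is used. Note that this also rules out your parenthetical fallback to the minimal $C^*$-tensor product: the minimal tensor product is not exact in general, so the tensor-ideal condition for $S$ would genuinely fail there. The reduction to generators is legitimate because $\Sp(\iNS)$ is compactly generated by $\St(\iCsep^\op)$ and $\otimes$ preserves colimits in each variable, which is how the paper justifies checking only on $S_0$ (via Proposition 5.7 of \cite{BluGepTab} and Theorem 2.26 of \cite{MyNSH}). You should also record that already at the level of Proposition \ref{closedSM} one must verify $\otimes$ preserves finite colimits in $\iCsep^\op$, which again comes down to $\prot$ preserving homotopy pullbacks in $\Csep$; your plan treats this as automatic, but it is a $C^*$-algebraic input, not formal.

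Also, the localization $L_S$ is not smashing (there is no single object $E$ with $L_S\simeq -\otimes E$); it is an accessible Bousfield localization at a strongly saturated class that happens to be a tensor ideal. The smashing localizations in the paper appear later, for strongly self-absorbing $C^*$-algebras and $\cpt$.
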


The {\em strongly self-absorbing $C^*$-algebras} play a pivotal role in the Classification Program for $C^*$-algebras \cite{TomWin}. They are automatically simple and nuclear. Prominent examples of such $C^*$-algebras, that are also purely infinite, are Cuntz algebras $\mathcal{O}_2$, $\Oinf$, and tensor products of UHF algebras of infinite type with $\Oinf$. We are interested in strongly self-absorbing $C^*$-algebras because we construct {\em smashing localizations} of the $\infty$-category of separable $C^*$-algebras $\iCsep$ with respect to them. We describe the homotopy categories of the localized $\infty$-categories (see Proposition \ref{DStab}). At the level of homotopy categories we also characterize them by universal properties (see Theorem \ref{Univ}). The objects of $\iCsep^\op$ are compact in the $\infty$-category of pointed noncommutative spaces $\iNS$. Thus it is interesting to observe that nontrivial smashing (co)localizations exist already within the subcategory of $\iNS$ spanned by the (co)compact objects in contrast to the scenario of pointed spaces.

It was observed in \cite{MyNSH} that the homotopy category of noncommutative spectra $\h\iNSp$ is not an {\em algebraic} triangulated category and the question was raised whether it contains algebraic triangulated subcategories, which would facilitate computations enormously. With an eye towards such algebraization problems we colocalize the stable $\infty$-category $\iNSp$ with respect to the {\em suspension spectrum} (see Definition \ref{susSpec}) of any strongly self-absorbing $C^*$-algebra. Observe that colocalizations of $\iNSp$ can be viewed as $\infty$-subcategories spanned by certain colocal objects. The triangulated category of bivariant connective $\E$-theory for separable $C^*$-algebras, denoted by $\bu$, was constructed by Thom in \cite{ThomThesis} as a generalization of connective $\mathrm{kk}$-theory of (pointed compact metrizable) spaces \cite{SegKHom} (see also \cite{DadMcC}). In terms of computational complexity bivariant connective $\E$-theory lies in between noncommutative stable homotopy and bivariant (nonconnective) $\E$-theory. We construct a closed symmetric monoidal and compactly generated stable $\infty$-category $(\Ecn)^\op:=X'^{-1}\iNSp$ as an accessible localization of $\iNSp$ and show \begin{thmquote}[Theorem \ref{buTop}]
 There is a fully faithful exact functor $\bu\hookrightarrow\hEcn$ thereby showing that $\bu$ is a topological triangulated category.
\end{thmquote}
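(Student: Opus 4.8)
The strategy mirrors the proof in \cite{MyNSH} that $\NSH$ embeds into $\h\iNSp$: I would first produce an exact functor $F\colon\bu\to\hEcn$ from the universal property of bivariant connective $\E$-theory, and then verify that $F$ is fully faithful by reducing it to a comparison of morphism groups between separable $C^*$-algebras. Once this is done the topological conclusion is formal.

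For the construction of $F$, recall from \cite{ThomThesis} that $\bu$ is the receptacle of the universal homology theory $\kc\colon\Csep\to\bu$ for separable $C^*$-algebras which is homotopy invariant, $C^*$-stable, half-exact on semisplit extensions, and connective — this last property being exactly what distinguishes $\bu$ both from bivariant $\E$-theory and from $\NSH$. The plan is to consider the functor $\Csep\to\Ecn$ obtained by composing the inclusion of separable $C^*$-algebras into the $\infty$-category of pointed noncommutative spaces $\iNS$ (whose compact objects are, by construction, the opposites of $\iCsep$), the suspension spectrum functor $\Sigma^\infty\colon\iNS\to\iNSp$, and the accessible localization $L_{X'}\colon\iNSp\to(\Ecn)^{\op}$ defining $\Ecn$, and then to pass to homotopy categories, obtaining a functor $\Csep\to\hEcn$ of the same variance as $\kc$. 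It inherits homotopy invariance and $C^*$-stability from $\iNS$ and $\iNSp$ (see \cite{MyNSH}); it sends semisplit extensions to exact triangles, since such extensions become cofiber sequences in $\iNSp$ and $L_{X'}$, being a localization of stable $\infty$-categories, is exact; and it is connective because $L_{X'}$ is precisely designed to invert the class of maps whose inversion effects the passage from noncommutative stable homotopy to connective $\E$-theory. The universal property of $\bu$ then yields the desired exact factorization $\Csep\xrightarrow{\kc}\bu\xrightarrow{\,F\,}\hEcn$.

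To see that $F$ is fully faithful I would use that $\bu$ is generated as a triangulated category by the objects $\kc(A)$, $A\in\Csep$ \cite{ThomThesis}. Granting this, it suffices to show that $F$ induces isomorphisms $\bu(A,B)_*\xrightarrow{\sim}\hEcn(FA,FB)_*$ for all separable $A,B$: fixing $B$, the full subcategory of $\bu$ of objects $X$ with $\bu(X,\kc B)_*\xrightarrow{\sim}\hEcn(FX,FB)_*$ is thick by exactness of $F$ and the five lemma, hence equals $\bu$, and running the same argument in the second variable gives full faithfulness. The comparison of morphism groups is the crux. One would rewrite $\hEcn(FA,FB)_*$ as $\pi_*$ of the mapping spectrum $\Map_{(\Ecn)^{\op}}(L_{X'}\Sigma^\infty A,L_{X'}\Sigma^\infty B)$, use that $L_{X'}$ is a smashing localization together with the compactness of the generators $\Sigma^\infty A$ (or, equivalently, exploit the closed symmetric monoidal structure of $\Ecn$ and its internal hom) to identify this with $\pi_*L_{X'}\Map_{\iNSp}(\Sigma^\infty A,\Sigma^\infty B)$, and finally match the outcome with Thom's concrete description of the graded group $\bu(A,B)_*$ in terms of homotopy classes of asymptotic morphisms. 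I expect this last identification — checking that the effect of the $\infty$-categorical localization $L_{X'}$ on these mapping spectra is exactly the connectivization produced by Thom's construction, neither more nor less — to be the main technical obstacle.

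Finally, the essential image of $F$ is a full, triangle-closed subcategory of $\hEcn$, so $\bu$ is equivalent to a full triangulated subcategory of $\hEcn$. Since $\Ecn$ is a compactly generated, hence presentable, stable $\infty$-category, $\hEcn$ is the homotopy category of a stable model category and therefore a topological triangulated category in the sense of Schwede \cite{SchTopTri}; a full triangulated subcategory of such a category is again topological, whence so is $\bu$.
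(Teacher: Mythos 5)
Your proposal takes a genuinely different route from the paper, and while the overall outline (construct $F$ via a universal property, then verify full faithfulness) is reasonable, there is a real gap exactly where you flag one — and the paper's argument is designed precisely to avoid that gap.

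The paper's proof is structural. Both $\bu^\op$ and $\hEcn^\op$ arise as quotients by ``the same'' thing: $\bu^\op$ is the Verdier quotient $\NSH^\op/\ker(V)$, where $\ker(V)$ is the thick subcategory generated by $\{\mathrm{cone}(f)\,|\,f\in X\}$, and $\hEcn^\op$ is the Bousfield localization $L_{X'}\hNSp$ whose kernel is the localizing subcategory compactly generated by $\pi^\op(\ker(V))$. Since $\pi^\op:\NSH^\op\hookrightarrow\hNSp$ is fully faithful onto compact objects (Theorem 2.26 of \cite{MyNSH}), one gets a commuting square relating $V$ and $L_{X'}$, and then Krause's localization theory for compactly generated triangulated categories (Theorem 7.2.1(3) and Lemma 4.7.1 of \cite{KraLoc}) says at once that the induced dashed functor $\bu^\op\to\hEcn^\op$ is fully faithful. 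No morphism groups are computed; full faithfulness is inherited from the formal relationship between the two localizations.

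Your route, by contrast, proposes to establish full faithfulness by identifying $\bu(A,B)_*$ with the mapping groups $\hEcn(FA,FB)_*$ directly, via Thom's concrete asymptotic-morphism description on one side and mapping spectra in $\Ecn$ on the other, and then extending to all of $\bu$ by a thick-subcategory / five-lemma argument (which is fine as far as it goes). But the claimed identification on generators is essentially the content of the theorem — it is not obviously easier than the statement you are trying to prove, and you explicitly defer it as ``the main technical obstacle.'' There is also a technical inaccuracy in how you propose to reduce it: for a Bousfield localization $L$ with right adjoint the inclusion, the mapping spectrum in the localization is $\Map_{L\cC}(LX,LY)\simeq\Map_{\cC}(X,LY)$, not $L\,\Map_{\cC}(X,Y)$; smashingness and compactness of $\Sigma^\infty A$ do not give you the identification you write down without further work. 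Finally, a minor but real slip in your setup: $\bu$ is the Verdier quotient inverting the corner embeddings $M_2(A)\to A$, so it is matrix-stable, not $C^*$-stable — if it were $C^*$-stable you would have recovered (nonconnective) bivariant $\E$-theory rather than the connective version.

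Your deduction of topologicality from the fully faithful embedding into the homotopy category of a presentable stable $\infty$-category is correct and matches the paper's last step.
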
 
\noindent
We completely describe the subcategory of $\hEcn$ spanned by the suspension spectra of $C^*$-algebras after (co)localization with respect to a purely infinite strongly self-absorbing $C^*$-algebra satisfying UCT (cf. Theorems \ref{OColoc}, \ref{UHF}, and \ref{O2}). Although these results do not entirely settle the algebraization problem, they demonstrate that certain (co)localized subcategories of $\iNSp$ are amenable to computation as they reduce to familiar bivariant homology theories. A consequence of Theorem \ref{OColoc} is that the canonical map from connective $\E$-theory to topological $\K$-theory is an isomorphism for $\Oinf$-stable $C^*$-algebras (see Remarks \ref{ECtoE} and \ref{OComp}). It turns out that the noncommutative stable cohomotopy of any $\Oinf$-stable $C^*$-algebra contains its topological $\K$-theory as a summand (see Proposition \ref{Ksummand} for a more general result). Using these results we compute the connective $\E$-theory and identify a summand in the noncommutative stable cohomotopy groups of $ax+b$-semigroup $C^*$-algebras associated with number rings (see Theorem \ref{AX+B} and Remark \ref{ax+bsum}). The colocalization with respect to $\mathcal{Z}$, where $\mathcal{Z}$ is the Jiang--Su algebra, is the most interesting case from the viewpoint of the classification program and for some partial results in this direction see Section 4 of \cite{MyColoc}.

Algebraic $\K$-theory does not (directly) make sense for topological spaces. The appropriate theory in this context is Waldhausen's $\mathrm{A}$-theory \cite{Waldhausen}, which is homotopy invariant but not excisive. One needs the theory of functor calculus to analyse it. However, algebraic $\K$-theory does make sense for certain noncommutative spaces. One can view a compact noncommutative space or a unital $C^*$-algebra simply as a unital complex algebra and study its algebraic $\K$-theory. Although algebraic $\K$-theory satisfies excision on the category of $C^*$-algebras \cite{SusWod2}, it is not homotopy invariant. Roughly speaking, a spectrum valued functor $F$ on $k$-algebras satisfies excision, where $k$ is a field, if for every short exact sequence $0\map A\map B\map C\map 0$ the induced diagram $F(A)\map F(B)\map F(C)$ is a homotopy (co)fiber sequence. Thus one needs algebraic $\K$-theory to treat unital and nonunital algebras on an equal footing (note that $A$ is strictly nonunital unless the extension is trivial). Quillen introduced a $\K'_0$-theory for nonunital algebras in \cite{QuiNonunitalK0}, whose higher (connective) version was developed by the author in \cite{KQ}. The author's motivation in that article was the categorification of topological $\TT$-duality. The higher version of $\K'_0$-theory was called $\KQ$-theory by the author in \cite{KQ} so that a conflict with $\G$-theory (or $\K'$-theory of pseudo-coherent modules) could be avoided. In the final part of this article we define nonconnective $\KQ$-theory and show that for stable and $\Oinf$-stable $C^*$-algebras it agrees naturally with their nonconnective algebraic as well as topological $\K$-theory (see Theorem \ref{KQ} for a more general result). These results reinforce the idea that certain (co)homological invariants tend to become more tractable after specific (co)localizations. From the computational viewpoint the following picture emerges: \begin{thmquote}[Remark \ref{identical}]
 For stable and $\Oinf$-stable separable $C^*$-algebras the four possible invariants, viz., connective $\E$-theory, nonconnective $\KQ$-theory, nonconnective algebraic $\K$-theory, and topological $\K$-theory are all naturally isomorphic. 
\end{thmquote}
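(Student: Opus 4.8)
The statement is a synthesis of the two principal comparison results obtained above, so the plan is to chain them through topological $\K$-theory and to make precise the sense in which the bivariant theory $\bu$ is being viewed as a spectrum-valued invariant of a single algebra. Write $\mathcal{C}$ for the relevant class of separable $C^*$-algebras --- the stable ones, the $\Oinf$-stable ones, or (for the statement in its full strength) those that are both --- and read ``connective $\E$-theory of $A$'' as the graded group $\bu_*(\CC,A)$, equivalently the homotopy groups of the mapping spectrum $\Map_{\hEcn}(\CC,A)$ supplied by the embedding of Theorem \ref{buTop}.

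First I would record that, by Theorem \ref{OColoc} together with Remarks \ref{ECtoE} and \ref{OComp}, the canonical comparison $\bu_*(\CC,-)\to\K^{top}_*(-)$ is a natural isomorphism on the $\Oinf$-stable part of $\mathcal{C}$. Second I would apply Theorem \ref{KQ}, which on all of $\mathcal{C}$ identifies, via the canonical maps, nonconnective $\KQ$-theory with nonconnective algebraic $\K$-theory and the latter with topological $\K$-theory; the one nonformal input here, the equivalence $\K^{alg}\simeq\K^{top}$ for $\mathcal{K}$-stable $C^*$-algebras, is the Suslin--Wodzicki solution of Karoubi's conjecture, and it transports to the $\Oinf$-stable case after a further stabilization by $\mathcal{K}$, which none of the four invariants detects. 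Since each of the four functors is thereby naturally isomorphic to $\K^{top}$ on the appropriate subclass, transitivity of natural isomorphism gives that, on the intersection of the two subclasses, all four are naturally isomorphic to one another; on the stable class alone one still obtains the evident threefold isomorphism, and on the $\Oinf$-stable class alone the fourfold one.

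The only points demanding attention --- rather than a genuine obstacle --- are of a bookkeeping nature. One must reconcile the ambient frameworks, since connective $\E$-theory presents itself as the triangulated category $\bu$ (or the stable $\infty$-category $\Ecn$) whereas $\KQ$-theory, algebraic $\K$-theory and topological $\K$-theory are covariant spectrum-valued functors of one $C^*$-algebra; this is dealt with by the identification in the first paragraph and by observing that the comparison of Theorem \ref{OColoc}, being induced by a functor, is natural in $A$. One should also check that the hypotheses of Theorems \ref{OColoc} and \ref{KQ} are being applied to genuinely the same algebras (in particular that ``stable'' is understood as $\mathcal{K}$-stability), and, if one wants a single coherent diagram rather than merely pairwise isomorphisms, that the two routes into $\K^{top}$ --- from $\KQ$ and from $\bu_*(\CC,-)$ --- agree, which follows because each comparison map is the unique one compatible with the universal characterizations recorded in Theorem \ref{Univ} and in the constructions of $\bu$ and of $\KQ$. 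Beyond this, the content is entirely contained in Theorems \ref{OColoc} and \ref{KQ}.
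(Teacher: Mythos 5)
Your strategy (chain everything through $\K^{top}$) is the same one the paper uses, and the $\KQ^\nc\cong\K^\nc\cong\K^{top}$ part of your argument matches the paper's: Theorem \ref{KQ} for the first isomorphism and the Suslin--Wodzicki / Cort\'{i}\~{n}as--Phillips results for the second. Your ``stabilize by $\cpt$ and then transport'' argument for $\K^\nc\cong\K^{top}$ on the $\Oinf$-stable class is a legitimate alternative to the paper's direct citation of \cite{CorPhi,MyComparison} --- it works because proper infiniteness of $\Oinf$ lets you factor the corner embedding $A\to M_2(A)$ through $A\prot\cpt$, and all the invariants in play are matrix-stable.

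There is, however, a genuine gap coming from a misreading of the class of algebras. You interpret ``stable and $\Oinf$-stable'' as the intersection of the two classes (``for the statement in its full strength those that are both'') and concede that ``on the stable class alone one still obtains the evident threefold isomorphism,'' i.e.\ you drop connective $\E$-theory there. But the remark, consistently with the phrase ``$\K^\nc$-theory of a stable or an $\Oinf$-stable $C^*$-algebra'' that appears in it and with the Abstract, asserts the \emph{fourfold} isomorphism separately on each of the two classes. Your argument establishes connective $\E$-theory $\cong\K^{top}$ only on the $\Oinf$-stable class, via Theorem \ref{OColoc} and Remark \ref{OComp}, and proves nothing about $\bu(\CC,-)$ on a merely $\cpt$-stable algebra. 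The missing input is Thom's Lemma 4.2.4 (invoked in the proofs of Proposition \ref{Ksummand} and Theorem \ref{OColoc}): $\bu(C,D)\cong\E_0(C,D)$ naturally whenever $D$ is stable. Taking $C=\CC$ and $D$ stable gives $\bu(\CC,D)\cong\E_0(\CC,D)\cong\K^{top}_0(D)$, which supplies the connective-$\E$-theory vertex of the square on the stable class independently of $\Oinf$-stability. With that ingredient added, your chain closes on both classes, not just their intersection.
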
 At least the assertion in the $\Oinf$-stable case for all four invariants appears to be new (see also \cite{CorPhi,MyComparison}). The results in this part rely on various properties of algebraic $\K$-theory in the setting of stable $\infty$-categories established by Blumberg--Gepner--Tabuada \cite{BluGepTab}. Given any separable $C^*$-algebra $A$ we associate with it its ($\cpt$-stabilized) noncommutative motive $\cM_\infty^\cpt(A)$ that takes values in the stable presentable $\infty$-category of noncommutative motives $\mloc$ constructed in \cite{BluGepTab}. We then generalize our earlier result on categorification of topological $\TT$-duality \cite{KQ} to pointed noncommutative spaces in the setting of (stable) $\infty$-categories as follows (see also Remark \ref{categorification}): \begin{thmquote}[Theorem \ref{facKK}]
 The functor $(\cM_\infty^\cpt)^\op:\iCsep^\op\functor\mloc^\op$ induces the following two functors: \begin{enumerate}
                                                                                \item $\iNS\functor\mloc^\op$ that is continuous, and
                                                                                \item $\ikk\functor\mloc$ that is exact.
                                                                               \end{enumerate}
\end{thmquote}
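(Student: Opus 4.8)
The plan is to produce both functors by extension — along the embedding $\iCsep^\op\hookrightarrow\iNS$ in the first case, and along the canonical localization $\kk_\infty\colon\iCsep\to\ikk$ in the second — after first checking that the $\cpt$-stabilized noncommutative motive $\cM_\infty^\cpt$, $\cM_\infty^\cpt(A)=\uloc(\Perf(A\otimes\cpt))$, has the structural properties that make these extensions possible. Note for this that $\mloc$ is presentable, hence both complete and cocomplete, so $\mloc^\op$ admits all small colimits (even though it is generally not presentable).

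For (1): since $\iNS$ is (a presentable localization of) the $\Ind$-completion of $\iCsep^\op$, with the objects of $\iCsep^\op$ compact, the universal property of this cocompletion \cite{MyNSH} produces an essentially unique continuous functor $\iNS\to\mloc^\op$ extending $(\cM_\infty^\cpt)^\op$; it remains to observe that $(\cM_\infty^\cpt)^\op$ is pointed (clear, as $\cM_\infty^\cpt(0)\simeq 0$) and that it inverts whatever local equivalences cut $\iNS$ out of the $\Ind$-completion — equivalently, that $\cM_\infty^\cpt$ sends the defining Mayer--Vietoris/excision squares of separable $C^*$-algebras to (co)cartesian squares in $\mloc$ — which is the excision property established in the next step.

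For (2): I would check that $\cM_\infty^\cpt\colon\iCsep\to\mloc$ is (i) $C^*$-stable, (ii) excisive — in particular split-exact — and (iii) $C[0,1]$-homotopy invariant, and then invoke the universal characterization of $\ikk$ as the receptacle of the universal $C^*$-stable, homotopy invariant and split-exact functor from $\iCsep$ to a stable $\infty$-category (the $\infty$-categorical enhancement of the Higson--Cuntz characterization of $\KK$). Property (i) is immediate: a rank-one projection gives a corner embedding $A\otimes\cpt\hookrightarrow A\otimes\cpt\otimes\cpt$ which is a Morita equivalence because $\cpt\otimes\cpt\cong\cpt$, and every localizing invariant is Morita invariant. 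Property (ii) holds because for a short exact sequence $0\to I\to A\to A/I\to 0$ of $C^*$-algebras the ideal $I\otimes\cpt\subset A\otimes\cpt$ is $H$-unital, so $\Perf(I\otimes\cpt)\to\Perf(A\otimes\cpt)\to\Perf((A/I)\otimes\cpt)$ is an exact sequence of stable $\infty$-categories which $\uloc$ turns into a cofiber sequence (cf.\ \cite{SusWod2}), while finite direct sums are handled by additivity. Granting (i)--(iii), the universal property of $\ikk$ supplies an essentially unique $\Phi\colon\ikk\to\mloc$ with $\Phi\circ\kk_\infty\simeq\cM_\infty^\cpt$, and exactness of $\Phi$ is then formal: it preserves the zero object and finite coproducts by (i) and additivity, and the cofiber sequences of $\ikk$ are generated, under (de)suspension and finite colimits, by the $\kk_\infty$-images of short exact sequences of $C^*$-algebras, all of which $\Phi$ sends to cofiber sequences by (ii). (Equivalently: extend $\cM_\infty^\cpt$ first to an exact colimit-preserving functor on noncommutative spectra, observe by (i) and (iii) that it inverts the maps defining $\ikk$ as a localization, and inherit exactness.)

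The only genuinely hard input is property (iii). Homotopy invariance of $\cM_\infty^\cpt$ amounts to the vanishing $\uloc(\Perf(A\otimes C_0((0,1])\otimes\cpt))\simeq 0$ of the motive of the $\cpt$-stabilized cone, and — unlike $C^*$-stability — this is \emph{not} a formal Eilenberg swindle, because the Hilbert-module identification $\ell^2\cong\bigoplus_\infty\ell^2$ is not an algebraic one; it rests on the Suslin--Wodzicki--Corti\~nas--Thom circle of results on algebraic $\K$-theory, more generally on localizing invariants, of stable $C^*$-algebras, and is precisely the computation from \cite{KQ} that is being reused and promoted to the $\infty$-categorical setting. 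A secondary point needing care is the bookkeeping of variance: (1) extends the \emph{contravariant} motive to the spaces side $\iNS$, hence lands in $\mloc^\op$ and preserves (filtered) colimits, whereas (2) extends the \emph{covariant} motive through the covariant localization $\iCsep\to\ikk$, hence lands in $\mloc$ and preserves finite limits and colimits; one must confirm that the two induced functors preserve exactly the (co)limits asserted in the statement.
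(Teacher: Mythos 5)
Your sketch has the right overall flavor (extend along cocompletion, then check stability/excision/homotopy-invariance properties), but it misses the step that the paper spends most of its effort on, and it misstates what $\iNS$ is. First, $\iNS:=\Ind(\iCsep^\op)$ is the $\Ind$-completion on the nose, not a further localization of it, so for part (1) there is no excision-type condition to verify; the universal property of $\Ind$ already gives the continuous extension once you have an $\infty$-functor out of $\iCsep^\op$. The actual difficulty, which you treat as given, is producing that $\infty$-functor: $\cM_\infty$ is constructed only on the \emph{discrete} category $\N(\CAlg)$, while $\iCsep$ is the topological nerve. The paper bridges this gap via Lemma \ref{NS}, which identifies $\iNS$ with $\Ind$ of the Dwyer--Kan localization $(\iCsep^\delta)^\op[H^{-1}]$ at the homotopy equivalences, and then shows $(\cM_\infty^\cpt)^\op$ inverts $H$ using Higson's theorem: once $\cM^\cpt$ is split exact (from the localization sequences of Theorem \ref{mloc}) and $C^*$-stable (from matrix stability via Proposition 3.31 of \cite{CunMeyRos}), homotopy invariance comes for free. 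Your proposed route to homotopy invariance through a Suslin--Wodzicki-type vanishing of the stabilized cone is not wrong in spirit, but it is much harder than needed and sidesteps the cleaner abstract argument. Your verification of $C^*$-stability also has a circularity: the algebraic two-sided ideal generated by a rank-one projection in $\cpt$ is only the finite-rank operators, so the corner embedding $A\otimes\cpt\hookrightarrow A\otimes\cpt\otimes\cpt$ is not a Morita equivalence of rings, and whether it becomes one after applying $\hpf(-)$ is precisely the $C^*$-stability question you are trying to establish; the paper instead outsources this to the CunMeyRos result.

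For part (2) you invoke a universal characterization of $\ikk$ as the receptacle of the universal $C^*$-stable, homotopy-invariant, split-exact functor to stable $\infty$-categories. This is a plausible statement, but it is not established in this paper nor, as far as the references here indicate, in \cite{MyColoc} at the $\infty$-categorical level (what is proved there is the fully faithful exact embedding $\KKcat\hookrightarrow\h\ikk$, a statement about the homotopy category). The paper avoids any such universal property by a direct construction: it shows $\Theta:\cC[H^{-1}]\to\mloc^\op$ preserves finite colimits, passes to $\Sp^\Sigma(\cC[H^{-1}])$ to get an exact functor into the stable $\mloc^\op$, extends by $\Ind$ to $\Sp(\iNS)\to\mloc^\op$, and then simply restricts along the chain of fully faithful exact embeddings $\ikk^\op=\pNSp[\cpt^{-1}]\hookrightarrow\pNSp\hookrightarrow\Sp(\iNS)$. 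Your parenthetical alternative ("extend to noncommutative spectra, observe it inverts the maps defining $\ikk$ as a localization") is closer, but note that $\pNSp[\cpt^{-1}]$ is a smashing \emph{colocalization} of $\pNSp$, not a localization, so "inverting defining maps" is not how one passes to it; the restriction-along-embeddings argument is the correct mechanism and it also automatically handles exactness without any generation argument for cofiber sequences.
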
 

\noindent
Here $\ikk$ is a stable $\infty$-categorical incarnation of bivariant $\K$-theory that we constructed in \cite{MyColoc}. The bivariant $\E$-theory counterpart $\mathtt{E_\infty}$ of $\ikk$ can be constructed similarly (see Remark \ref{Einfty}) and one may replace $\ikk$ by $\mathtt{E_\infty}$ in the above result. Our article also demonstrates that noncommutative motives constitute a bivariant homology theory on the category of separable $C^*$-algebras under favourable circumstances (see Remark \ref{BivHomColoc}). 

\begin{rem} \label{THH}
The above result opens up the prospect of studying topological Hochschild (resp. cyclic) homomology of noncommutative (ring) spectra via noncommutative motives. In fact, one of our motivations behind the introduction of the symmetric monoidal structures on $\iNS$ and $\iNSp$ is the direct study of such theories that will be explored elsewhere. 
\end{rem} In this article we have decided to change some terminology (see Disambiguation \ref{warning}) used previously by the author in order to align ourselves with the conventions in topology. For the benefit of the reader we record them here:

\begin{itemize}
 \item $\NSH^\op =$ noncommutative stable homotopy category,
 \item $(\NSH^f)^\op =$ homotopy category of noncommutative finite spectra,
 \item $\NSH(\CC,A) =$ noncommutative stable cohomotopy of $A$,
 \item $\NSH(A,\CC) =$ noncommutative stable homotopy of $A$.
\end{itemize}

\medskip
\noindent
{\bf Notations and conventions:} Throughout this article $\prot$ will denote the maximal $C^*$-tensor product. All $C^*$-algebras are assumed to be separable unless otherwise stated. For any $\infty$-category $\cC$ we denote by $\h\cC$ its homotopy category. In the context of $\infty$-categories a functor (resp. limit or colimit) will implicitly mean an $\infty$-functor (resp. $\infty$-limit or $\infty$-colimit). There is a Yoneda embedding $j:\iCsep^\op\map\iNS$ and a separable $C^*$-algebra $A$ is viewed as a noncommutative space via $j(A)$. In the sequel for brevity we suppress $j$ from the notation. 

The stable $\infty$-category of noncommutative spectra $\iNSp$ is a localization (with respect to a set of maps $S$) of the stabilization of pointed noncommutative spaces $\iNS$. This localization is performed in order to achieve optimal excision property following \cite{ThomThesis}. However, there are other conceivable choices for the set of maps to localize (see the set of maps $S'\subset S$ in Remark 2.29 of \cite{MyNSH}). Strictly speaking, one should denote the category $\iNSp$ by $\iNSp[T^{-1}]$ (or something similar) to indicate the dependence on the localization with respect to a set of maps $T$. For brevity we have chosen the concise notation $\iNSp$ with the understanding that there is an implicit localization with respect to $T=S$ (other possibilities being $T= S',\emptyset$).

\medskip
\noindent
{\bf Acknowledgements:} The author would like to thank T. Nikolaus and W. Winter for helpful discussions. The author is also grateful to S. Barlak for his feedback. The author has benefited from the hospitality of Max Planck Institute for Mathematics, Bonn under various stages of development of this project.

\section{The symmetric monoidal version of noncommutative spectra}
Recall from \cite{MyNSH} that there is an $\infty$-category of pointed noncommutative spaces $\iNS$ as well as a stable $\infty$-category of noncommutative spectra $\iNSp$, which is obtained after a localization of the stabilization of the $\infty$-category $\iNS$. In this section we construct a closed symmetric monoidal structure on $\iNS$ (resp. $\iNSp$) generalizing the smash product of pointed finite CW complexes (resp. finite spectra). 

Let $\Fin_*$ denote the category, whose objects are pointed sets $\langle n\rangle =\{*,1,\cdots, n\}$ with $*$ being the basepoint and whose morphisms are pointed maps. Let $\N(\Fin_*)$ denote its nerve. A {\em symmetric monoidal $\infty$-category} $\cC^\otimes$ is a coCartesian fibration of simplicial sets $p:\cC^\otimes\map\N(\Fin_*)$ with the property: for each $n\geqslant 0$ there is an equivalence $\cC^\otimes_{\langle n\rangle}\simeq (\cC^\otimes_{\langle 1\rangle})^n$ induced by the maps $\{\rho^i:\langle n\rangle \map \langle 1\rangle\}_{1\leqslant i\leqslant n}$. One should regard $\cC:=\cC^\otimes_{\langle 1\rangle}$ as the $\infty$-category, which is symmetric monoidal. It is customary to work with the underlying symmetric monoidal category $\cC$, leaving out the rest of the structure as implicitly understood. A symmetric monoidal $\infty$-category can also be regarded as a commutative monoid object in $\catinf$, which is the $(\infty,1)$-category of (small) $\infty$-categories (see Definition 3.0.0.1 of \cite{LurToposBook}). Recall that a symmetric monoidal presentable $\infty$-category is said to be {\em closed} if the tensor product preserves colimits separately in each variable. For further details the readers may consult Chapters 2, 3, and 4 of \cite{LurHigAlg}.

\begin{prop} \label{closedSM}
 The $\infty$-categories $\iCsep$ and $\iNS := \Ind(\iCsep^\op)$ are symmetric monoidal. Moreover, the tensor product $\otimes:\iNS\times\iNS\functor\iNS$ preserves small colimits in each variable separately, i.e., $\iNS$ is closed symmetric monoidal and the Yoneda functor $j:\iCsep^\op\functor\iNS$ is symmetric monoidal.
\end{prop}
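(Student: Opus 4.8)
The plan is to read off the symmetric monoidal structure of $\iCsep$ from the maximal $C^*$-tensor product $\prot$ and then to transport it across the Ind-completion using the machinery of \cite{LurHigAlg}. First I would note that $\prot$ makes the ordinary category $\Csep$ of separable $C^*$-algebras into a symmetric monoidal category with unit $\CC$: tensor products of separable algebras are separable, and the associativity, symmetry and unit constraints are the classical ones. Recalling from \cite{MyNSH} that $\iCsep$ is obtained from $\Csep$ by inverting the homotopy equivalences, and that homotopies are detected by the algebra $C([0,1],A)\cong A\prot C([0,1])$ with $C([0,1])$ nuclear, the associativity and commutativity of $\prot$ show that $(-)\prot D$ sends homotopy equivalences to homotopy equivalences for every fixed $D$; equivalently, the class of homotopy equivalences is stable under $\prot$. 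By the theory of symmetric monoidal localizations (Section 4.1.3 of \cite{LurHigAlg}) the $\infty$-category $\iCsep$ then inherits a symmetric monoidal structure, with unit $\CC$, for which the localization functor $\Csep\functor\iCsep$ is symmetric monoidal; passing to opposite categories equips $\iCsep^\op$ with a symmetric monoidal structure as well.

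Next I would invoke the Ind-completion. Recall from \cite{MyNSH} that $\iCsep$ admits finite limits, so that $\cD:=\iCsep^\op$ is a small symmetric monoidal $\infty$-category admitting finite colimits. The results of Section 4.8.1 of \cite{LurHigAlg} on symmetric monoidal structures on Ind-categories then endow $\iNS=\Ind(\cD)$ with a symmetric monoidal structure, essentially uniquely characterized by the requirements that the Yoneda functor $j:\iCsep^\op\functor\iNS$ be symmetric monoidal and that $\otimes$ preserve filtered colimits separately in each variable. This already gives the first and third assertions. For closedness one must upgrade ``filtered colimits'' to ``all small colimits'': since a functor preserving both finite colimits and filtered colimits preserves all small colimits, and since the embedding $\cD\hookrightarrow\Ind(\cD)$ preserves finite colimits, it suffices to know that $A\prot(-):\iCsep^\op\functor\iCsep^\op$ preserves finite colimits, equivalently that $A\prot(-):\iCsep\functor\iCsep$ preserves finite limits. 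This is exactly the exactness of the maximal $C^*$-tensor product: $A\prot(-)$ preserves the zero algebra, finite direct sums, the path-algebra functor $C([0,1],-)$, and strict pullbacks along surjections (the last being a five-lemma consequence of $\prot$ sending short exact sequences to short exact sequences), hence it preserves the standard models of finite homotopy limits in $\Csep$, hence finite limits in $\iCsep$. As $\iNS$ is presentable, the resulting preservation of colimits in each variable is precisely the asserted closedness (and yields an internal hom via the adjoint functor theorem).

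The two formal inputs here --- symmetric monoidal localization and symmetric monoidal Ind-completion --- are entirely standard given \cite{LurHigAlg}, so the step I expect to require genuine care is the last one: that $A\prot(-)$ preserves finite homotopy limits. This is operator-algebraic content, not soft nonsense, and it is precisely here that the choice $\prot=\otimes_{\max}$ is essential: the minimal tensor product $A\otimes_{\min}(-)$ is exact if and only if $A$ is an exact $C^*$-algebra, and not every separable $C^*$-algebra is exact, so with the minimal product the tensor would fail to preserve finite colimits on $\iCsep^\op$ in general and $\iNS$ would not be closed. A secondary, purely bookkeeping, point will be to match the monoidal unit $\CC$ and the basepoint conventions of $\iNS$ with those of \cite{MyNSH}, which is harmless since $\CC$ is exactly the object required.
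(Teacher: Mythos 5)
Your proposal is correct in substance and reaches the same conclusion via the same high-level architecture (symmetric monoidal structure on $\Csep$ from $\prot$, promote to $\iCsep$, dualize to $\iCsep^\op$, Ind-complete, and verify that the key operator-algebraic hypothesis---preservation of finite homotopy limits by $A\prot(-)$---holds). The two places you diverge from the paper are both legitimate alternatives, so this is worth recording.

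First, to lift the symmetric monoidal structure from $\Csep$ to $\iCsep$, the paper does \emph{not} go through localization: it observes that $\prot$ is a continuous functor on the topological category $\Csep$ whose associativity, unit, and symmetry constraints are continuous natural transformations, so the topological nerve $\iCsep$ is directly a symmetric monoidal $\infty$-category. You instead present $\iCsep$ as the $\infty$-categorical localization of (the nerve of) the discrete category at homotopy equivalences and invoke symmetric monoidal localization. These two descriptions of $\iCsep$ agree, but that identification is itself a theorem (it appears in this paper as Lemma~\ref{NS}, via Proposition 3.17 of Barnea--Joachim--Mahanta); the topological-nerve route sidesteps needing it here, so it is the more economical choice. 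Second, for the claim that $\prot$ preserves homotopy pullbacks in $\Csep$, the paper cites Corollary 1.9 and Proposition 1.11 of Schochet and Remark 3.10 of Pedersen, whereas you reconstruct the proof from exactness of $\otimes_{\max}$, nuclearity of $C[0,1]$, and a five-lemma argument for strict pullbacks along surjections---essentially re-deriving what those references package up. Your observation that this step would fail with $\otimes_{\min}$ in the non-exact case is a good sanity check and identifies the genuine operator-algebraic content correctly. Finally, the paper collapses the passage to $\Ind(\iCsep^\op)$ and the closedness assertion into a single citation of Corollary 4.8.1.13 of \cite{LurHigAlg}; you instead split it into ``filtered colimits for free from Ind-completion'' plus ``finite colimits from the underlying small category,'' which is a valid unpacking of the same result. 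Note only that the finite-colimit preservation by $\prot$ on $\iCsep^\op$ is in fact part of the hypotheses of Lurie's Corollary 4.8.1.13, so in the paper's organization it is established \emph{before} the Ind-completion is taken, not as an afterthought for closedness; your presentation is logically sound but inverts the dependency compared with the citation.
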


\begin{proof}
 It is well-known that the topological category $\Csep$ is symmetric monoidal under the maximal $C^*$-tensor product $\prot$. One can verify that $\prot:\Csep\times\Csep\map\Csep$ is a continuous functor that is compatible with the associativity and unit constraints; moreover, the symmetry maps $\eta_{A,B}:A\prot B\map B\prot A$ for all $A,B\in\Csep$ constitute a natural transformation of continuous functors. Hence its topological nerve $\iCsep$ is a symmetric monoidal $\infty$-category. The symmetric monoidal structure on $\iCsep$ endows $\iCsep^\op$ with a symmetric monoidal structure $\otimes$ that is uniquely defined up to a contractible space of choices (see Remark 2.4.2.7 of \cite{LurHigAlg}). One needs to verify that $\otimes$ commutes with finite colimits in $\iCsep^\op$. For this note that the maximal $C^*$-tensor product $\prot$ preserves homotopy pullbacks in the topological category $\Csep$ (see Corollary 1.9, Proposition 1.11 of \cite{SchTopMet3} and Remark 3.10 of \cite{PedCW}). Now all other assertions follow from Corollary 4.8.1.13 of \cite{LurHigAlg}. 
\end{proof}

\noindent
Note that the $\infty$-category $\iNS$ is pointed and it follows from Proposition 4.8.2.11 of \cite{LurHigAlg} that there is an equivalence $\iNS\simeq\iNS\otimes\S_*:=\Fun^\mathrm{R}(\iNS^\op,\Sp)$. Here $\otimes$ is the one in the symmetric monoidal $\infty$-category of presentable $\infty$-categories with colimit preserving functors.

\begin{lem} \label{coSM}
 The stabilization $\Sp(\iNS)$ of $\iNS$ is a closed symmetric monoidal stable presentable $\infty$-category and the functor $\Sigma^\infty:\iNS\functor\Sp(\iNS)$ is symmetric monoidal.
\end{lem}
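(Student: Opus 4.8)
The plan is to derive Lemma \ref{coSM} from Proposition \ref{closedSM} by invoking Lurie's machinery for endowing stabilizations with symmetric monoidal structures. The key input is that $\iNS$ is a presentable symmetric monoidal $\infty$-category in which the tensor product preserves colimits separately in each variable (Proposition \ref{closedSM}), so $\iNS$ is an object of $\Prl$ equipped with a commutative algebra structure coming from the symmetric monoidal structure on $\Prl$ itself (Proposition 4.8.1.15 of \cite{LurHigAlg}). Since $\iNS$ is pointed, by Corollary 4.8.2.19 (and the discussion around Proposition 4.8.2.18) of \cite{LurHigAlg} the stabilization functor $\Sp(-) \simeq \Sp \otimes (-)$ is given by tensoring with the $\infty$-category of spectra $\Sp$ in $\Prl$, and tensoring with a fixed presentable $\infty$-category is a symmetric monoidal functor on $\Prl$. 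Therefore $\Sp(\iNS) = \Sp \otimes \iNS$ inherits a canonical presentable symmetric monoidal structure, and the unit map $\S_* \functor \Sp$ in $\Prl$ induces a symmetric monoidal functor $\iNS \simeq \S_* \otimes \iNS \functor \Sp \otimes \iNS \simeq \Sp(\iNS)$, which one identifies with $\Sigma^\infty$ (again Proposition 4.8.2.11 and the remark following Proposition \ref{closedSM} in this excerpt).

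First I would record that $\iNS$, being $\Ind(\iCsep^\op)$ with a closed symmetric monoidal structure whose tensor commutes with small colimits in each variable, lifts to a commutative algebra object of $(\Prl, \otimes)$; this is exactly the content of the identification of closed symmetric monoidal presentable $\infty$-categories with $\mathrm{CAlg}(\Prl)$. Next I would invoke that $\Sp \in \mathrm{CAlg}(\Prl)$ is idempotent (its unit $\S \functor \Sp$ exhibits it as the free pointed-to-stable object), so that for any pointed presentable $\cC$ the canonical map $\cC \functor \Sp \otimes \cC$ is the stabilization $\Sigma^\infty$; this is Lurie's description of $\Sp(\cC)$ when $\cC$ is presentable and pointed. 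Then, because $\Sp \otimes (-) : \Prl \functor \Prl$ is a symmetric monoidal localization functor, it carries the commutative algebra object $\iNS$ to the commutative algebra object $\Sp \otimes \iNS = \Sp(\iNS)$, and carries the unit map of $\iNS$-modules to a map of commutative algebras, i.e. $\Sigma^\infty$ is symmetric monoidal. Closedness of the tensor product on $\Sp(\iNS)$ is automatic: any commutative algebra object of $\Prl$ is by definition a closed symmetric monoidal presentable $\infty$-category (colimits are preserved in each variable), and stability of $\Sp(\iNS)$ holds because it is a stabilization.

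The main obstacle, such as it is, is purely bookkeeping: one must make sure the symmetric monoidal structure produced abstractly by the $\Prl$-tensor formalism genuinely restricts to $\iNS$ the structure of Proposition \ref{closedSM} and that the resulting functor $\iNS \functor \Sp(\iNS)$ really is the stabilization functor $\Sigma^\infty$ and not merely some colimit-preserving symmetric monoidal functor. This is handled by the universal property: $\Sp \otimes (-)$ is a localization of $\Prl$ onto the pointed stable presentable $\infty$-categories, and localizations of symmetric monoidal $\infty$-categories at a class of morphisms compatible with the tensor product are themselves symmetric monoidal with a symmetric monoidal localization functor (Proposition 4.1.7.4 and Example 4.1.7.2 of \cite{LurHigAlg}, applied internally to $\Prl$). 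Since on underlying $\infty$-categories this localization sends $\iNS$ to $\Sp(\iNS)$ and the localization map to $\Sigma^\infty$, uniqueness of symmetric monoidal refinements (up to contractible choice, Remark 2.4.2.7 of \cite{LurHigAlg}) forces the identification. I would close by remarking that, alternatively, one could cite Corollary 4.8.2.19 of \cite{LurHigAlg} essentially verbatim, since it asserts precisely that for a presentable symmetric monoidal $\infty$-category $\cC$ the stabilization $\Sp(\cC)$ is again presentable symmetric monoidal with $\Sigma^\infty$ symmetric monoidal, and closedness is built into presentability of the symmetric monoidal structure.
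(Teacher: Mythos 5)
Your proof is correct and follows essentially the same route as the paper, which likewise reduces to Proposition~\ref{closedSM} plus Lurie's Proposition 4.8.2.18 (you use the equivalent Corollary 4.8.2.19) together with the identification $\Sp(\iNS)\simeq\iNS\otimes\Sp$ in $\Prl$ and the factorization $\iNS\simeq\iNS\otimes\S_*\to\iNS\otimes\Sp\simeq\Sp(\iNS)$ coming from $\S_*\to\Sp$. The paper states this more tersely (and also points to Theorem 5.1 of Gepner--Groth--Nikolaus), but the substance of the argument is the same as yours.
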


\begin{proof}
The assertion is a consequence of the Proposition \ref{closedSM} and Proposition 4.8.2.18 of \cite{LurHigAlg} (see also Theorem 5.1 of \cite{GepGroNik}). A more step-by-step approach is to argue via the identification of stable $\infty$-categories $\Sp(\iNS)\simeq\iNS\otimes\Sp$ (see Example 4.8.1.22 of \cite{LurHigAlg}). Using the stabilization $\Sigma^\infty:\S_*\functor\Sp$ of pointed spaces, the stabilization of pointed noncommutative spaces can be regarded as the composite $$\iNS\simeq\iNS\otimes\S_*\overset{}{\map}\iNS\otimes\Sp\simeq\Sp(\iNS).$$\end{proof}

\noindent
For any $*$-homomorphism $f:B\map C$ in $\Csep$ there is a canonical map $\theta(f):\ker(f)\map\C(f)$ in $\iCsep$, where $\C(f)$ denotes the mapping cone of $f$. The map $\theta(f)$ can also be viewed as an element in $\iCsep^\op(\C(f),\ker(f))$. Let $$T_0=\{\C(f)\map\ker(f) \,|\, f: A\map B \text{ surjective in $\Csep$} \}$$ denote a small set of morphisms in $\iCsep^\op$. Recall from \cite{MyNSH} that there is a functor $\St:\iCsep^\op\functor\Sp(\iNS)$ that arises as a composition of two symmetric monoidal functors $$\iCsep^\op\overset{j}{\functor}\iNS\overset{\Sigma^\infty}{\functor}\Sp(\iNS),$$ such that its image lies inside the compact objects of $\Sp(\iNS)$. This functor descends to a symmetric monoidal functor between tensor triangulated categories $\St:\hosc[\Sigma^{-1}]\functor\h\Sp(\iNS)$, where $\hosc[\Sigma^{-1}]$ is the Spanier--Whitehead category of the homotopy category of $\Csep$ with respect to $\Sigma(-)$. We construct a {\em strongly saturated collection} of morphisms $S$ (see Definition 5.5.4.5 of \cite{LurToposBook}) in $\Sp(\iNS)$, which is of small generation starting from the small set $$S_0=\{\St(\theta)\,|\, \theta\in T_0\}$$ that is compatible with the triangulation as follows: Let $\cA$ denote the stable $\infty$-subcategory of $\Sp(\iNS)$ generated by the set $\{\mathrm{cone}(\phi)\,|\, \phi\in S_0\}$. Then $\Ind_\omega(\cA)$ is a stable presentable $\infty$-subcategory of $\Sp(\iNS)$ (see Proposition 1.1.3.6 of \cite{LurHigAlg}). Let $S$ denote the class of maps in $\Sp(\iNS)$, whose cones lie in the essential image of $\Ind_\omega(\cA)$. We deduce from Proposition 5.6 of \cite{BluGepTab} that $S$ is a strongly saturated collection of small generation. The cofiber of the map $\Ind_\omega(\cA)\map\Sp(\iNS)$ can be identified with the accessible localization $L_S:\Sp(\iNS)\functor S^{-1}\Sp(\iNS)$ (see Proposition 5.6 of \cite{BluGepTab}). Note that in \cite{MyNSH} we defined the stable $\infty$-category of {\em noncommutative spectra} as $\iNSp= S^{-1}\Sp(\iNS)$ and denoted the localization functor $L_S:\Sp(\iNS)\functor S^{-1}\Sp(\iNS)$ simply by $L$.

\begin{thm} \label{SMNSp}
 There is a colimit preserving symmetric monoidal functor $\Sigmat= L_S\circ\Sigma^\infty:\iNS\functor\iNSp$ between presentable closed symmetric monoidal $\infty$-categories.
\end{thm}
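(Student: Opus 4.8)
The plan is to obtain $\iNSp$ as a symmetric monoidal accessible localization of the symmetric monoidal stable $\infty$-category $\Sp(\iNS)$, and then to observe that $\Sigmat$ is the composite of the symmetric monoidal functors $\Sigma^\infty$ and $L_S$. The essential input is Lemma \ref{coSM}, which already gives that $\Sp(\iNS)$ is closed symmetric monoidal and that $\Sigma^\infty\colon\iNS\functor\Sp(\iNS)$ is symmetric monoidal and colimit preserving. So the real content is to upgrade the localization $L_S\colon\Sp(\iNS)\functor S^{-1}\Sp(\iNS)=\iNSp$ to a symmetric monoidal localization, i.e.\ to show that the strongly saturated collection $S$ is \emph{compatible with the symmetric monoidal structure} in the sense of Definition 2.2.1.6 of \cite{LurHigAlg}: for every $s\in S$ and every object $Z\in\Sp(\iNS)$, the morphism $s\otimes\id_Z$ again lies in $S$.

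First I would recall that $S$ is precisely the class of maps whose cofiber lies in the essential image of $\Ind_\omega(\cA)$, where $\cA\subseteq\Sp(\iNS)$ is the stable $\infty$-subcategory generated by $\{\mathrm{cone}(\phi)\mid\phi\in S_0\}$ and $S_0=\{\St(\theta)\mid\theta\in T_0\}$. Since $\otimes$ is exact in each variable (it preserves colimits, hence cofiber sequences) on the stable $\infty$-category $\Sp(\iNS)$, checking $s\otimes\id_Z\in S$ is equivalent to checking that $\mathrm{cone}(s)\otimes Z$ lies in the localizing subcategory $\Ind_\omega(\cA)$, and $\mathrm{cone}(s)$ ranges over $\cA$ (up to the relevant closure). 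So it suffices to show that $\Ind_\omega(\cA)$ is a \emph{tensor ideal}: it is stable under $-\otimes Z$ for arbitrary $Z\in\Sp(\iNS)$. By continuity of $\otimes$ in each variable and the fact that $\Sp(\iNS)$ is compactly generated by (shifts of) objects in the image of $\St$, it is enough to verify this on generators, i.e.\ to show that $\mathrm{cone}(\phi)\otimes\St(A)$ lies in $\cA$ for every $\phi\in S_0$ and every separable $C^*$-algebra $A$. Here I would use that $\St$ (equivalently $\Sigma^\infty\circ j$) is symmetric monoidal, so $\mathrm{cone}(\St(\theta(f)))\otimes\St(A)\simeq\mathrm{cone}(\St(\theta(f))\otimes\St(A))\simeq\mathrm{cone}(\St(\theta(f)\otimes\id_A))$, and $\theta(f)\otimes\id_A$ is again (up to the identifications $\ker(f)\prot A\simeq\ker(f\prot\id_A)$ and $\C(f)\prot A\simeq\C(f\prot\id_A)$, which hold because $\prot$ is exact and preserves mapping cones) the canonical map $\theta(f\prot\id_A)$ attached to the surjection $f\prot\id_A\colon B\prot A\twoheadrightarrow C\prot A$ in $\Csep$. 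Hence it already belongs to $S_0$, so its cone lies in $\cA$ by definition. This closes the argument that $S$ is compatible with $\otimes$.

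Granting that, Proposition 2.2.1.9 of \cite{LurHigAlg} endows $S^{-1}\Sp(\iNS)=\iNSp$ with a symmetric monoidal structure for which $L_S$ is symmetric monoidal; and Proposition 4.1.7.4 of \cite{LurHigAlg} (localizations of presentable symmetric monoidal categories are again presentable symmetric monoidal, and closed because the localized tensor product is computed as $L_S$ applied to the ambient one, which preserves colimits, and $L_S$ is a left adjoint) shows $\iNSp$ is closed symmetric monoidal and presentable. The composite $\Sigmat=L_S\circ\Sigma^\infty$ is then symmetric monoidal as a composite of symmetric monoidal functors, and colimit preserving as a composite of left adjoints (equivalently, colimit preserving functors between presentable $\infty$-categories). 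Compact generation of $\iNSp$ was already established in \cite{MyNSH} and is inherited from the compact generation of $\Sp(\iNS)$ together with the fact that $S$ is of small generation with compact generating cones, so nothing new is needed there.

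The main obstacle I expect is the bookkeeping in the last step of the second paragraph: verifying carefully that tensoring the canonical ``cone comparison'' map $\theta(f)\colon\ker(f)\map\C(f)$ with a fixed $A$ (inside $\iCsep^\op$, i.e.\ applying $-\prot A$ in $\Csep$) reproduces $\theta(f\prot\id_A)$ up to coherent equivalence. This relies on the stability results already quoted in the proof of Proposition \ref{closedSM} — that $\prot$ preserves homotopy pullbacks / mapping cones in the topological category $\Csep$ (Corollary 1.9 and Proposition 1.11 of \cite{SchTopMet3}, Remark 3.10 of \cite{PedCW}) — combined with exactness of $-\otimes\St(A)$ on $\Sp(\iNS)$; assembling these into a genuine equivalence of arrows rather than merely of source and target objects is the only place where care is required. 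Everything else is a formal consequence of Lurie's machinery for symmetric monoidal localizations.
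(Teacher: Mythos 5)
Your proposal is correct and follows essentially the same route as the paper's proof. The paper likewise reduces, via the generating set $S_0$ and compact generation of $\Sp(\iNS)$ (through Proposition 5.7 of \cite{BluGepTab}), to the single computation that $\theta(f)\otimes\id_C$ coincides with $\theta(f\otimes\id_C)$, and justifies this using exactness of $\prot$ (citing Lemma 4.1 of \cite{GueHigTro}) before invoking Proposition 2.2.1.9 of \cite{LurHigAlg} to conclude that $L_S$ is symmetric monoidal; your phrasing of this reduction in terms of $\Ind_\omega(\cA)$ being a tensor ideal is just a mild repackaging of the same argument.
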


\begin{proof}
We only need to show that the localization $L_S$ is symmetric monoidal, i.e., whenever $X\map Y$ is a $L_S$-equivalence, then so is $X\otimes Z\map Y\otimes Z$ for every $Z\in\Sp(\iNS)$ (see Proposition 2.2.1.9 and Example 2.2.1.7 of \cite{LurHigAlg}). By construction $\iNS$ is compactly generated by the objects of $\iCsep^\op$. Consequently $\St(\iCsep^\op)$ lands inside the $\infty$-subcategory of compact objects of $\Sp(\iNS)$. It follows from Proposition 5.7 of \cite{BluGepTab} that there is an equivalence of stable $\infty$-categories $S_0^{-1}\Sp(\iNS)\simeq S^{-1}\Sp(\iNS)=\iNSp$. The tensor product $\otimes$ on $\Sp(\iNS)$ preserves colimits separately in each variable (see, for instance, Theorem 5.1 of \cite{GepGroNik}) and $\Sp(\iNS)$ is compactly generated by $\St(\hosc[\Sigma^{-1}]^\op)$. By Theorem 2.26 of \cite{MyNSH} it suffices to show: if $\theta(f):\ker(f)\map\C(f)$ is the canonical map in $\iCsep$ for any surjection $f:A\map B$ in $\Csep$, then for any $C\in\Csep$ the map $\theta(f)\otimes \id_C: \ker(f)\prot C\map\C(f)\prot C$ is the same as $\theta(f\otimes\id_C):\ker(f\otimes\id_C)\map\C(f\otimes\id_C)$. This can be verified using the exactness of the maximal $C^*$-tensor product (see Lemma 4.1 of \cite{GueHigTro}).
\end{proof}

\begin{defn} \label{susSpec}
 We call $\Sigmat:\iNS\functor\iNSp$ the {\em suspension spectrum} of pointed noncommutative spaces. The composite functor $\Sigmat\circ j:\iCsep^\op\functor\iNSp$ was denoted by $\st$ in \cite{MyNSH}.
\end{defn}

\begin{rem}\label{terminology}
Since $\h\iCsep^\op$ is the homotopy category of pointed noncommutative (compact metrizable) spaces, it seems very natural to consider $\NSH^\op$ as its suspension stabilization. Thus we propose to (re)define $$\NSH^\op = \text{ noncommutative stable homotopy category,}$$ deviating from the terminology in \cite{ThomThesis,MyNSH,MyNGH}. Naturally we refer to its triangulated subcategory $(\NSH^f)^\op$ as the homotopy category of noncommutative finite spectra; see Definition 2.1 of \cite{MyNGH}, where $\NSH^f$ was called the homotopy category of noncommutative finite spectra.
\end{rem}

\begin{rem}
The category $\NSH$ is a tensor triangulated category with respect to $\prot$ (see \cite{ThomThesis,Ambrogio}. The homotopy category of noncommutative spectra $\hNSp$ is also a tensor triangulated category, containing $\NSH^\op$ as a full tensor triangulated subcategory via the functor $\pi^\op:\NSH^\op\functor\h\iNSp$ (see Theorem 2.26 of \cite{MyNSH}). It also contains $(\NSH^f)^\op$, defined in \cite{MyNGH}, as a full tensor triangulated subcategory.
\end{rem}

\begin{rem}
 Inside $\iNS$ it is possible to identify an $\infty$-subcategory where {\em surjections in $\Csep$ behave like cofibrations}. Let $T$ be the strongly saturated collection generated by $j(T_0)$ inside $\iNS$. We may construct an accessible localization $L_T:\iNS\functor T^{-1}\iNS$ with respect to $T$. The presentable $\infty$-category $T^{-1}\iNS$ has the desired property and the suspension spectrum functor $\Sigmat:\iNS\functor\iNSp$ factors through $T^{-1}\iNS$.
\end{rem}

\section{Strongly self-absorbing $C^*$-algebras and localizations of $\iCsep$}
A separable unital $C^*$-algebra $\cD$ ($\cD\neq\CC$) is called {\em strongly self-absorbing} if the there is an isomorphism $\phi:\cD\map\cD\prot\cD$ that is approximately unitarily equivalent to $\id_\cD\otimes \one_\cD$ \cite{TomWin}. In \cite{TomWin} the authors introduced and conducted an elaborate study of strongly self-absorbing $C^*$-algebras mainly with applications to the Elliott's Classification Program in mind. These $C^*$-algebras share certain properties similiar to $\cpt$, i.e., the $C^*$-algebra of compact operators on a separable Hilbert space. We are going to use these $C^*$-algebras to construct interesting (co)localizations of noncommutative spaces and spectra.

\begin{rem} \label{SSA}
 In \cite{DadWin} the authors showed the for any strongly self-absorbing $C^*$-algebra $\cD$ the map $\id_\cD\otimes\one_\cD$ is homotopic to an isomorphism $\phi:\cD\map\cD\prot\cD$. In \cite{DadWin} the result was asserted under the $\K_1$-injectivity condition, which later turned out to be redundant (see Remark 3.3. of \cite{WinZStable}).
\end{rem}

Let $\cC$ be a symmetric monoidal $\infty$-category with unit object $\one$. Then a map $e:\one\map E$ exhibits $E$ as an {\em idempotent object} if $\id_E\otimes e: E\simeq E\otimes\one\map E\otimes E$ is an equivalence in $\cC$ (see, for instance, Definition 4.8.2.1 of \cite{LurHigAlg}). Unlike finite CW complexes it is possible to find several interesting idempotent objects in the world of separable $C^*$-algebras, which are compact objects in $\iNS$. Indeed, we find

\begin{lem} \label{SSAidem}
 Any strongly self-absorbing $C^*$-algebra $\cD$ is an idempotent object in $\iCsep$. The same assertion holds for $\cpt$.
\end{lem}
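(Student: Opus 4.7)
My plan is to unwind Lurie's definition of an idempotent object (\cite[4.8.2.1]{LurHigAlg}): I need to exhibit a morphism $e\colon\CC\to E$ in $\iCsep$ (with $\CC$ the $\prot$-unit) such that $\id_E\prot e\colon E\simeq E\prot\CC\to E\prot E$ is an equivalence in $\iCsep$. Because $\iCsep$ is by construction the topological nerve of the topological category $\Csep$, an equivalence of $1$-morphisms there is exactly a homotopy equivalence of $*$-homomorphisms, and any norm-continuous homotopy between $*$-homomorphisms furnishes a $1$-simplex identifying its endpoints in $\h\iCsep$. So it suffices in each case to find $e$ for which $\id_E\prot e$ is homotopic in $\Csep$ to an isomorphism.

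For a strongly self-absorbing $\cD$, I would take $e=\one_\cD\colon\CC\to\cD$, $\lambda\mapsto\lambda\cdot 1_\cD$. The map in question is then $\id_\cD\prot\one_\cD\colon\cD\to\cD\prot\cD$, and Remark \ref{SSA} (the Dadarlat--Winter theorem) asserts precisely that this map is homotopic, through a norm-continuous path of $*$-homomorphisms, to an isomorphism $\phi\colon\cD\xrightarrow{\cong}\cD\prot\cD$. Transporting this homotopy into $\iCsep$ yields the desired equivalence immediately from the topological-nerve description.

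For $\cpt=\cpt(\mathcal{H})$ I would pick a rank-one projection $p\in\cpt$ and let $e\colon\CC\to\cpt$ be $\lambda\mapsto\lambda p$. Post-composing $\id_\cpt\prot e$ with any $*$-isomorphism $\psi\colon\cpt\prot\cpt\xrightarrow{\cong}\cpt$ (induced by a choice of unitary $\mathcal{H}\otimes\mathcal{H}\cong\mathcal{H}$) produces an endomorphism of $\cpt$ of the form $\Ad(V)$ for a concrete isometry $V\colon\mathcal{H}\to\mathcal{H}$. By the classical stability properties of $\cpt$, corner embeddings of this shape are homotopy equivalences in $\Csep$, so $\id_\cpt\prot e$ is homotopic to the isomorphism $\psi^{-1}$, and the argument concludes exactly as in the $\cD$ case.

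The genuine mathematical content sits in the two inputs: Dadarlat--Winter for $\cD$ (which upgrades the approximate unitary equivalence from the definition of strongly self-absorbing to a genuine homotopy), and the classical homotopy equivalence of stabilization maps for $\cpt$. I expect the $\cpt$ case to be the marginally more delicate one, because the required stability statement has to be invoked as a separate classical fact rather than being built into the definition of the object; but it is nevertheless standard. Everything else is formal bookkeeping reflecting that equivalence in $\iCsep$ coincides with homotopy equivalence in $\Csep$.
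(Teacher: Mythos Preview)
Your proof is correct and follows exactly the paper's approach: for $\cD$ the exhibiting map is the unit $\one_\cD\colon\CC\to\cD$ together with Remark~\ref{SSA}, and for $\cpt$ it is $1\mapsto e_{11}$. The paper's own proof is a two-line assertion of these choices without further justification, so your additional unpacking of the $\cpt$ case (reducing to $\Ad(V)$ for an isometry and invoking the classical homotopy equivalence) supplies detail the paper leaves to the reader.
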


\begin{proof}
 For a strongly self-absorbing $C^*$-algebra $\cD$ the canonical unital $*$-homomorphism $\CC\map\cD$ exhibits it as an idempotent object in $\iCsep$ (see Remark \ref{SSA}). For $\cpt$ the map $\CC\map\cpt$ sending $1\mapsto e_{11}$ exhibits $\cpt$ as an idempotent object in $\iCsep$.
\end{proof}

\begin{rem}
If $E\in\cC$ is an idempotent object, then $L_E:\cC\map\cC$ of the form $L_E(X)=-\otimes E$ is a localization. In \cite{GepGroNik} the authors called localizations $L_E:\cC\map\cC$ of the form $L_E(X)=-\otimes E$ for some $E\in\cC$ {\em smashing localizations} in keeping with the terminology prevalent in stable homotopy theory. Any smashing localization $L_E:\cC\map\cC$ is compatible with the symmetric monoidal structure on $\cC$ and, in fact, $L_E\cC$ inherits a symmetric monoidal structure from $\cC$, such that $L_E:\cC\map L_E\cC$ becomes symmetric monoidal (see Proposition 2.2.1.9 and Proposition 4.8.2.7 of \cite{LurHigAlg}). By abuse of notation we are sometimes going to drop the object $E$ from the smashing localization $L_E$ and denote it simply by $L$.
\end{rem}

\begin{ex}
Smashing localizations of the $\infty$-category of separable $C^*$-algebras $\iCsep$ produces interesting results. By definition $\iCsep$ is opposite to the $\infty$-category of pointed noncommutative compact Hausdorff spaces. We present a few pertinent examples here.

\begin{enumerate} \label{ConE}
\item If $L(A)=A\otimes\cpt$, then we denote the smashing localization $L\iCsep$ by $\iCsep[\cpt^{-1}]$. It is the $\infty$-category of {\em $C^*$-stable $C^*$-algebras}. For finite pointed CW complexes $(X,x)$ and $(Y,y)$ the homotopy set $\h\iCsep[\cpt^{-1}](L(\C(X,x)),L(\C(Y,y)))$ is the bivariant connective $\E$-theory group denoted by $\kk((Y,y),(X,x))$ in \cite{DadMcC} (see Remark \ref{HtpyIsom} below).

\item If $L(A)=A\otimes \cD$, where $\cD$ is a strongly self-absorbing $C^*$-algebra, then we denote the smashing localization $L \iCsep$ by $\iCsep[\cD^{-1}]$. We refer to it as the $\infty$-category of {\em $\cD$-stable $C^*$-algebras}. From the perspective of Elliott's Classification Program the $\infty$-category $\iCsep[\mathcal{Z}^{-1}]$ would be the most interesting localization, where $\mathcal{Z}$ is the Jiang--Su algebra. We call it the $\infty$-category of {\em $\mathcal{Z}$-stable $C^*$-algebras}.

\item Let $\Oinf$ be the universal unital Cuntz algebra on generators $\{s_i,s_i^*\,|\,i\in\NN\}$ satisfying $s_i^*s_j=\delta_{ij}$. If $\cD=\Oinf$ we call $\iCsep[\Oinf^{-1}]$ the $\infty$-category of {\em $\Oinf$-stable $C^*$-algebras}. The suspension stable version of this category will be analysed in the next section.
\end{enumerate}
\end{ex}
 
 \begin{prop}
  Let us suppose that there is a unital embedding $\iota_\cD:\cD\map\cD'$ of strongly self-absorbing $C^*$-algebras. Then $\cD'$ is an idempotent object in $\iCsep[\cD^{-1}]$.
 \end{prop}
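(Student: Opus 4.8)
The plan is to verify the defining equivalence directly in the smashing localization $\iCsep[\cD^{-1}]$, whose tensor product is the one inherited from $\iCsep$ (namely $\prot$, computed on objects and then $\cD$-localized) and whose unit is $\cD$. So I must exhibit a map $e:\cD\map\cD'$ (in $\iCsep[\cD^{-1}]$) such that $\id_{\cD'}\otimes e:\cD'\simeq\cD'\otimes_{\iCsep[\cD^{-1}]}\cD\map\cD'\otimes_{\iCsep[\cD^{-1}]}\cD'$ is an equivalence. The natural candidate for $e$ is (the image under $L$ of) the given unital embedding $\iota_\cD:\cD\map\cD'$. Since tensoring with $\cD$ is the localization functor $L$, the map in question is $L(\id_{\cD'}\prot\iota_\cD):\cD'\prot\cD\map\cD'\prot\cD'$, and the claim reduces to showing that $\id_{\cD'}\prot\iota_\cD:\cD'\prot\cD\map\cD'\prot\cD'$ becomes an equivalence after $\cD$-localization — equivalently, that it is a homotopy equivalence after tensoring once more with $\cD$, i.e. $\cD'\prot\cD\prot\cD\map\cD'\prot\cD'\prot\cD$.

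The key input is the analogue for $\cD$ of the strongly self-absorbing property: by Remark~\ref{SSA} the first-factor inclusion $\id_\cD\otimes\one_\cD:\cD\map\cD\prot\cD$ is homotopic to an isomorphism. First I would record the same statement for $\cD'$, and then chase the following: the composite $\cD'\prot\cD\xrightarrow{\id\prot\iota_\cD}\cD'\prot\cD'$ followed by (a homotopy inverse of) the isomorphism $\cD'\map\cD'\prot\cD'$ exhibits the source as homotopy equivalent to $\cD'$; here I use that $\iota_\cD$ is \emph{unital}, so $\id_{\cD'}\prot\iota_\cD$ is approximately/genuinely compatible with the unital inclusions $\cD'\hookrightarrow\cD'\prot\cD$ and $\cD'\hookrightarrow\cD'\prot\cD'$ up to homotopy. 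Combined with the fact (Lemma~\ref{SSAidem}) that $\cD$ is already idempotent — so that $\cD\map\cD\prot\cD$ induced by $\one_\cD$ is an equivalence in $\iCsep$, hence $\cD'\prot\cD\map\cD'\prot\cD\prot\cD$ is an equivalence — one deduces that the relevant map is an equivalence in $\iCsep[\cD^{-1}]$. Essentially the argument is: in $\iCsep[\cD^{-1}]$ both $\cD'\prot\cD$ and $\cD'\prot\cD'$ are identified with $\cD'$ compatibly with the structure maps, because $-\prot\cD$ kills the difference between $\cD$ and $\cD'$ on the nose (both are unital and $\cD$ absorbs).

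The main obstacle I anticipate is bookkeeping the homotopies coherently at the $\infty$-categorical level rather than just on homotopy classes: strong self-absorption gives a \emph{homotopy} between $\id\otimes\one$ and an isomorphism, and I must make sure that tensoring this homotopy with $\id_{\cD'}$ and with $\iota_\cD$, and composing, lands me with an honest equivalence of the composite in the $\infty$-category $\iCsep[\cD^{-1}]$ (not merely in $\h\iCsep[\cD^{-1}]$). Since $L$ is a left adjoint it preserves equivalences and the monoidal structure (by the cited Lurie results), so it suffices to produce the homotopy-equivalence upstairs in the topological category $\Csep$ and then apply $L$; detecting an equivalence in $\iCsep[\cD^{-1}]$ can be done by checking it maps to an isomorphism in $\NSH$ or by the idempotent criterion. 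The secondary subtlety is that $\cD'\neq\CC$ is required for $\cD'$ to be strongly self-absorbing (so that Remark~\ref{SSA} applies), which is part of the hypothesis, and that one should check $\cD'$ actually lies in $\iCsep[\cD^{-1}]$ — it does, being $\cD$-stable via $\cD'\simeq\cD'\prot\cD$ from the unital inclusion $\iota_\cD$ together with strong self-absorption of $\cD'$ and the embedding $\cD\hookrightarrow\cD'$.
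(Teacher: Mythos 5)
Your overall strategy is the same as the paper's: take $\iota_\cD$ as the candidate structure map, observe that the triangle
\begin{equation*}
\id_{\cD'}\otimes\one_{\cD'} \;=\; (\id_{\cD'}\otimes\iota_\cD)\circ(\id_{\cD'}\otimes\one_\cD) : \cD'\longrightarrow\cD'\prot\cD\longrightarrow\cD'\prot\cD'
\end{equation*}
commutes in $\Csep$, use that the diagonal $\id_{\cD'}\otimes\one_{\cD'}$ is homotopic to an isomorphism by strong self-absorption of $\cD'$, and conclude by two-out-of-three that $\id_{\cD'}\otimes\iota_\cD$ is a homotopy equivalence, which after localization is exactly the idempotence condition with respect to the unit $\cD$ of $\iCsep[\cD^{-1}]$.

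The gap is in what you label the \emph{secondary subtlety} --- showing that $\cD'$ is $\cD$-stable, i.e.\ that $\id_{\cD'}\otimes\one_\cD:\cD'\to\cD'\prot\cD$ is itself homotopic to an isomorphism. This is in fact the essential input, not a side remark, and it does not follow from the data you invoke. The commutative triangle together with strong self-absorption of $\cD'$ only tells you that $\id_{\cD'}\otimes\one_\cD$ has a \emph{left} inverse up to homotopy (namely the composite $\cD'\prot\cD\to\cD'\prot\cD'\to\cD'$ you write down), so it is split mono in $\h\iCsep$; nothing in your chain shows that this one-sided inverse is two-sided. Consequently the claim that ``the composite exhibits the source as homotopy equivalent to $\cD'$'' is unjustified as stated: it would be justified precisely if $\id_{\cD'}\otimes\one_\cD$ were already known to be a homotopy equivalence, which is what you are trying to establish. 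The paper closes exactly this hole by appealing to Proposition 5.12 of Toms--Winter, which is the nontrivial statement that a unital embedding $\cD\hookrightarrow\cD'$ of strongly self-absorbing $C^*$-algebras forces $\cD'$ to be $\cD$-absorbing with $\id_{\cD'}\otimes\one_\cD$ approximately unitarily equivalent (hence, after \cite{DadWin}/Remark~\ref{SSA}, homotopic) to an isomorphism. That result uses the approximately-central-embedding machinery and is not a formal consequence of the definitions, so you need to cite it (or reprove it) rather than assert it.
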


 \begin{proof}
 Consider the following commutative diagram in $\Csep$
  
  \beqn
  \xymatrix{
  \cD'\ar[rr]^{\id_{\cD'}\otimes \one_{\cD'}}\ar[rd]_{\id_{\cD'}\otimes\one_\cD} && \cD'\prot\cD'\\
  &\cD'\prot\cD\ar[ru]_{\id_{\cD'}\otimes\iota_\cD}.
  }
  \eeqn Since $\cD'$ is strongly self-absorbing $\id_{\cD'}\otimes\one_{\cD'}$ is homotopic to an isomorphism $\cD'\map\cD'\prot\cD'$. It follows from Proposition 5.12 of \cite{TomWin} that $\id_{\cD'}\otimes\one_\cD$ is homotopic to an isomorphism $\cD'\map\cD'\prot\cD$ demonstrating that $\cD'$ is $\cD$-stable whence $\cD'\in\iCsep[\cD^{-1}]$. It follows that $\id_{\cD'}\otimes\iota_\cD$ is a homotopy equivalence. Observe that the unit object in $\iCsep[\cD^{-1}]$ is $\cD$. Thus the unital embedding $\iota_\cD:\cD\map\cD'$ exhibits $\cD'$  as an idempotent object in $\iCsep[\cD^{-1}]$. 
 \end{proof}

 \begin{cor}
  In the localized $\infty$-category $\iCsep[\mathcal{Z}^{-1}]$ every strongly self-absorbing $C^*$-algebra is an idempotent object. 
 \end{cor}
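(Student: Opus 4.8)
The plan is to deduce the corollary directly from the preceding proposition by recalling the special role of the Jiang--Su algebra $\mathcal{Z}$ among strongly self-absorbing $C^*$-algebras. First I would recall the well-known fact from \cite{TomWin} (Corollary 3.1 there) that $\mathcal{Z}$ embeds unitally into every strongly self-absorbing $C^*$-algebra $\cD$; equivalently, every strongly self-absorbing $C^*$-algebra is $\mathcal{Z}$-stable, i.e.\ $\cD\cong\cD\prot\mathcal{Z}$. This is the one external input the argument needs, and it is exactly the hypothesis $\iota_\cD:\mathcal{Z}\map\cD$ of the proposition with the smaller algebra taken to be $\mathcal{Z}$ itself.

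Given such a unital embedding $\iota_\cD:\mathcal{Z}\map\cD$, I would simply invoke the proposition with $\cD$ playing the role of ``$\cD'$'' and $\mathcal{Z}$ playing the role of ``$\cD$''. The proposition then says precisely that $\cD$ is an idempotent object in $\iCsep[\mathcal{Z}^{-1}]$, with the unital map $\iota_\cD:\mathcal{Z}\map\cD$ exhibiting it as such (noting that $\mathcal{Z}$ is the unit object of the smashing localization $\iCsep[\mathcal{Z}^{-1}]$). Since $\cD$ was an arbitrary strongly self-absorbing $C^*$-algebra, this proves the claim. One small sanity check I would include: when $\cD=\mathcal{Z}$ itself the statement reduces to Lemma \ref{SSAidem} applied inside the localization, so there is no edge-case degeneracy.

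The only genuine content beyond citing the proposition is the assertion that $\mathcal{Z}$ embeds unitally in every strongly self-absorbing $\cD$, so that is where I expect the ``work'' to be located — though here it is a matter of quoting \cite{TomWin} rather than proving anything. I do not anticipate any real obstacle: the proof is a one-line application of the proposition once the embedding $\mathcal{Z}\hookrightarrow\cD$ is in hand, and the proposition has already done the homotopy-theoretic bookkeeping (using Proposition 5.12 of \cite{TomWin} to promote $\id\otimes\one$ to a homotopy equivalence). If one wanted to be fully self-contained one could instead argue that $\cD$-stability follows from $\mathcal{Z}$-absorption combined with the fact that $\cD\prot\mathcal{Z}\cong\cD$, but appealing to the proposition as already stated is cleaner.
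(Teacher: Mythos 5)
Your proof is correct and follows essentially the same route as the paper: invoke the preceding proposition with $\mathcal{Z}$ in the role of the smaller algebra, using the fact that $\mathcal{Z}$ embeds unitally into every strongly self-absorbing $C^*$-algebra. One small slip: that embedding result is not in \cite{TomWin} but rather in Winter's later paper \cite{WinZStable} (cited by the paper as Corollary 3.2 there, phrased as $\mathcal{Z}$ being the initial object in the homotopy category of strongly self-absorbing $C^*$-algebras with unital $*$-homomorphisms), so you should correct the reference.
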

 
 \begin{proof}
  The assertion follows from the characterization of $\mathcal{Z}$ as the initial object in the homotopy category of strongly self-absorbing $C^*$-algebras with unital $*$-homomorphisms (see Corollary 3.2 of \cite{WinZStable}).
 \end{proof}

 \begin{rem}
 In view of the above Corollary one may construct $\iCsep[\cD^{-1}]$ for any strongly self-absorbing $C^*$-algebra $\cD$ as a localization of $\iCsep[\mathcal{Z}^{-1}]$. Thus equivalences in $\iCsep[\mathcal{Z}^{-1}]$ contain the most refined information amongst all smashing localizations with respect to strongly self-absorbing $C^*$-algebras.
 \end{rem}

  \noindent
  For any $A,B\in\Csep$ we denote by $[A,B]$ the homotopy classes of $*$-homomorphisms $A\map B$.
  
\begin{prop} \label{DStab}
For any $A,B\in\Csep$ and any strongly self-absorbing $C^*$-algebra $\cD$ there is a natural isomorphism $$\h\iCsep[\cD^{-1}](L(A),L(B))\cong [A,B\prot\cD].$$ 
\end{prop}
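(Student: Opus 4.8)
The plan is to identify the smashing localization $L_\cD\iCsep=\iCsep[\cD^{-1}]$ with the full $\infty$-subcategory of $\cD$-local objects, namely those $C^*$-algebras $X$ for which the unit map $X\to X\prot\cD$ is an equivalence, and then to compute the mapping space (or rather, its $\pi_0$) in this localization via the standard adjunction between a localization and its inclusion. Concretely, since $L_\cD(-)=-\prot\cD$ is a smashing localization, for any $A,B\in\iCsep$ one has a natural equivalence of mapping spaces $\Map_{\iCsep[\cD^{-1}]}(L_\cD(A),L_\cD(B))\simeq\Map_{\iCsep}(A,L_\cD(B))=\Map_{\iCsep}(A,B\prot\cD)$, because $L_\cD(B)$ is already local and $L_\cD$ is left adjoint to the inclusion. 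Passing to homotopy categories (i.e.\ applying $\pi_0$), the left-hand side is $\h\iCsep[\cD^{-1}](L(A),L(B))$ and the right-hand side is $\pi_0\Map_{\iCsep}(A,B\prot\cD)=\h\iCsep(A,B\prot\cD)$.

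The remaining point is to identify $\h\iCsep(A,B\prot\cD)$ with the set $[A,B\prot\cD]$ of homotopy classes of $*$-homomorphisms. This is where I would invoke the construction of $\iCsep$ as the topological (coherent) nerve of the topological category $\Csep$: for a topological category $\mathcal{T}$ with the homotopy type of a CW complex on each mapping space, the homotopy category of its coherent nerve has morphism sets $\pi_0$ of the mapping spaces, i.e.\ $\h\iCsep(A,C)=\pi_0\Map_{\Csep}(A,C)=[A,C]$ for $C=B\prot\cD$. So the chain of identifications is: smashing-localization adjunction, then $\pi_0$, then the description of homotopy classes in a topological nerve. Naturality in $A$ and $B$ is automatic since every step is natural (the adjunction unit/counit, $\pi_0$, and the nerve construction are all functorial).

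One subtlety I would want to check carefully — and I expect this to be the main obstacle — is that $\cD$ really is an \emph{idempotent} object of $\iCsep$ in Lurie's sense, so that $L_\cD(-)=-\prot\cD$ genuinely is a localization with the expected universal property; but this is exactly Lemma~\ref{SSAidem} together with the cited results of \cite{LurHigAlg} and \cite{GepGroNik}, so it is already available. A related point is that one must know the unit map $A\to A\prot\cD$ is the localization map and that $B\prot\cD$ is $\cD$-local, i.e.\ that $B\prot\cD\to B\prot\cD\prot\cD$ is an equivalence; this follows from the idempotence of $\cD$ (the map $\id_{B\prot\cD}\otimes e$ is an equivalence where $e:\one=\CC\to\cD$ is the unit). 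With these in hand the proof is a short formal argument, so I would keep it to a few lines citing Lemma~\ref{SSAidem}, the idempotent-object formalism of \cite{LurHigAlg}, and the computation of morphism sets in a topological nerve.
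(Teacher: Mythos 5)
Your proposal is correct and takes a genuinely different, more economical route than the paper's own argument. The paper first observes the fully-faithfulness identification $\h\iCsep[\cD^{-1}](L(A),L(B))\cong\h\iCsep(A\prot\cD,B\prot\cD)$ and then constructs, by hand, a pair of mutually inverse maps $K$ (precomposition with $\theta_A=\id_A\otimes\one_\cD$) and $M$ (tensoring with $\id_\cD$ followed by a correction by the self-absorbing isomorphism $\gamma_B$), verifying $K\circ M=\id$ and $M\circ K=\id$ by explicit homotopy computations. This verification uses, in addition to the idempotence of $\cD$, the fact that the tensor flip $\tau_\cD:\cD\prot\cD\to\cD\prot\cD$ is homotopic to the identity. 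Your approach skips all of that: once $\cD$ is known to be an idempotent object (Lemma~\ref{SSAidem}), Lurie's Proposition~4.8.2.4 makes $L_\cD=-\prot\cD$ a localization functor, the inclusion $\iCsep[\cD^{-1}]\hookrightarrow\iCsep$ is its right adjoint, and the adjunction together with $\cD$-locality of $B\prot\cD$ gives $\Map_{\iCsep[\cD^{-1}]}(L(A),L(B))\simeq\Map_{\iCsep}(A,B\prot\cD)$ in one line; passing to $\pi_0$ and identifying $\h\iCsep(A,B\prot\cD)$ with $[A,B\prot\cD]$ (a fact about the topological nerve that the paper also cites, from Section~2.1 of \cite{MyNSH}) finishes it. What the paper's explicit $K$, $M$ construction buys is a concrete description of the isomorphism, which the author then re-uses in the proof of Theorem~\ref{Univ} ("It follows from the arguments in the proof of Proposition~\ref{DStab}..."); your formal argument would also yield the facts needed there (the unit map $A\to A\prot\cD$ is an $L$-equivalence, hence induces an isomorphism on maps into local objects), but less transparently. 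Your version is shorter, relies on fewer specific properties of strongly self-absorbing $C^*$-algebras, and fits more naturally into the $\infty$-categorical framework the paper sets up.
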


\begin{proof}
Let us first observe that there is an identification $$\h\iCsep[\cD^{-1}](L(A),L(B))\cong\h\iCsep(A\prot\cD,B\prot\cD).$$ There is an element $\theta_A=\id_A\otimes\one_\cD\in\Csep(A,A\prot\cD)$ sending $a\mapsto a\otimes \one_\cD$. This induces a map $$K:\h\iCsep(A\prot\cD,B\prot\cD)\map\h\iCsep(A,B\prot\cD)$$ by precomposing with $[\theta_A]$ (here $[-]$ denotes the homotopy class). Using the fact that $\id_{\cD}\otimes \one_{\cD}:\cD\map\cD\prot\cD$ is homotopic to an isomorphism $\gamma:\Csep(\cD,\cD\prot\cD)$, we deduce that the map $\id_B\otimes\id_{\cD}\otimes \one_{\cD}$ is homotopic to an isomorphism $\gamma_B\in\Csep(B\prot\cD,B\prot\cD\prot\cD)$. Now we define a map $$M:\h\iCsep(A,B\prot\cD)\map\h\iCsep(A\prot\cD,B\prot\cD)$$ as follows: $M([\phi])=[\gamma_B^{-1}\circ (\phi\otimes\id_{\cD})]$. Observe that $K\circ M([\phi]) = [\gamma_B^{-1}\circ (\phi\otimes\id_{\cD})]\circ [\theta_A]=[\gamma_B^{-1}\circ (\id_B\otimes\id_\cD\otimes\one_\cD)\circ\phi]$. Since $[\id_B\otimes\id_\cD\otimes\one_\cD]=[\gamma_B]$ the composition $K\circ M = \id : \h\iCsep(A,B\prot\cD)\map\h\iCsep(A,B\prot\cD)$.

Now $M\circ K([\psi])=M([\psi\circ\theta_A])=[\gamma_B^{-1}\circ ((\psi\circ\theta_A)\otimes\id_\cD)]$. Let $\tau_\cD:\cD\map\cD$ denote the tensor flip map, which is also homotopic to the identity. A verification on the simple tensors demonstrates that $[(\id_B\otimes\tau_\cD) \circ ((\psi\circ\theta_A)\otimes\id_\cD)]= [\gamma_B \circ\psi]$. It follows that $M\circ K = \id: \h\iCsep(A\prot\cD,B\prot\cD)\map\h\iCsep(A\prot\cD,B\prot\cD)$. It remains to observe that $\h\iCsep(A,B\prot\cD)\cong[A,B\prot\cD]$ (see Section 2.1 of \cite{MyNSH}).
\end{proof}

 \begin{prop} \label{OIdem}
 The $*$-homomorphism $\id\otimes\one_\cD: \cpt\map\cpt\prot\cD$ (resp. $\id_\cD\otimes e_{11}:\cD\map\cD\prot\cpt$) exhibits $\cpt\prot\cD$ (resp. $\cD\prot\cpt$) as an idempotent object in $\iCsep[\cpt^{-1}]$ (resp. $\iCsep[\cD^{-1}]$).
 \end{prop}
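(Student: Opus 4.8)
The plan is to reduce the statement to two facts already available: that $\cpt$ and every strongly self-absorbing $\cD$ are idempotent objects of $\iCsep$ (Lemma \ref{SSAidem}), and that idempotency is preserved both by tensor products and by symmetric monoidal functors. Indeed, in any symmetric monoidal $\infty$-category the tensor product of two idempotent objects is again idempotent — this is immediate from the symmetry isomorphism, which identifies $(E_1\otimes E_2)\otimes(E_1\otimes E_2)$ with $(E_1\otimes E_1)\otimes(E_2\otimes E_2)$ and hence $\id_{E_1\otimes E_2}\otimes(e_1\otimes e_2)$ with $(\id_{E_1}\otimes e_1)\otimes(\id_{E_2}\otimes e_2)$ — and a symmetric monoidal functor, in particular a smashing localization such as $\iCsep\to\iCsep[\cpt^{-1}]$ or $\iCsep\to\iCsep[\cD^{-1}]$, carries idempotent objects to idempotent objects. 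I would nonetheless spell the argument out, since the statement also names the two specific structure maps.

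For the first assertion I would proceed in three steps. First, $\cpt\prot\cD$ actually lies in $\iCsep[\cpt^{-1}]$: since $\cpt\simeq\cpt\prot\cpt$ in $\iCsep$ by Lemma \ref{SSAidem}, we get $(\cpt\prot\cD)\prot\cpt\simeq(\cpt\prot\cpt)\prot\cD\simeq\cpt\prot\cD$, i.e. $\cpt\prot\cD$ is $\cpt$-stable; moreover $\prot$ preserves $\cpt$-stability (if $Y\prot\cpt\simeq Y$ then $X\prot Y\prot\cpt\simeq X\prot Y$), so on the reflective subcategory $\iCsep[\cpt^{-1}]$ the inherited symmetric monoidal product is simply $\prot$ with unit $\cpt$. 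Hence $E:=\cpt\prot\cD$ is an object of $\iCsep[\cpt^{-1}]$ and $e:=\id_\cpt\otimes\one_\cD\colon\cpt\to\cpt\prot\cD$ is a map $\one\to E$ there. Second, using the coherence (associativity, unit, symmetry) isomorphisms to regroup the tensor factors and to identify $E\otimes\one\simeq E$, the morphism $\id_E\otimes e\colon E\otimes\one\to E\otimes E$ becomes
\[
\id_{\cpt\prot\cpt}\otimes(\id_\cD\otimes\one_\cD)\colon\ (\cpt\prot\cpt)\prot\cD\ \longrightarrow\ (\cpt\prot\cpt)\prot(\cD\prot\cD).
\]
Third, by Remark \ref{SSA} the map $\id_\cD\otimes\one_\cD\colon\cD\to\cD\prot\cD$ is homotopic to an isomorphism, hence an equivalence in $\iCsep$ and therefore in $\iCsep[\cpt^{-1}]$; tensoring it with $\id_{\cpt\prot\cpt}$ preserves equivalences, so $\id_E\otimes e$ is an equivalence and $\cpt\prot\cD$ is an idempotent object of $\iCsep[\cpt^{-1}]$.

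For the second assertion one runs the same argument with $\cpt$ and $\cD$ exchanging roles in the two key inputs. Since $\cD\prot\cD\simeq\cD$ (Remark \ref{SSA}), one has $(\cD\prot\cpt)\prot\cD\simeq(\cD\prot\cD)\prot\cpt\simeq\cD\prot\cpt$, so $\cD\prot\cpt$ is $\cD$-stable; the inherited monoidal product on $\iCsep[\cD^{-1}]$ is $\prot$ with unit $\cD$; and $e':=\id_\cD\otimes e_{11}\colon\cD\to\cD\prot\cpt$ is a map $\one\to E'$ with $E'=\cD\prot\cpt$. The same regrouping identifies $\id_{E'}\otimes e'\colon E'\otimes\one\to E'\otimes E'$ with
\[
\id_{\cD\prot\cD}\otimes(\id_\cpt\otimes e_{11})\colon\ (\cD\prot\cD)\prot\cpt\ \longrightarrow\ (\cD\prot\cD)\prot(\cpt\prot\cpt),
\]
and $\id_\cpt\otimes e_{11}\colon\cpt\to\cpt\prot\cpt$ is an equivalence in $\iCsep$ — this is precisely the assertion that $\cpt$ is an idempotent object of $\iCsep$ (Lemma \ref{SSAidem}). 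Tensoring with $\id_{\cD\prot\cD}$ then shows $\id_{E'}\otimes e'$ is an equivalence.

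The only delicate point is the middle step: carefully tracking $\id_E\otimes e$ through the coherence isomorphisms of the symmetric monoidal $\infty$-category and matching it with the displayed morphism built out of $\id_\cD\otimes\one_\cD$ (resp. $\id_\cpt\otimes e_{11}$). This is routine because the ambient structure is a coherent symmetric monoidal $\infty$-category, but it is where all the bookkeeping goes; everything else is formal once one knows $\cpt\simeq\cpt\prot\cpt$, $\cD\simeq\cD\prot\cD$, and the statements of Lemma \ref{SSAidem} and Remark \ref{SSA}. Alternatively, one can bypass the bookkeeping entirely by invoking the essential uniqueness of idempotent structures on a fixed object (see \cite{LurHigAlg}) together with the conceptual argument of the first paragraph, which already produces (necessarily unique) idempotent structures on $\cpt\prot\cD$ and $\cD\prot\cpt$ in the respective localized categories.
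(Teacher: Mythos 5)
Your proof is correct, but it follows a genuinely different and more self-contained route than the paper's. The paper's argument considers the composite $\cpt\prot\cD\to\cpt\prot\cpt\prot\cD\to\cpt\prot\cD\prot\cpt\prot\cD$ (obtained by tensoring $\CC\overset{\iota}{\to}\cpt\overset{\id\otimes\one_\cD}{\to}\cpt\prot\cD$ with $\cpt\prot\cD$), cites the proof of Theorem 2.5 of Dadarlat--Pennig to conclude that the composite is an equivalence, and then runs a two-out-of-three argument: the first leg is already inverted in $\iCsep[\cpt^{-1}]$, hence so is the second, which is the structure map in question. Your argument is purely formal and internal to the paper: idempotent objects of a symmetric monoidal $\infty$-category are closed under tensor product (a coherence exercise), and symmetric monoidal functors---in particular smashing localizations---preserve them; making this explicit, you regroup $\id_E\otimes e$ via associativity and symmetry into $\id_{\cpt\prot\cpt}\otimes(\id_\cD\otimes\one_\cD)$, and the latter is an equivalence by Remark \ref{SSA}. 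This uses only Lemma \ref{SSAidem} and Remark \ref{SSA}, avoids the external reference, and as a bonus makes transparent the stronger fact recorded in the Remark that immediately follows Proposition \ref{OIdem} in the paper (that $\cpt\prot\cD$ is already idempotent in $\iCsep$, not merely in the localization): one simply carries out the same regrouping on $\id_E\otimes(\iota\otimes\one_\cD)$. The one point I would sharpen is your "second step": state in one sentence that on $\cpt$-stable objects the inherited tensor product of $\iCsep[\cpt^{-1}]$ agrees with $\prot$ up to natural equivalence, so the regrouping of $\id_E\otimes e$ can be performed in $\iCsep$ and the resulting equivalence is transported to the localization---you allude to this but it is the only place where the ambient and localized tensor products need to be compared, so it deserves to be explicit.
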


 \begin{proof}
  We only show that $\cpt\prot\cD$ is an idempotent object in $\iCsep[\cpt^{-1}]$. The proof of the other assertion is similar. Consider the diagram $\CC\overset{\iota}{\map}\cpt\overset{\id\otimes\one_\cD}{\map}\cpt\prot\cD$, where $\iota(1)= e_{11}$. Tensoring with $\cpt\prot\cD$ we get a diagram $\cpt\prot\cD\map\cpt\prot\cpt\prot\cD\map\cpt\prot\cD\prot\cpt\prot\cD$ whose composition is the $*$-homomorphism $(a\otimes x) \mapsto (e_{11}\otimes\one_\cD)\otimes (a\otimes x)$. According to the proof of Theorem 2.5 of \cite{DadPen} this composition is an equivalence. Moreover, the map $\cpt\prot\cD\map\cpt\prot\cpt\prot\cD$ is an equivalence in $\iCsep[\cpt^{-1}]$. Consequently, the map $\cpt\prot\cpt\prot\cD\map\cpt\prot\cD\prot\cpt\prot\cD$ is also an equivalence exhibitting $\cpt\prot\cD$ as an idempotent object in $\iCsep[\cpt^{-1}]$. 
 \end{proof}

\begin{rem}
 Let $\cD$ be any strongly self-absorbing $C^*$-algebra. It follows from Theorem 2.5 of \cite{DadPen} that the $*$-homomorphism $\CC\map\cD\prot\cpt$ sending $1\mapsto \one_\cD\otimes e_{11}$ exhibits $\cD\prot\cpt$ as an idempotent object in $\iCsep$. Moreover, the argument in the above Proposition \ref{DStab} goes through to show that $$\h\iCsep[(\cD\prot\cpt)^{-1}](L(A),L(B))\cong [A,B\prot(\cD\prot\cpt)].$$
\end{rem}
 
 \begin{cor} \label{OtoK}
  The $(\Oinf\prot\cpt)$-stable $\infty$-category $\iCsep[(\Oinf\prot\cpt)^{-1}]$ is equivalent to the localization of $\iCsep[\cpt^{-1}]$ with respect to $\Oinf\prot\cpt$.
 \end{cor}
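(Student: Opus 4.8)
The plan is to identify both $\infty$-categories with the full symmetric monoidal subcategory of $\iCsep$ spanned by the $(\Oinf\prot\cpt)$-local objects, exploiting the general principle that an iterated smashing localization — first at an idempotent object $E$, then at an $E$-local idempotent object $F$ — coincides with the smashing localization at $F$, since $E\otimes F\simeq F$. First note that both localizations in the statement are meaningful: $\Oinf\prot\cpt$ is an idempotent object of $\iCsep$ by the Remark preceding the Corollary (with $\cD=\Oinf$), and an idempotent object of $\iCsep[\cpt^{-1}]$ by Proposition \ref{OIdem} (again with $\cD=\Oinf$, together with $\cpt\prot\Oinf\simeq\Oinf\prot\cpt$). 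Recall also that a smashing localization $L_E\cC$ is the full subcategory of $E$-local objects, equivalently the essential image of $L_E(-)=-\otimes E$, and that it inherits the symmetric monoidal structure $X\otimes_{L_E\cC}Y:=L_E(X\otimes_\cC Y)$ (Proposition 2.2.1.9 and Proposition 4.8.2.7 of \cite{LurHigAlg}).

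The crucial observation is that every $(\Oinf\prot\cpt)$-local object $X$ of $\iCsep$ is automatically $\cpt$-local. Indeed, the defining equivalence $X\simeq X\prot\Oinf\prot\cpt\simeq(X\prot\Oinf)\prot\cpt$ exhibits $X$ as equivalent to an object of the form $W\prot\cpt$, and any such object is $\cpt$-local since the map $W\prot\cpt\map W\prot\cpt\prot\cpt$ is, up to the symmetry constraint, $\id_W\otimes(\id_\cpt\otimes e)$ with $e:\CC\map\cpt$ the idempotent structure map — an equivalence because $\cpt$ is idempotent (Lemma \ref{SSAidem}). Hence the $(\Oinf\prot\cpt)$-local objects of $\iCsep$ already belong to $\iCsep[\cpt^{-1}]$.

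To conclude I would compare the two localizations directly. On $\cpt$-local objects the tensor product inherited by $\iCsep[\cpt^{-1}]$ restricts to $\prot$, because $L_\cpt(Y\prot Z)=Y\prot Z\prot\cpt\simeq Y\prot Z$ when $Y$ is $\cpt$-local. Therefore the localization of $\iCsep[\cpt^{-1}]$ with respect to $\Oinf\prot\cpt$ is the full subcategory of those $Y\in\iCsep[\cpt^{-1}]$ for which $Y\map Y\prot\Oinf\prot\cpt$ is an equivalence; by the preceding paragraph this condition already forces $\cpt$-locality, so this full subcategory coincides with $\iCsep[(\Oinf\prot\cpt)^{-1}]$, and as both symmetric monoidal structures are obtained by applying $L_{\Oinf\prot\cpt}$ to $\prot$, the identification is one of symmetric monoidal $\infty$-categories. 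The one point that needs a little care — and is the main obstacle, such as it is — is precisely this last compatibility of the two monoidal structures on the common underlying category; it is harmless here because $\cpt\otimes(\Oinf\prot\cpt)\simeq\Oinf\prot\cpt$, so inverting $\cpt$ first does not change the idempotent algebra that is ultimately inverted.
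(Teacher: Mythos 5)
The paper gives no explicit proof of this corollary, treating it as an immediate consequence of Proposition \ref{OIdem} and the preceding remark; your proof fills in exactly the argument the reader is expected to supply. The observations you single out — that $\Oinf\prot\cpt$ is idempotent both in $\iCsep$ and in $\iCsep[\cpt^{-1}]$, that $(\Oinf\prot\cpt)$-locality already forces $\cpt$-locality via $\cpt\prot\Oinf\prot\cpt\simeq\Oinf\prot\cpt$, and that the inherited tensor on $\cpt$-local objects is just $\prot$ — are precisely the points needed, and the identification of both sides with the common full subcategory of $(\Oinf\prot\cpt)$-local objects of $\iCsep$ is the intended route. Correct, and essentially the same argument the paper implies.
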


 \begin{rem} \label{HtpyIsom}
 Consider the following problem: Given two connected finite pointed CW complexes $(X,x)$ and $(Y,y)$ are the $C^*$-algebras $\C(X,x)\prot\cpt$ and $\C(Y,y)\prot\cpt$ homotopy equivalent? The answer to the question can be detected in terms of a bivariant homology theory, viz., connective $\mathrm{kk}$-theory (see Theorem 2.4 of \cite{DadMcC}). The connective $\mathrm{kk}$-category for connected finite pointed CW complexes can be viewed within the localization $\iCsep[\cpt^{-1}]$ (cf. Example \ref{ConE} (1)) and it should not be confused with Cuntz $kk$-theory for $m$-algebras (or locally convex algebras). Homotopy equivalences of matrix bundles can also be detected by bivariant connective $\E$-theory \cite{ThomThesis}. In order to determine actual isomorphism types (not merely homotopy types) one needs sharper invariants \cite{DadPen}.
 \end{rem}

 \noindent
 Now we demonstrate that the homotopy category of the smashing localization $\h\iCsep[\cD^{-1}]$ admits a universal characterization much like $\KK$-theory. The localization functor $L_\cD:\iCsep\functor\iCsep[\cD^{-1}]$ induces a canonical (ordinary) functor $L_\cD:\Csep\functor\h\iCsep[\cD^{-1}]$. Recall that a functor $F:\Csep\functor\cC$ ($\cC$ an ordinary category) is called {\em $\cD$-stable} if $F$ sends the morphism $A\map A\prot\cD$ mapping $a\mapsto a\otimes \one_\cD$ to an isomorphism in $\cC$ for all $A\in\Csep$.
 
\begin{thm} \label{Univ}
 The functor $L_\cD:\Csep\functor\h\iCsep[\cD^{-1}]$ is the universal homotopy invariant and $\cD$-stable functor on $\Csep$.
\end{thm}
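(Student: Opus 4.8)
The plan is to verify the universal property directly, following the standard pattern for such characterizations (as in the universal property of $\KK$-theory). First I would unwind the definitions: $L_\cD:\Csep\functor\h\iCsep[\cD^{-1}]$ is homotopy invariant because it factors through $\iCsep$ (the topological nerve already inverts homotopies, cf. Section 2.1 of \cite{MyNSH}), and it is $\cD$-stable because in the smashing localization $\iCsep[\cD^{-1}]$ the map $A\map A\prot\cD$, $a\mapsto a\otimes\one_\cD$, becomes an equivalence — this is precisely the content of Lemma \ref{SSAidem} together with the fact that $L_\cD(X)=X\otimes\cD$. So $L_\cD$ is a homotopy invariant and $\cD$-stable functor; it remains to show it is initial among such.

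Given any homotopy invariant and $\cD$-stable functor $F:\Csep\functor\cC$ into an ordinary category $\cC$, I would produce a unique factorization $\bar F:\h\iCsep[\cD^{-1}]\functor\cC$ with $\bar F\circ L_\cD\simeq F$. The key step is Proposition \ref{DStab}, which identifies the Hom-sets of $\h\iCsep[\cD^{-1}]$ concretely: $\h\iCsep[\cD^{-1}](L_\cD(A),L_\cD(B))\cong [A,B\prot\cD]$. So a functor out of $\h\iCsep[\cD^{-1}]$ is the same datum as an assignment on objects of $\Csep$ together with, for each pair $(A,B)$, a map $[A,B\prot\cD]\map\cC(\bar F A,\bar F B)$ compatible with composition. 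Given a homotopy class $[\phi]\in[A,B\prot\cD]$, one applies $F$ to get $F(\phi):F(A)\map F(B\prot\cD)$ and then composes with the inverse of the isomorphism $F(B)\overset{\sim}{\map}F(B\prot\cD)$ coming from $\cD$-stability of $F$ (applied to $B$); homotopy invariance of $F$ guarantees this depends only on the homotopy class $[\phi]$, so $\bar F$ is well-defined on morphisms. One then checks functoriality — that $\bar F$ respects composition and identities — using the explicit description of composition in $\h\iCsep[\cD^{-1}]$ via the maps $K$ and $M$ from the proof of Proposition \ref{DStab} (essentially: composition $[A,B\prot\cD]\times[B,C\prot\cD]\map[A,C\prot\cD]$ is $[\psi]\circ[\phi]=[(\psi\otimes\id_\cD)\circ\gamma_B^{-1}\circ$ something... up to the flip $\tau_\cD$ being homotopic to the identity], and $F$ kills exactly these discrepancies). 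Uniqueness of $\bar F$ is forced since $L_\cD$ is essentially surjective (every object of $\h\iCsep[\cD^{-1}]$ is of the form $L_\cD(A)$) and, on morphisms, every map $L_\cD(A)\map L_\cD(B)$ is $L_\cD$ applied to some $*$-homomorphism $A\map B\prot\cD$ followed by the canonical equivalence $L_\cD(B\prot\cD)\simeq L_\cD(B)$, so its image under any factorization is determined.

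The main obstacle I expect is not conceptual but bookkeeping: showing well-definedness and functoriality of $\bar F$ on morphisms requires carefully tracking the coherence isomorphisms $\gamma_B$ (the homotopy $\id_B\otimes\id_\cD\otimes\one_\cD\simeq$ iso $B\prot\cD\map B\prot\cD\prot\cD$) and the tensor-flip $\tau_\cD\simeq\id_\cD$ through the composition formula in Proposition \ref{DStab}, and verifying that $F$ — being homotopy invariant and $\cD$-stable — sends all of these to identities (or to the canonical isomorphisms) in $\cC$. In particular one must confirm that the two $\cD$-stability isomorphisms $F(B)\cong F(B\prot\cD)$ arising from the two coordinates (via $\id_B\otimes\one_\cD$ versus via a flip) agree after using homotopy invariance, which is exactly where the homotopy $\tau_\cD\simeq\id$ enters. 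A clean way to organize this is to first establish that $\Csep\functor\h\iCsep[\cD^{-1}]$ exhibits the target as a localization of $\Csep$ (as an ordinary category) at the union of homotopies and the $\cD$-stabilization maps $\{A\map A\prot\cD\}$, and then invoke the universal property of ordinary localization; Proposition \ref{DStab} is precisely the computation that makes this localization explicit, so the theorem becomes a formal consequence once that identification of Hom-sets is upgraded to an identification of categories.
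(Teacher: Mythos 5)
Your proof is correct and follows essentially the same route as the paper: both rely on Proposition~\ref{DStab} to identify $\h\iCsep[\cD^{-1}](L_\cD(A),L_\cD(B))\cong[A,B\prot\cD]$, define $\bar F$ on morphisms by applying $F$ and composing with the $\cD$-stability isomorphism $F(B)\cong F(B\prot\cD)$, and deduce uniqueness from essential surjectivity of $L_\cD$. The only genuine (and arguably cleaner) variation is in verifying that $L_\cD$ itself is $\cD$-stable: you invoke idempotency of $\cD$ (Lemma~\ref{SSAidem}) directly to see that $L_\cD(A\to A\prot\cD)$ is already an equivalence in $\iCsep[\cD^{-1}]$, whereas the paper runs a Yoneda-type argument showing the induced map is a bijection on Hom-sets in each variable; your reformulation via ordinary localization at homotopies together with $\{A\to A\prot\cD\}$ is also a sensible way to package the coherence bookkeeping, which the paper leaves implicit.
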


\begin{proof}
 Let us first show that functor $L_\cD$ is homotopy invariant and $\cD$-stable. It is easy to verify that it is homotopy invariant. It follows from the arguments in the proof of Proposition \ref{DStab} that the map $\h\iCsep[\cD^{-1}](L_\cD (A\prot\cD),L_\cD(B))\map\h\iCsep[\cD^{-1}](L_\cD (A),L_\cD(B))$ induced by $A\map A\prot\cD$ is an isomorphism for all $B\in\Csep$. For any $B\in\Csep$ the map $$\h\iCsep[\cD^{-1}](L_\cD(B),L_\cD (A))\map\h\iCsep[\cD^{-1}](L_\cD(B), L_\cD (A\prot\cD))$$ is equivalent to that map $[B,A\prot\cD]\map[B,A\prot\cD\prot\cD]$ once again by Proposition \ref{DStab}. This map is induced by $A\prot\cD\map A\prot\cD\prot\cD$ sending $a\otimes d \mapsto a\otimes\one_\cD\otimes d$. Since $\cD$ is strongly self-absorbing one easily sees $[B,A\prot\cD]\map[B,A\prot\cD\prot\cD]$ is an isomorphism. Since $L_\cD$ is surjective on objects we conclude that $L_\cD$ is $\cD$-stable.
 
 Let $F_i:\h\iCsep[\cD^{-1}]\functor\cC$ with $i=1,2$ be two functors making the following diagram commute
 
 \beq \label{exist}
 \xymatrix{
 \Csep\ar[rr]^{L_\cD}\ar[rd]_{F} && \h\iCsep[\cD^{-1}]\ar@{-->}[ld]^{F_i}\\
 & \cC.
 }
 \eeq On objects they are both determined by $\cD$-stability $F_i(A\prot\cD)\cong F(A\prot\cD)\cong F(A)$. Similarly, on each morphism $\phi: A\prot\cD\map B\prot\cD$ the value of $F_i(\phi)$ is uniquely determined by the following diagram:
 
 \beqn
 \xymatrix{
F_i(A\prot\cD) \ar[r]^{F_i(\phi)} & F_i (B\prot\cD)\\
F(A)\ar[r]^{F(\phi)}\ar[u]^{\cong} & F(B) \ar[u]_{\cong}.
}
 \eeqn
 
 For the existence note that for any homotopy invariant and $\cD$-stable functor $F:\Csep\functor\cC$ there is a functor $\overline{F}:\h\iCsep[\cD^{-1}]\functor\cC$ sending $A\prot\cD$ to $F(A\prot\cD)\cong F(A)$ that makes the above diagram \eqref{exist} commute (up to a natural isomorphism).
\end{proof}

\section{Bivariant connective $\E$-theory and (co)localizations of $\iNSp$}
Let us remind the readers that the functor $\st:\iCsep^\op\functor\iNSp$ arises as a composition of the following functors $$\iCsep^\op\overset{j}{\functor}\iNS\overset{\Sigma^\infty}{\functor} \Sp(\iNS)\overset{L_S}{\functor}S^{-1}\Sp(\iNS)=\iNSp.$$ For any separable $C^*$-algebra $A$ one ought to regard $\st(A)$ as its suspension spectrum after localization with respect to $S$. In the sequel we suppress the functor $j$ from the notation and simply write $\Sigmat (A)$ in place of $\st(A)=\Sigmat\circ j(A)$ for any separable $C^*$-algebra $A$.

\begin{defn}
Let $\cC$ be a symmetric monoidal $\infty$-category with unit object $\one$. We say that a map $e:E\map\one$ exhibits $E$ as a {\em coidempotent object} in $\cC$ if the dual map $e^\op:\one\map E$ exhibits $E$ as an idempotent object in $\cC^\op$. 
\end{defn} Recall that the symmetric monoidal structure on $\cC$ endows $\cC^\op$ with a symmetric monoidal structure that is uniquely defined up to a contractible space of choices.

\begin{lem} \label{SSAcoidem}
 If $\cD$ is a strongly self-absorbing $C^*$-algebra, then $j(\cD)$ is a coidempotent object in $\iNS$. The same assertion holds for $\cpt$, i.e., $j(\cpt)$ is a coidempotent object in $\iNS$.
\end{lem}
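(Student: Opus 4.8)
The plan is to transport the idempotency statement of Lemma~\ref{SSAidem} across the Yoneda embedding $j:\iCsep^\op\functor\iNS$ by dualizing. Recall that by Proposition~\ref{closedSM} the functor $j$ is symmetric monoidal, and the symmetric monoidal structure on $\iNS$ restricts (on the image of $j$) to the one on $\iCsep^\op$ induced from $\prot$ on $\iCsep$. First I would record the following elementary observation about idempotent objects: if $F:\cC\functor\cC'$ is a symmetric monoidal functor of symmetric monoidal $\infty$-categories and $e:\one_\cC\functor E$ exhibits $E$ as an idempotent object in $\cC$, then $F(e):\one_{\cC'}\simeq F(\one_\cC)\functor F(E)$ exhibits $F(E)$ as an idempotent object in $\cC'$; this is immediate since $F$ carries the equivalence $\id_E\otimes e:E\simeq E\otimes\one_\cC\functor E\otimes E$ to the map $\id_{F(E)}\otimes F(e):F(E)\functor F(E)\otimes F(E)$ (using the symmetric monoidal coherence equivalences of $F$), which is therefore again an equivalence.

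Now I would apply this to $j:\iCsep^\op\functor\iNS$. By Lemma~\ref{SSAidem} the canonical unital $*$-homomorphism $\CC\functor\cD$, viewed as a map $\CC\functor\cD$ in $\iCsep^\op$ out of the unit object $\CC$ (which is the monoidal unit of $\iCsep^\op$ since $\CC$ is the unit for $\prot$), exhibits $\cD$ as an idempotent object in $\iCsep^\op$; similarly the map $\CC\functor\cpt$ sending $1\mapsto e_{11}$ exhibits $\cpt$ as an idempotent object in $\iCsep^\op$. Applying the symmetric monoidal functor $j$, we conclude that $j(\cD)$ (resp. $j(\cpt)$) is an idempotent object in $\iNS$, witnessed by a map $\one_{\iNS}\simeq j(\CC)\functor j(\cD)$ (resp. $\one_{\iNS}\functor j(\cpt)$). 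It remains only to reconcile this with the definition of coidempotent object: by Definition~\ref{susSpec}'s preamble, a map $e:E\functor\one$ exhibits $E$ as coidempotent in $\iNS$ exactly when $e^\op:\one\functor E$ exhibits $E$ as idempotent in $\iNS^\op$. But $j(\cD)$ and $j(\cpt)$ are idempotent \emph{in $\iNS$}, i.e. via a map \emph{out of} the unit, which is precisely the dual of the data required; unwinding the definitions, the map $j(\cD)\functor\one_{\iNS}$ dual to $\one_{\iNS}\functor j(\cD)$ in $\iNS^\op$ exhibits $j(\cD)$ as coidempotent in $\iNS$. The same applies verbatim to $\cpt$.

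The only genuine point requiring care — and the one I expect to be the main obstacle — is the bookkeeping of opposite categories: the statement is phrased in $\iNS$ using a map to the unit, while the underlying algebraic fact (Lemma~\ref{SSAidem}) lives in $\iCsep$, where the map goes from the unit $\CC$. One must be careful that $\iNS=\Ind(\iCsep^\op)$, so the Yoneda functor $j$ already incorporates one "op"; thus $j$ being symmetric monoidal means a unital map $\CC\functor\cD$ in $\iCsep$ becomes a map $\CC\functor\cD$ in $\iCsep^\op$ and hence, via $j$, a map $\one_{\iNS}\functor j(\cD)$ \emph{in $\iNS$}. Since $j(\cD)$ is then idempotent in $\iNS$ via this map from the unit, it is — by definition, reading the Definition in $\cC=\iNS$ with $\cC^\op=\iNS^\op$ — coidempotent in $\iNS$. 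Once the variances are tracked correctly there is no further content; the proof is a one-line consequence of Lemma~\ref{SSAidem}, Proposition~\ref{closedSM}, and the stability of idempotent objects under symmetric monoidal functors.
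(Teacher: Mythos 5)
Your overall route — transport Lemma~\ref{SSAidem} across the symmetric monoidal Yoneda embedding $j$ — is the same as the paper's, but the middle of your argument contains a variance error that propagates into a false intermediate claim.

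The mistake is the assertion that ``a unital map $\CC\functor\cD$ in $\iCsep$ becomes a map $\CC\functor\cD$ in $\iCsep^\op$.'' Passage to the opposite category reverses arrows: the idempotency witness $e:\CC\functor\cD$ in $\iCsep$ corresponds to $e^\op:\cD\functor\CC$ in $\iCsep^\op$, not to a map out of the unit. (Indeed, since $\cD$ is simple there are no nonzero $*$-homomorphisms $\cD\functor\CC$, so no nontrivial morphism $\CC\functor\cD$ in $\iCsep^\op$ exists at all.) From this false claim you deduce that $j(\cD)$ is \emph{idempotent} in $\iNS$ via a map $\one_{\iNS}\functor j(\cD)$, and then attempt to convert idempotent to coidempotent by ``unwinding the definitions.'' That conversion does not follow: by the paper's definition, $e:E\functor\one$ exhibits $E$ as coidempotent in $\cC$ exactly when $e^\op$ exhibits $E$ as idempotent in $\cC^\op$ --- so an object idempotent in $\cC$ is coidempotent in $\cC^\op$, not in $\cC$. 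These two notions are genuinely different and do not coincide in general.

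The repair is short and is what the paper does: since $e:\CC\functor\cD$ exhibits $\cD$ as idempotent in $\iCsep$, and $(\iCsep^\op)^\op=\iCsep$, the reversed map $e^\op:\cD\functor\CC$ exhibits $\cD$ as coidempotent in $\iCsep^\op$ directly from the definition. Symmetric monoidal functors preserve coidempotent objects (the dual of the preservation statement you correctly recorded for idempotent objects), and $j(\CC)\simeq\one_{\iNS}$, so $j(e^\op):j(\cD)\functor\one_{\iNS}$ exhibits $j(\cD)$ as coidempotent in $\iNS$. The same applies to $\cpt$. With the variance tracked correctly there is no need for the ad hoc ``reconciliation'' step, and the proof is the two-line argument you were aiming for.
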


\begin{proof}
 Let $X$ stand for $\cD$ or $\cpt$. Since $X$ is an idempotent object in $\iCsep$, it becomes a coidempotent object in $\iCsep^\op$. Consequently, $j(X)$ becomes a coidempotent object in $\iNS$ (since $j:\iCsep^\op\functor\iNS$ is a fully faithful symmetric monoidal functor).
\end{proof}

\begin{lem} \label{idem}
 For any strongly self-absorbing $C^*$-algebra $\cD$, the stabilization $\Sigmat(\cD)$ is a coidempotent object in $\iNSp$. The same assertion holds for $\cpt$, i.e., $\Sigmat (\cpt)$ is a coidempotent object in $\iNSp$.
\end{lem}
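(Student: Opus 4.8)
The plan is to transport the coidempotent structure along the symmetric monoidal functor $\Sigmat:\iNS\functor\iNSp$ constructed in Theorem \ref{SMNSp}, using Lemma \ref{SSAcoidem} as the input. First I would recall that $\Sigmat$ is symmetric monoidal (Theorem \ref{SMNSp}), hence its opposite $\Sigmat^\op:\iNS^\op\functor\iNSp^\op$ is symmetric monoidal as well (the symmetric monoidal structure on the opposite is the one described in Remark 2.4.2.7 of \cite{LurHigAlg}, compatible with taking opposites of symmetric monoidal functors). By Lemma \ref{SSAcoidem}, for $X$ equal to $\cD$ or $\cpt$ the object $j(X)$ is coidempotent in $\iNS$, i.e.\ the unit map $\one_{\iNS^\op}\map j(X)$ exhibits $j(X)$ as an idempotent object in $\iNS^\op$. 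Since a symmetric monoidal functor sends idempotent objects to idempotent objects (the condition $\id_E\otimes e:E\simeq E\otimes\one\map E\otimes E$ being an equivalence is preserved by any functor that commutes with $\otimes$ up to coherent equivalence and carries the unit to the unit), $\Sigmat^\op$ carries the idempotent object $j(X)\in\iNS^\op$ to the idempotent object $\Sigmat(j(X))=\Sigmat(X)\in\iNSp^\op$. Dualizing back, this says precisely that $\Sigmat(X)$ is coidempotent in $\iNSp$, which is the claim.

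Concretely, the key steps in order are: (1) observe $\Sigmat:\iNS\functor\iNSp$ is symmetric monoidal and colimit preserving (Theorem \ref{SMNSp}), and carries the unit $j(\CC)$ of $\iNS$ to the unit of $\iNSp$; (2) pass to opposite categories, noting $\Sigmat^\op$ is still symmetric monoidal; (3) invoke Lemma \ref{SSAcoidem} to get that $j(X)$ is an idempotent object of $\iNS^\op$ for $X\in\{\cD,\cpt\}$; (4) apply the general fact that symmetric monoidal functors preserve idempotent objects (cf.\ the discussion around Definition 4.8.2.1 of \cite{LurHigAlg}) to conclude $\Sigmat(X)$ is idempotent in $\iNSp^\op$; (5) unwind the definition of coidempotent object to restate this as: $\Sigmat(X)$ is coidempotent in $\iNSp$.

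I do not expect a serious obstacle here; the only point requiring a small amount of care is step (4), that the abstract property ``$e:\one\map E$ exhibits $E$ as idempotent'' is stable under symmetric monoidal functors. This follows because such a functor $F$ supplies a coherent natural equivalence $F(X)\otimes F(Y)\simeq F(X\otimes Y)$ together with $F(\one)\simeq\one$, so the map $\id_{F(E)}\otimes F(e):F(E)\otimes\one\map F(E)\otimes F(E)$ is identified, under these equivalences, with $F(\id_E\otimes e):F(E)\map F(E\otimes E)$, which is an equivalence because $\id_E\otimes e$ is. One could alternatively phrase the whole argument in terms of $\Sigmat$ itself rather than its opposite, by remarking that a coidempotent object of $\cC$ is the same as an idempotent object of $\cC^\op$ and that $\Sigmat$ being symmetric monoidal is equivalent to $\Sigmat^\op$ being so; either formulation is routine once Lemma \ref{SSAcoidem} and Theorem \ref{SMNSp} are in hand.
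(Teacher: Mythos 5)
Your proof is correct and is essentially the same argument as the paper's, which simply invokes the symmetric monoidality of $\Sigmat$ (Theorem \ref{SMNSp}) and Lemma \ref{SSAcoidem}. You have merely spelled out the routine verification that symmetric monoidal functors carry (co)idempotent objects to (co)idempotent objects, which the paper leaves implicit.
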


\begin{proof}
 Since $\Sigmat:\iNS\map\iNSp$ is symmetric monoidal (see Theorem \ref{SMNSp}), the assertion follows from the previous Lemma.
\end{proof}

\noindent
Recall that a functor $R:\cC\map\cC$ is called a {\em colocalization} if $R:\cC\map R\cC$ is the right adjoint to the inclusion $R\cC\subseteq\cC$; in particular, the inclusion is the left adjoint to $R$ and hence preserves all small colimits. Owing to the fact that $\iNSp$ is closed symmetric monoidal (see Theorem \ref{SMNSp}), one may consider the colimit preserving endofunctor $-\otimes \Sigmat (A):\iNSp\functor\iNSp$ for any $A\in\iCsep^\op$. Often such functors are colocalizations.

\begin{prop} \label{Col}
 Let $A$ be any strongly self-absorbing $C^*$-algebra $\cD$ or $\cpt$. Then the functors $R_A:\iNS\functor\iNS$ and $R_{\Sigmat (A)} : \iNSp\functor\iNSp$ given by $R_A(X)=X\otimes j(A)$ and $R_{\Sigmat (A)}(X)= X\otimes\Sigmat (A)$ respectively are colocalization functors.
\end{prop}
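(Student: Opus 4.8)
The two statements will be proven by the same formal argument, so the plan is to treat them uniformly: set $\cC=\iNS$ and $E=j(A)$ in the first case, and $\cC=\iNSp$ and $E=\Sigmat(A)$ in the second. In either case $\cC$ is a closed symmetric monoidal presentable $\infty$-category (Proposition \ref{closedSM}, Theorem \ref{SMNSp}), so the endofunctor $R_E:=-\otimes E$ preserves small colimits, as already observed in the paragraph preceding the Proposition. By Lemma \ref{SSAcoidem} (resp.\ Lemma \ref{idem}) the object $E$ is coidempotent in $\cC$; fix a map $e:E\functor\one$ witnessing this, i.e.\ such that $e^\op:\one\functor E$ exhibits $E$ as an \emph{idempotent} object of the symmetric monoidal $\infty$-category $\cC^\op$, where $\cC^\op$ carries the canonically induced monoidal structure (Remark 2.4.2.7 of \cite{LurHigAlg}).

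The plan is then to invoke, inside $\cC^\op$, the fact recalled in the Remark following Lemma \ref{SSAidem} (cf.\ Section 4.8.2 of \cite{LurHigAlg}): an idempotent object $E$ of a symmetric monoidal $\infty$-category gives rise to a smashing localization $-\otimes E$, whose local objects are precisely those $X$ for which $X\functor X\otimes E$ is an equivalence, and whose localization functor $-\otimes E$ is left adjoint to the inclusion of the local subcategory. Applying this to the idempotent object $e^\op:\one\functor E$ of $\cC^\op$ and then passing back to opposite categories, $R_E=(-\otimes E)^\op$ is an endofunctor of $\cC$ whose essential image $\cC_E$ is the full subcategory of objects $X$ with $\id_X\otimes e:X\otimes E\functor X$ an equivalence, and $R_E:\cC\functor\cC_E$ is \emph{right} adjoint to the inclusion $\cC_E\hookrightarrow\cC$. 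The inclusion, being the opposite of a right adjoint, is itself a left adjoint and hence preserves small colimits; this is exactly the assertion that $R_E$ is a colocalization functor in the sense of the recollection preceding the Proposition. Specializing $\cC=\iNS$, $E=j(A)$ and $\cC=\iNSp$, $E=\Sigmat(A)$ gives the statements for $R_A$ and $R_{\Sigmat(A)}$ respectively; since $\cC$ is presentable and $R_E$ is colimit preserving, this colocalization is moreover accessible.

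The argument is essentially formal, and I expect the only thing requiring real care to be the dictionary between $\cC$ and $\cC^\op$: one must check that the monoidal structure with respect to which $E$ is declared coidempotent is the canonical one on $\cC^\op$, and that "localization functor on $\cC^\op$" unwinds to exactly "colocalization functor on $\cC$" as defined in the paper (in particular that the counit $\id_{R_E}\otimes e:R_E\functor\id_\cC$ dualizes the localization unit). If one prefers to avoid the opposite category altogether, the same content can be established directly: coidempotency of $E$ supplies a canonical equivalence $R_E\circ R_E\simeq R_E$, the natural transformation $\id_{R_E}\otimes e:R_E\functor\id_\cC$ is the candidate counit, and one verifies on mapping spaces (using the internal hom of $\cC$) that it presents $R_EX$ as the colocalization of $X$ relative to $\cC_E$ — which is precisely the proof of the idempotent-object statement of \cite{LurHigAlg} read in $\cC^\op$. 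I do not expect any genuine obstacle beyond this bookkeeping.
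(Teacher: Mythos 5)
Your argument is correct and is precisely the paper's proof, just unwound: the paper simply cites the dual of Proposition 4.8.2.4 of \cite{LurHigAlg}, and your passage to $\cC^\op$, application of the idempotent-object-gives-smashing-localization statement there, and translation back to colocalization data on $\cC$ is exactly what that citation amounts to. The bookkeeping you flag (that the induced monoidal structure on $\cC^\op$ is the one with respect to which $E$ is coidempotent, and that localization in $\cC^\op$ dualizes to colocalization as defined in the paper) is indeed the only thing to check, and you have checked it correctly.
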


\begin{proof}
The assertions follow from the dual of Proposition 4.8.2.4 of \cite{LurHigAlg}. 
\end{proof}

\begin{dis} \label{warning}
In \cite{MyNGH} the author called the groups $\NSH(\CC,-)$ (resp. $\NSH(-,\CC)$) the noncommutative stable homotopy (resp. noncommutative stable cohomotopy) groups. The terminology was motivated by the fact that $\NSH(\CC,-)$ is covariant and $\NSH(-,\CC)$ is contravariant. However, it was observed in \cite{MyNGH} that $\NSH(\CC,-)$ generalizes stable cohomotopy, whereas $\NSH(-,\CC)$ generalizes stable homotopy of finite pointed CW complexes. In order to align the theory with the terminology familiar to topologists, we rename them following Definition 3.2 of \cite{MyNSH} as follows: 
\beqn
\NSH(\CC,-) &=& \text{ noncommutative stable cohomotopy}\\
\NSH(-,\CC) &=& \text{ noncommutative stable homotopy}
\eeqn We also extend the terminology predictably to their graded versions.
\end{dis}

\subsection{(Co)localizations and purely infinite strongly self absorbing $C^*$-algebras}
The list of known examples of strongly self-absorbing $C^*$-algebras is rather limited. The list includes Cuntz algebras $\mathcal{O}_2$ and $\Oinf$, the Jiang--Su algebra $\mathcal{Z}$, UHF algebras of infinite type, and tensor products of $\Oinf$ with UHF algebras of infinite type. It follows from the results of Kirchberg that strongly self-absorbing $C^*$-algebras are either stably finite or purely infinite. In the purely infinite case Toms--Winter completely classified all strongly self-absorbing $C^*$-algebras satisfying UCT (Corollary page 4022 \cite{TomWin}), viz., they are $\mathcal{O}_2$, $\Oinf$ and tensor products of $\Oinf$ with UHF algebras of infinite type. We are particularly interested in the purely infinite ones since $ax+b$-semigroup $C^*$-algebras of number rings are all purely infinite (Corollary 8.2.11 of \cite{CunEchLi1}). Among the strongly self-absorbing purely infinite $C^*$-algebras $\Oinf$ plays a distinguished role in the classification program. The $C^*$-algebra $A\prot\Oinf$ is purely infinite for any $A\in\Csep$ \cite{KirRor}. Deviating slightly from the predictable pattern the colocalization of $\iNSp$ by the functor $R_{\Sigmat (\cD)}(-)=-\otimes\Sigmat (\cD)$ is denoted by $\iNSp[\cD^{-1}]$ (and not by $\iNSp[(\Sigmat (\cD))^{-1}]$). In what follows we are going to drop the object $\Sigmat (\cD)$ from the colocalization functor $R_{\Sigmat (\cD)}$ and denote it simply by $R$.

Thanks to Proposition \ref{Col} above one can study colocalizations of both $\iNS$ and $\iNSp$ with respect to a strongly self-absorbing $C^*$-algebra $\cD$ or $\cpt$. Recall that bivariant $\E$-theory is a bivariant homology theory of separable $C^*$-algebras that agrees with $\KK$-theory for all nuclear $C^*$-algebras. Hence it is considered to be quite computable.

\begin{prop} \label{Ksummand}
Let $A,B\in\Csep$ and $\cD$ be any purely infinite strongly self-absorbing $C^*$-algebra satisfying UCT. Then the bivariant noncommutative stable homotopy group $\hNSp[\cD^{-1}](R(\Sigmat (A)),R(\Sigmat (B)))$ contains $\E_0(B\prot\cD,A\prot\cD)$ as a natural summand.
\end{prop}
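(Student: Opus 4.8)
The plan is to exhibit $\E_0(B\prot\cD, A\prot\cD)$ as a retract of the bivariant group $\hNSp[\cD^{-1}](R(\Sigmat(A)), R(\Sigmat(B)))$ by producing a pair of maps whose composite in the appropriate direction is the identity. The starting point is the observation that, since $R$ is a colocalization (Proposition \ref{Col}), the colocalized mapping groups compute as
\[
\hNSp[\cD^{-1}](R(\Sigmat(A)), R(\Sigmat(B))) \cong \h\iNSp(\Sigmat(A), \Sigmat(B)\otimes\Sigmat(\cD)) \cong \h\iNSp(\Sigmat(A), \Sigmat(B\prot\cD)),
\]
the last identification using that $\Sigmat\circ j$ is symmetric monoidal (Theorem \ref{SMNSp}), so $\Sigmat(B)\otimes\Sigmat(\cD)\simeq\Sigmat(B\prot\cD)$. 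Thus the target group is just a noncommutative stable homotopy group $\hNSp(\Sigmat(A),\Sigmat(B\prot\cD))$, i.e. an $\NSH^\op$-type group between the $C^*$-algebras $A$ and $B\prot\cD$; because $B\prot\cD$ is $\cD$-stable, by Proposition \ref{DStab}-style reasoning one may even replace $A$ by $A\prot\cD$ without changing the group.

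Next I would invoke the universal property of $\NSH$ (noncommutative stable homotopy) as the receptacle of the universal triangulated/excisive homology theory on separable $C^*$-algebras, together with the universal property of bivariant $\E$-theory: there is a canonical functor $\NSH^\op \functor \E$ realizing $\E$-theory as a localization of noncommutative stable homotopy. Concretely this gives a natural transformation
\[
\Phi\colon \hNSp(\Sigmat(A), \Sigmat(B\prot\cD)) \longrightarrow \E_0(A, B\prot\cD) \cong \E_0(A\prot\cD, B\prot\cD),
\]
where the last isomorphism uses $\cD$-stability of $\E$-theory (valid for any strongly self-absorbing $\cD$). So $\Phi$ is our candidate projection. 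The content of the proposition is to construct a natural splitting $\Psi$ with $\Phi\circ\Psi = \id$ on $\E_0(B\prot\cD, A\prot\cD)$ — and here is where the hypotheses that $\cD$ is purely infinite and satisfies UCT enter decisively.

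The key input is the Kirchberg--Phillips classification theory: for $A\prot\cD$ and $B\prot\cD$, which are $\Oinf$-absorbing (strongly purely infinite) and satisfy the UCT since $\cD$ does, the group $\E_0(B\prot\cD, A\prot\cD)$ is realized by genuine (homotopy classes of) $*$-homomorphisms — more precisely, $\KK_0(B\prot\cD, A\prot\cD \prot\cpt)$ is computed by asymptotic/homotopy classes of $*$-homomorphisms $B\prot\cD \functor A\prot\cD\prot\cpt$ via Kirchberg's $\cO_\infty$-stable classification, and every such $*$-homomorphism defines a class in the noncommutative stable homotopy group. This yields a map $\Psi\colon \E_0(B\prot\cD, A\prot\cD) \functor \hNSp(\Sigmat(A\prot\cD), \Sigmat(B\prot\cD))$ sending a classifying morphism to its suspension spectrum class; one then checks that applying $\Phi$ recovers the original $\E$-theory class, because $\Phi$ is induced by the very localization functor $\NSH^\op\functor\E$ under which a $*$-homomorphism maps to its own $\E$-theory class. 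Naturality in $A$ and $B$ is inherited from naturality of $\Sigmat$, of $\Phi$, and of the Kirchberg classification isomorphism.

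I expect the main obstacle to be the rigorous construction of $\Psi$ and the verification $\Phi\circ\Psi=\id$: one must be careful that the classification isomorphism is genuinely implemented by $*$-homomorphisms (not merely asymptotic morphisms) after stabilizing by $\cpt$, that the passage $*$-homomorphism $\mapsto$ suspension spectrum class lands in the correct colocalized group, and that these identifications are compatible — i.e. that the composite $\Phi\circ\Psi$ literally equals the identity on the nose rather than up to some automorphism. The functoriality/naturality bookkeeping across the chain $R$-colocalization $\rightsquigarrow$ $\Sigmat$ symmetric monoidality $\rightsquigarrow$ $\NSH^\op\to\E$ localization $\rightsquigarrow$ Kirchberg classification is routine but must be assembled coherently. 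Everything else (the colocalization identification, the $\cD$-stability reductions) is formal given the results already established in the paper.
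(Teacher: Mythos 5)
Your overall reduction is on track: you correctly reduce the statement to exhibiting $\E_0(B\prot\cD, A\prot\cD)$ as a retract of $\NSH(B\prot\cD, A\prot\cD)$, and you correctly identify the projection $\Phi$ as coming from the universal localization $\NSH\to\E$. The gap is in your construction of the section $\Psi$, and it is a real one.

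Your $\Psi$ is supposed to come from Kirchberg--Phillips classification / Kirchberg's lifting theory for $\Oinf$-stable algebras, realizing $\E_0$-classes by genuine $*$-homomorphisms after $\cpt$-stabilization. But those results require nuclearity of (at least the source) algebras, and often simplicity; here $A$ and $B$ are \emph{arbitrary} separable $C^*$-algebras, and tensoring with $\cD$ does not make them nuclear. Relatedly, your assertion that $A\prot\cD$ and $B\prot\cD$ ``satisfy the UCT since $\cD$ does'' is false in general: the UCT is not inherited by $A$ from $\cD$ under $\prot\cD$. So the classification machinery you want to invoke simply does not apply in the stated generality, and the construction of $\Psi$ breaks down. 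The paper's hypothesis that $\cD$ satisfies UCT is used much more modestly, only to check that one explicit map \emph{internal to $\cD$} is an $\E$-equivalence.

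The paper's route avoids classification theory entirely. After the same reduction you made, it produces the splitting from an explicit chain of $*$-homomorphisms $\cpt\overset{i}{\to}\cD\overset{\theta}{\to}\cD\prot\cpt$: pure infiniteness of $\cD$ gives a unital embedding $\Oinf\hookrightarrow\cD$, hence a $*$-homomorphism $i\colon\cpt\to\cD$ with $e_{ij}\mapsto\iota(s_i)\iota(s_j)^*$, and $\theta$ is the corner embedding. Tensoring with $A\prot\cD$ and applying $\NSH(B\prot\cD,-)$, one uses Thom's Theorem 4.1.1 that $\NSH(E,F)\to\E_0(E,F)$ is an isomorphism whenever $F$ is \emph{stable}; this identifies the two outer terms with $\E_0$-groups directly (no lifting of classes to morphisms is ever required). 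The UCT hypothesis on $\cD$ shows the composite $\cD\prot\cpt\to\cD\prot\cD\to\cD\prot\cD\prot\cpt$ is an $\E$-equivalence, so after tensoring with $A$ the composite in the displayed sequence is an isomorphism, making $\theta$ split surjective; then the self-absorption $\cD\cong\cD\prot\cD$ and $C^*$-stability of $\E$-theory finish the identification of the summand. If you want to repair your argument, the right move is to drop the appeal to Kirchberg--Phillips and instead build the section from a single explicit $*$-homomorphism $\cpt\to\cD$ together with Thom's stable-target comparison theorem, as above.
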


\begin{proof}
By construction there is a natural identification $$\hNSp[\cD^{-1}](R(\Sigmat (A)),R(\Sigmat (B)))\cong\hNSp(\Sigmat (A\prot\cD),\Sigmat (B\prot\cD)),$$ where we used the fact that $\Sigmat:\h\iCsep^\op\functor\hNSp$ is symmetric monoidal (see Theorem \ref{SMNSp}). Using Theorem 2.26 of \cite{MyNSH} we also deduce that $$\hNSp(\Sigmat (A\prot\cD),\Sigmat (B\prot\cD))\cong\NSH (B\prot\cD, A\prot\cD).$$ Thus it suffices to show that $\NSH(B\prot\cD, A\prot\cD)$ contains $\E_0(B\prot\cD,A\prot\cD)$ as a summand. Now consider the canonical composition of $*$-homomorphisms $\cpt\overset{i}{\map}\cD\overset{\theta}{\map}\cD\prot\cpt$. Note than any purely infinite strongly self-absorbing $C^*$-algebra $\cD$ admits a unital embedding $\Oinf\overset{\iota}{\hookrightarrow}\cD$ \cite{WinZStable}. Here $i:\cpt\map\cD$ maps $e_{ij}$ to $\iota(s_i)\iota(s_j)^*$, where $\{s_i\}_{i\in\NN}$ is the standard set of generators of $\Oinf$, and $\theta:\cD\map\cD\prot\cpt$ is the corner embedding $a\mapsto a\otimes e_{11}$. Tensoring the diagram with $A\prot\cD$ and applying $\NSH(B\prot\cD,-)$ leads to the following diagram \beqn \NSH(B\prot\cD,A\prot\cD\prot\cpt)\overset{i}{\map}\NSH(B\prot\cD,A\prot\cD\prot\cD)\overset{\theta}{\map}\NSH(B\prot\cD,A\prot\cD\prot\cD\prot\cpt).\eeqn

Observe that for any $E,F\in\Csep$ there is a natural map $\NSH(E,F)\map\E_0(E,F)$, which becomes an isomorphism as soon as $F$ is stable (see Theorem 4.1.1. of \cite{ThomThesis}; also \cite{HigETh}). Therefore, the above diagram can be naturally identified with \beq \label{split} \E_0(B\prot\cD,A\prot\cD\prot\cpt)\overset{i}{\map}\NSH(B\prot\cD,A\prot\cD\prot\cD)\overset{\theta}{\map}\E_0(B\prot\cD,A\prot\cD\prot\cD\prot\cpt).\eeq Since $\cD$ satisfies UCT, the composition $\cD\prot\cpt\overset{i}{\map}\cD\prot\cD\overset{\theta}{\map}\cD\prot\cD\prot\cpt$ produces a $\E$-equivalence. Therefore, the composition $A\prot\cD\prot\cpt\overset{i}{\map}A\prot\cD\prot\cD\overset{\theta}{\map}A\prot\cD\prot\cD\prot\cpt$ is also an $\E$-equivalence whence the composition in diagram \eqref{split} is an isomorphism. Using the self-absorbing property of $\cD$, i.e., $\cD\cong\cD\prot\cD$, we conclude that the map $$\NSH(B\prot\cD,A\prot\cD)\cong\NSH(B\prot\cD,A\prot\cD\prot\cD)\overset{\theta}{\map}\E_0(B\prot\cD,A\prot\cD\prot\cD\prot\cpt)\cong\E_0(B\prot\cD,A\prot\cD\prot\cpt)$$ is split surjective. Finally, due to $C^*$-stability of bivariant $\E$-theory we may naturally identify $\E_0(B\prot\cD,A\prot\cD\prot\cpt)\cong\E_0(B\prot\cD,A\prot\cD)$, i.e., $\E_0(B\prot\cD,A\prot\cD)$ is a split summand of $$\NSH (B\prot\cD, A\prot\cD)\cong\hNSp[\cD^{-1}](R(\Sigmat (A)),R(\Sigmat (B))).$$
\end{proof}

\begin{cor} \label{KsummandCor}
The proof of Proposition \ref{Ksummand} actually shows that the noncommutative stable cohomotopy groups of any $\cD$-stable separable $C^*$-algebra contains its topological $\K$-theory groups as natural summands. If $\cD=\Oinf$ then one may replace $\E_0(B\prot\cD,A\prot\cD)$ in the above Proposition by $\E_0(B,A)$ due to $\Oinf$-stability of bivariant $\E$-theory in both variables. 
\end{cor}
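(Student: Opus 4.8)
The plan is to revisit the proof of Proposition~\ref{Ksummand} and extract the one structural fact driving it: there the first argument of $\NSH(-,-)$ occurs only as $\NSH(B\prot\cD,-)$ applied to a diagram of \emph{stable} $C^*$-algebras, and nowhere is the special form $B\prot\cD$ used — any $C^*$-algebra may sit in that slot. Granting this, fix a $\cD$-stable separable $C^*$-algebra $E$, so $E\simeq E\prot\cD$; tensor the canonical composition $\cpt\overset{i}{\map}\cD\overset{\theta}{\map}\cD\prot\cpt$ with $E$ and apply $\NSH(\CC,-)$ to get
\[
\NSH(\CC,E\prot\cpt)\map\NSH(\CC,E\prot\cD)\map\NSH(\CC,E\prot\cD\prot\cpt).
\]
Arguing exactly as in Proposition~\ref{Ksummand}: the two outer terms are $\E_0(\CC,-)$-groups, since $E\prot\cpt$ and $E\prot\cD\prot\cpt$ are stable; the composite $E\prot\cpt\map E\prot\cD\prot\cpt$ equals $\id_E\prot(\theta\circ i)$, which — using $E\simeq E\prot\cD$ and reordering tensor factors — is identified with $\big((\theta\circ i)\prot\id_{\cD}\big)\prot\id_E$, hence an $\E$-equivalence because $(\theta\circ i)\prot\id_{\cD}$ is one by the UCT and $-\prot E$ preserves $\E$-equivalences; therefore the right-hand map above, whose target is identified with $\E_0(\CC,E\prot\cD\prot\cpt)$, is split surjective. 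Combining this with $\NSH(\CC,E)\cong\NSH(\CC,E\prot\cD)$ ($\cD$-stability) and $\E_0(\CC,E\prot\cD\prot\cpt)\cong\E_0(\CC,E)$ ($C^*$-stability of bivariant $\E$-theory together with $\cD\simeq\cD\prot\cD$) exhibits $\E_0(\CC,E)$ as a direct summand of $\NSH(\CC,E)$.

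To finish the first assertion I would invoke the standard identification $\E_0(\CC,E)\cong\K_0(E)$ of bivariant $\E$-theory in the unit variable with topological $\K$-theory; and since the splitting produced above is assembled from maps commuting with suspension, it promotes to the graded level, so the graded noncommutative stable cohomotopy $\NSH_\ast(\CC,E)$ contains the graded topological $\K$-theory $\K_\ast(E)$ as a natural summand.

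The last assertion follows by combining Proposition~\ref{Ksummand} with the $\Oinf$-stability of bivariant $\E$-theory: the unital embedding $\CC\hookrightarrow\Oinf$ — equivalently each map $A\map A\prot\Oinf$, $a\mapsto a\otimes 1$ — is an $\E$-equivalence, so applying this first in the second and then in the first variable gives $\E_0(B\prot\Oinf,A\prot\Oinf)\cong\E_0(B\prot\Oinf,A)\cong\E_0(B,A)$. Substituting into Proposition~\ref{Ksummand} with $\cD=\Oinf$ then shows that $\hNSp[\Oinf^{-1}](R(\Sigmat(A)),R(\Sigmat(B)))$ contains $\E_0(B,A)$ as a natural summand.

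The step I expect to require the most care is not any individual computation but the naturality bookkeeping: one must check that each ingredient reused from the proof of Proposition~\ref{Ksummand} — the natural comparison transformation $\NSH(-,F)\map\E_0(-,F)$ for stable $F$, the isomorphism induced by $\theta$, $C^*$-stability, and $\cD$-self-absorption — is natural in $E$ over the category of $\cD$-stable $C^*$-algebras and compatible with the suspension isomorphisms, so that the resulting direct sum decomposition is itself natural and graded. This is routine but is the only point that genuinely needs to be written out.
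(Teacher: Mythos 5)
Your proof is correct and follows exactly the route the paper intends: you correctly identify that the first argument of $\NSH(-,-)$ plays only a passive role in the proof of Proposition~\ref{Ksummand}, so replacing $B\prot\cD$ by $\CC$ and the $\cD$-stable algebra $A\prot\cD$ by a general $\cD$-stable $E$ reproduces the argument verbatim, after which $\E_0(\CC,E)\cong\K_0(E)$ and suspension give the graded statement; the $\Oinf$-stability of $\E$-theory in each variable handles the second assertion. This is precisely what the paper means by "the proof of Proposition~\ref{Ksummand} actually shows," and no separate proof is given there, so your reconstruction supplies exactly what was left implicit.
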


\begin{rem} \label{Einfty}
Recall from Lemma \ref{idem} that $\Sigmat(\cpt)$ is a coidempotent object in $\iNSp$. Thus one may construct the smashing colocalization $R_{\cpt}:\iNSp\functor\iNSp[\cpt^{-1}]$. We set $\mathtt{E_\infty}:=\iNSp[\cpt^{-1}]^\op$ and it is our proposed stable $\infty$-categorical model for bivariant $\E$-theory. Arguments parallel to those in Section 2 of \cite{MyColoc} will show that $\iNSp[\cpt^{-1}]$ is a closed symmetric monoidal and compactly generated stable $\infty$-category; moreover, there is an exact fully faithful functor from the bivariant $\E$-theory category of separable $C^*$-algebras to $\h\mathtt{E_\infty}$.
\end{rem}

\subsection{Bivariant connective $\E$-theory}
Earlier we had outlined the construction of bivariant connective $\E$-theory in the setting of $\infty$-categories (see Remark 2.29 of \cite{MyNSH}). We furnish the details here. Consider the set of maps $X=\{M_2(A)\map A\,|\, A\in\iCsep^\op\}$ induced by the corner embeddings in $\iCsep^\op$. In \cite{ThomThesis} Thom constructed the bivariant connective $\E$-theory category as the Verdier quotient $\NSH\functor\NSH[(X^\op)^{-1}]$. Following \cite{ThomThesis} we denote the Verdier quotient, which is the bivariant connective $\E$-theory category, by $\bu$. There is a symmetric monoidal colimit preserving suspension spectrum functor $\Sigmat:\iNS\functor\iNSp$ (see Theorem \ref{SMNSp}). Via $\iCsep^\op\overset{j}{\hookrightarrow}\iNS\overset{\Sigmat}{\functor}\iNSp$ from $X$ we obtain a set of maps $X'$ between compact objects in $\iNSp$.

\begin{defn}\label{bu} We denote the opposite of the codomain of the accessible localization $\iNSp\overset{L_{X'}}{\map} X'^{-1}\iNSp$ by $\Ecn$ and this is the stable $\infty$-categorical version of bivariant connective $\E$-theory \cite{ThomThesis}. We denote the composite functor $\iNS^\op\overset{(\Sigmat)^\op}{\functor}{\iNSp^\op}\overset{(L_{X'})^\op}{\functor}\Ecn$ by $\kc$. See Theorem \ref{buTop} and Example \ref{buName} below for a justificaton of this terminology.\end{defn}

\begin{rem} \label{ECtoE}
 There is a canonical functor $X'^{-1}\iNSp\functor\iNSp[\cpt^{-1}]$ owing to the fact that the colocalization $R_{\cpt}:\iNSp\functor\iNSp[\cpt^{-1}]$ is a colimit preserving functor between presentable $\infty$-categories that sends the maps in $X'$ to equivalences. Taking the opposite of the functor $X'^{-1}\iNSp\functor\iNSp[\cpt^{-1}]$ we get a canonical functor $\Ecn\functor\mathtt{E_\infty}$ (see Remark \ref{Einfty}). 
\end{rem}

\begin{prop}
 The localization $\iNSp\overset{L_{X'}}{\map} (\Ecn)^\op$ is a symmetric monoidal colimit preserving functor between symmetric monoidal and compactly generated stable $\infty$-categories.
\end{prop}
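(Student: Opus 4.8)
The plan is to follow the same template used in the proof of Theorem \ref{SMNSp}, reducing the claim to the fact that the localization $L_{X'}$ is compatible with the symmetric monoidal structure on $\iNSp$ in the sense of Proposition 2.2.1.9 of \cite{LurHigAlg}. Concretely, one must check that if $f\colon U\map V$ is an $L_{X'}$-equivalence in $\iNSp$, then $f\otimes\id_W\colon U\otimes W\map V\otimes W$ is again an $L_{X'}$-equivalence for every $W\in\iNSp$. Since $\iNSp$ is compactly generated by $\Sigmat(\iCsep^\op)$ and $\otimes$ preserves colimits in each variable (Theorem \ref{SMNSp}), it suffices to treat the case where $W=\Sigmat(C)$ for $C\in\Csep$ and where $f$ ranges over the generating maps, i.e.\ $f=\Sigmat(\phi)$ for $\phi\colon M_2(A)\map A$ a corner embedding in $\iCsep^\op$ (here one invokes the small-generation and strong-saturation formalism, Proposition 5.7 of \cite{BluGepTab}, exactly as in Theorem \ref{SMNSp}, to reduce from the strongly saturated class $X'$ to the generating set).

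The key algebraic input is then that tensoring a corner embedding with a fixed $C^*$-algebra is again (homotopic to) a corner embedding. Precisely, for the corner embedding $\iota_A\colon A\map M_2(A)$, $a\mapsto \mathrm{diag}(a,0)$, one has a canonical identification $M_2(A)\prot C\cong M_2(A\prot C)$ under which $\iota_A\prot\id_C$ corresponds to $\iota_{A\prot C}\colon A\prot C\map M_2(A\prot C)$. Passing to $\iCsep^\op$ and then applying the symmetric monoidal functor $\Sigmat\circ j$, this shows that $\Sigmat(\phi)\otimes\Sigmat(C)$ is the image under $\Sigmat$ of the corner embedding associated with $A\prot C$, hence already lies in the generating set $X'$ and is in particular an $L_{X'}$-equivalence. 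By Theorem 2.26 of \cite{MyNSH} it is enough to verify this compatibility on the level of the generating maps coming from $\Csep$, which is the content of the preceding sentence. Once $L_{X'}$ is known to be compatible with $\otimes$, Proposition 2.2.1.9 and Example 2.2.1.7 of \cite{LurHigAlg} endow $X'^{-1}\iNSp=(\Ecn)^\op$ with a symmetric monoidal structure making $L_{X'}$ symmetric monoidal, and $L_{X'}$ is colimit preserving because it is a left adjoint in an accessible localization.

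It remains to record that $(\Ecn)^\op$ is compactly generated and closed symmetric monoidal. Compact generation follows because $X'^{-1}\iNSp$ is an accessible localization of the compactly generated $\iNSp$ at a set of maps between compact objects, so by Proposition 5.6 and the discussion around Proposition 5.7 of \cite{BluGepTab} it is again compactly generated (the generators being the images under $L_{X'}$ of the compact generators $\Sigmat(\iCsep^\op)$). Closedness of the monoidal structure on the presentable $\infty$-category $X'^{-1}\iNSp$ is automatic once $\otimes$ preserves colimits separately in each variable, which is inherited from $\iNSp$ through the symmetric monoidal localization functor (a left adjoint composed with colimit preserving tensor). I expect the only nonroutine point to be the careful bookkeeping reducing the $L_{X'}$-compatibility from the strongly saturated class $X'$ to its generating set of corner embeddings — i.e.\ justifying the interchange of "$\otimes$ with a fixed object" and "passing to the saturated closure" — but this is handled verbatim as in the proof of Theorem \ref{SMNSp} using Proposition 5.7 of \cite{BluGepTab} and the exactness of $\prot$.
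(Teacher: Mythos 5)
Your proposal is correct and follows essentially the same route as the paper: both reduce the symmetric monoidal compatibility of $L_{X'}$ to the generating corner embeddings via compact generation and strong saturation, and both rest on the identification $M_2(A)\prot C\cong M_2(A\prot C)$ so that a generating map tensored with $\Sigmat(C)$ again lies in $X'$. The paper compresses the surrounding formal reduction by referring to the proof of Theorem 1.4 of \cite{MyColoc}, which you spell out explicitly, but the arguments coincide.
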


\begin{proof}
 Since $\iNSp$ is compactly generated and $L_{X'}$ is an accessible localization, such that the domains and codomains of the maps in $X'$ are all compact, the stable $\infty$-category $\Ecn$ is compactly generated and $L_{X'}$ is colimit preserving. Moreover, $\iNSp$ is a closed symmetric monoidal $\infty$-category (see Theorem \ref{SMNSp}). Thus it suffices to show that $L_{X'}$ is a symmetric monoidal localization. For every $\Sigmat(M_2(A))\map \Sigmat(A)$ in $X'$ the map $\Sigmat(M_2(A))\otimes\Sigmat(Y)\map\Sigmat(A)\otimes\Sigmat(Y)$ belongs to the strongly saturated collection of morphisms in $\iNSp$ generated by $X'$. Arguing as in the proof of Theorem 1.4 of \cite{MyColoc} one deduces that $L_{X'}$ is a symmetric monoidal functor between closed symmetric monoidal stable $\infty$-categories.
\end{proof}

\begin{cor} \label{SMbu}
The functor $\kc=(L_{X'})^\op\circ(\Sigmat)^\op:\iNS^\op\functor\Ecn$ is symmetric monoidal. 
\end{cor}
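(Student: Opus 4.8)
The goal is to show that $\kc = (L_{X'})^\op \circ (\Sigmat)^\op : \iNS^\op \functor \Ecn$ is symmetric monoidal. This should follow almost immediately from what has been established: $\Sigmat : \iNS \functor \iNSp$ is symmetric monoidal (Theorem \ref{SMNSp}), and the preceding proposition shows $L_{X'} : \iNSp \functor (\Ecn)^\op$ is symmetric monoidal (and colimit preserving). So the plan is to compose these and then pass to opposite categories, checking that passing to opposites preserves the symmetric monoidal property of a functor.

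First I would recall that $\iNS$, $\iNSp$, and $(\Ecn)^\op$ are symmetric monoidal $\infty$-categories, and that a symmetric monoidal functor between symmetric monoidal $\infty$-categories $F : \cC \functor \cD$ induces a symmetric monoidal functor $F^\op : \cC^\op \functor \cD^\op$ with respect to the induced (opposite) symmetric monoidal structures; this is the dual of Remark 2.4.2.7 of \cite{LurHigAlg}, which already appeared in the proof of Proposition \ref{closedSM} when we equipped $\iCsep^\op$ with its symmetric monoidal structure. Applying this to the symmetric monoidal functor $L_{X'} \circ \Sigmat : \iNS \functor (\Ecn)^\op$ — which is symmetric monoidal as a composite of the two symmetric monoidal functors $\Sigmat$ and $L_{X'}$ — yields that $(L_{X'} \circ \Sigmat)^\op = (L_{X'})^\op \circ (\Sigmat)^\op = \kc : \iNS^\op \functor \Ecn$ is symmetric monoidal.

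The one point that warrants a sentence of care is that the symmetric monoidal structure on $\Ecn$ is precisely the one induced, via passage to opposites, from the symmetric monoidal structure on $(\Ecn)^\op$ constructed in the previous proposition, so that the target of $\kc$ carries the intended monoidal structure and $\kc$ is monoidal for it; this is a matter of unwinding definitions. Similarly the monoidal structure on $\iNS^\op$ is the opposite of the closed symmetric monoidal structure on $\iNS$ from Proposition \ref{closedSM}. I do not anticipate any real obstacle here — the content is entirely packaged in Theorem \ref{SMNSp}, the preceding proposition, and the formal fact that $(-)^\op$ is a functor on symmetric monoidal $\infty$-categories and symmetric monoidal functors — so the proof is a two- or three-line composition-and-duality argument rather than anything requiring new computation.
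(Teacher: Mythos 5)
Your proof is correct and is exactly the argument the paper intends: the corollary is stated without proof as an immediate consequence of Theorem \ref{SMNSp} and the preceding proposition, via composition of symmetric monoidal functors followed by passage to opposite symmetric monoidal structures. Your care in noting that the monoidal structures on $\iNS^\op$ and $\Ecn$ are the opposite ones, so that the citation to (the dual of) Remark 2.4.2.7 of \cite{LurHigAlg} applies, is exactly the right point to flag.
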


\noindent
The following Theorem demonstrates that Definition \ref{bu} is appropriate. 

\begin{thm} \label{buTop}
 There is a fully faithful exact functor $\bu\hookrightarrow\hEcn$ thereby showing that $\bu$ is a topological triangulated category.
\end{thm}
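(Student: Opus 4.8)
\textbf{Proof plan for Theorem \ref{buTop}.} The plan is to exhibit $\bu$ as (the opposite of) a full subcategory of $\hEcn$ spanned by suspension spectra, and then invoke the topological triangulated category formalism of Schwede via the ambient stable $\infty$-category $\Ecn$. Recall that $\bu$ was defined by Thom as the Verdier quotient $\NSH\to\NSH[(X^\op)^{-1}]$, while by Definition \ref{bu} the category $(\Ecn)^\op$ is the accessible localization $X'^{-1}\iNSp$ along the image $X'$ of $X$ under $\Sigmat\circ j$. First I would observe that, by Theorem 2.26 of \cite{MyNSH}, the functor $\pi^\op\colon\NSH^\op\to\h\iNSp$ is fully faithful and tensor-triangulated, identifying $\NSH^\op$ with the thick triangulated subcategory of $\h\iNSp$ generated by the $\Sigmat(A)$ for $A\in\iCsep^\op$ (equivalently $\iCsep$). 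Composing with the localization functor $L_{X'}\colon\iNSp\to(\Ecn)^\op$ gives a tensor-triangulated functor $\NSH^\op\to\hEcn[?]$; more precisely, passing to opposites throughout, I obtain an exact functor $\bu\to\hEcn$ sending the class of a $C^*$-algebra $A$ to $\kc(j(A))$.

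The key step is full faithfulness. Here I would use the universal property of the Verdier quotient: since $L_{X'}$ inverts exactly the strongly saturated collection generated by $X'$, and $X'$ is the $\Sigmat\circ j$-image of the corner-embedding maps $M_2(A)\to A$, the induced functor on homotopy categories factors the localization $\NSH^\op\to\NSH[(X^\op)^{-1}]^\op=\bu^\op$ and continues to the subcategory of $\hEcn$ spanned by the $\kc(j(A))$. For injectivity on Hom-sets I would argue that the localization of $\h\iNSp$ at $X'$, restricted to the thick subcategory generated by the compact objects $\Sigmat(A)$, computes precisely $\bu^\op$: both are obtained from $\NSH^\op$ by inverting the same set of maps, and one checks that no further identifications occur — this uses that the $\Sigmat(A)$ are compact in $\iNSp$ (so that $X'^{-1}(-)$ on the subcategory they generate agrees with the naive triangulated Verdier quotient, by Neeman-type localization theory, cf.\ the use of Propositions 5.6–5.7 of \cite{BluGepTab} in the construction of $\iNSp$). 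Essential surjectivity onto the subcategory spanned by suspension spectra is immediate from the definition of $\kc$.

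Finally, to conclude that $\bu$ is a topological triangulated category in the sense of Schwede \cite{SchTopTri}, I would invoke the fact — already used for $\h\iNSp$ in \cite{MyNSH} — that the homotopy category of any stable $\infty$-category is topological, together with the fact that a full triangulated subcategory of a topological triangulated category is again topological. Since $\Ecn$ is a stable $\infty$-category (it is a localization of the stable $\infty$-category $\iNSp$), $\hEcn$ is topological, and hence so is its full triangulated subcategory equivalent to $\bu$; a priori this subcategory is the one spanned by the suspension spectra, but the earlier full faithfulness identifies it with all of $\bu$.

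\textbf{Main obstacle.} The delicate point is the second step: verifying that inverting $X'$ inside $\iNSp$ does not collapse Hom-groups beyond what the Verdier quotient defining $\bu$ already does. The subtlety is that $X'^{-1}\iNSp$ is an $\infty$-categorical localization of a large presentable category, whereas $\bu$ is a small triangulated Verdier quotient; reconciling the two requires carefully tracking compactness of the $\Sigmat(A)$ and applying the comparison between Bousfield localization and Verdier quotient on the subcategory of compact objects (exactly the mechanism behind Propositions 5.6–5.7 of \cite{BluGepTab}). I expect this compatibility — essentially that $\h(X'^{-1}\iNSp)$ restricted to compacts is the idempotent-complete Verdier quotient $\h\iNSp[(X')^{-1}]^{\mathrm{c}}$, matching $\bu$ up to the already-understood behaviour of $\NSH$ — to be where the real work lies, everything else being formal consequences of results cited earlier in the paper.
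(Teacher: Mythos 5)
Your proposal follows essentially the same route as the paper: factor the composition $\NSH^\op\xrightarrow{\pi^\op}\hNSp\xrightarrow{L_{X'}}\hEcn^\op$ through the Verdier quotient $V\colon\NSH^\op\to\bu^\op$ and then check full faithfulness of the induced comparison functor via compactness of the objects $\Sigmat(A)$. The conclusion about topologicity is also the same (pass through the stable $\infty$-category $\Ecn$). The one place your sketch leaves a gap — which you yourself flag as ``where the real work lies'' — is precisely the full-faithfulness comparison between a triangulated Verdier quotient of a small category and a Bousfield localization of a large presentable stable $\infty$-category. You gesture at Propositions 5.6--5.7 of \cite{BluGepTab}, but those propositions concern the construction of $\iNSp$ itself (identifying the cofiber of $\Ind_\omega(\cA)\to\Sp(\iNS)$ with the $S$-localization), not the comparison of Hom-groups needed here. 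The tool the paper actually deploys is Krause's theory of compactly generated localizations: $\ker(V)$ is generated by $\{\mathrm{cone}(f)\mid f\in X\}$, $\ker(L_{X'})$ is compactly generated by $\pi^\op(\ker(V))$, and then Theorem 7.2.1(3) together with Lemma 4.7.1 of \cite{KraLoc} gives full faithfulness of the induced functor $\bu^\op\to\hEcn^\op$. So your architecture is right, and your intuition about ``Neeman-type localization theory'' is accurate, but the concrete citation you would need to close the argument is to Krause rather than to BGT. Your final paragraph (homotopy category of a stable $\infty$-category is topological; a full triangulated subcategory of a topological triangulated category is topological) is correct and matches the paper's appeal to the argument of Theorem 2.27 of \cite{MyNSH}.
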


\begin{proof}
Recall from Theorem 2.26 of \cite{MyNSH} that there is a fully faithful exact functor $\pi:\NSH\functor\hNSp^\op$. We consider its opposite $\pi^\op:\NSH^\op\functor\hNSp$, which is also fully faithful and whose image lies inside the compact objects of $\hNSp$. Since $\pi^\op(X) = X'$ by construction there is the following commutative diagram:
 
 \beqn
 \xymatrix{
 \NSH^\op\ar[r]^{\pi^\op}\ar[d]_V & \hNSp\ar[d]^{L_{X'}} \\
 \bu^\op \ar@{-->}[r] & \hEcn^\op. 
 }\eeqn The dashed functor making the above diagram commute exists because the triangulated category $\ker(V)$ is generated by $\{\mathrm{cone}(f)\,|\, f\in X\}$ and $\ker(L_{X'})$ is compactly generated by $\pi^\op(\ker(V))$. It follows from Theorem 7.2.1(3) and Lemma 4.7.1 of \cite{KraLoc} that the dashed functor is fully faithful. Taking its opposite furnishes the desired fully faithful exact functor $\bu\hookrightarrow\hEcn$. An argument similar to Theorem 2.27 of \cite{MyNSH} shows that $\bu$ is topological.
\end{proof}

\begin{ex} \label{buName}
Let $(X,x)$ and $(Y,y)$ be two finite pointed CW complexes. Then $$\bu(\C(X,x),\C(Y,y)) \cong \colim_m\, [\Sigma^m Y, \mathbf{ku}_m \wedge X],$$ where $\mathbf{ku}$ denotes the connective $\K$-theory spectrum (see Theorem 4.2.1 of \cite{ThomThesis}).
\end{ex}

Thanks to Corollary \ref{SMbu} we conclude that if $A$ is a coidempotent object in $\iCsep^\op$ (and hence an idempotent object in $\iCsep$), then $\kc(A)$ is an idempotent object in $\Ecn$. We denote the smashing localization of $\Ecn$ with respect to such an idempotent object $\kc(A)$ by $\Ecn[A^{-1}]$. From Lemma \ref{SSAidem} we know that every strongly self-absorbing $C^*$-algebra $\cD$ is an idempotent object in $\iCsep$. For brevity we denote the composite functor $\iNS^\op\overset{\kc}{\map}\Ecn\map\Ecn[\cD^{-1}]$ also by $\kc$ in the sequel. Since $\Ecn\map\Ecn[\cD^{-1}]$ is a smashing localization the functor $\kc:\iNS^\op\map\Ecn[\cD^{-1}]$ is also symmetric monoidal.

\begin{thm} \label{OColoc}
For any $A,B\in\Csep$ there is a natural isomorphism $$\hEcn[\Oinf^{-1}](\kc(A),\kc(B))\cong \E_0(A,B).$$
\end{thm}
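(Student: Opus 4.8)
The plan is to chase the identity $\hEcn[\Oinf^{-1}](\kc(A),\kc(B))\cong\E_0(A,B)$ through the two successive localizations that define $\Ecn[\Oinf^{-1}]$, reducing everything to bivariant $\E$-theory via the comparison already established in Theorem \ref{buTop}. First I would unwind the definitions. Since $\Ecn\to\Ecn[\Oinf^{-1}]$ is the smashing localization associated with the idempotent object $\kc(\Oinf)$ (Corollary \ref{SMbu} and Lemma \ref{SSAidem}), there is a natural identification
\[
\hEcn[\Oinf^{-1}](\kc(A),\kc(B))\cong\hEcn(\kc(A\prot\Oinf),\kc(B\prot\Oinf)),
\]
exactly as in the proof of Proposition \ref{DStab} and Proposition \ref{Ksummand}, using that $\kc:\iNS^\op\to\Ecn$ is symmetric monoidal and that $\Oinf$ is strongly self-absorbing so that $A\otimes\kc(\Oinf)\simeq\kc(A\prot\Oinf)$. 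So it suffices to compute the $\hom$-groups in $\bu$ (equivalently, via the fully faithful embedding $\bu\hookrightarrow\hEcn$ of Theorem \ref{buTop}) between the images of $\Oinf$-stable $C^*$-algebras.

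Next I would bring in the comparison between connective and nonconnective $\E$-theory. There is a canonical functor $\Ecn\to\mathtt{E_\infty}$ (Remark \ref{ECtoE}), induced by inverting $\kc(\cpt)$, hence a natural map $\bu(C,D)\to\E_0(C,D)$ for all $C,D\in\Csep$. The key input is that this map is an isomorphism on $\Oinf$-stable $C^*$-algebras. This should follow from the structure of $\bu$: by Example \ref{buName} (Thom's Theorem 4.2.1), $\bu(C,D)$ is built from the connective $\K$-theory spectrum $\mathbf{ku}$ via a bivariant homology construction, and passing to $\E_0$ amounts to inverting $\cpt$, i.e., replacing $\mathbf{ku}$ by $\mathbf{KU}$. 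For $\Oinf$-stable algebras one already knows (this is essentially the content of Corollary \ref{KsummandCor} together with $\Oinf$-stability and pure infiniteness, cf. \cite{KirRor,WinZStable}) that the relevant invariant does not see the negative $\K$-groups obstruction: $C\prot\Oinf$ absorbs $\cpt$ appropriately after the colocalization, so the connective-to-periodic comparison map becomes an equivalence. Concretely I would run the argument already used in Proposition \ref{Ksummand}: the composite $\cpt\overset{i}{\to}\Oinf\overset{\theta}{\to}\Oinf\prot\cpt$ is an $\E$-equivalence, and because $\Oinf$ satisfies UCT and is purely infinite this composite is detected already at the connective level after localizing at $X'$ — which forces $\kc(\cpt)\to\one$ to become an equivalence in $\Ecn[\Oinf^{-1}]$, so that $\Ecn[\Oinf^{-1}]\to\mathtt{E_\infty}[\Oinf^{-1}]$ is an equivalence onto the relevant subcategory. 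Combining this with the identification $\hEcn(\kc(A\prot\Oinf),\kc(B\prot\Oinf))\cong\NSH(B\prot\Oinf,A\prot\Oinf)$ (Theorem 2.26 of \cite{MyNSH}) — wait, orientation matters — and then with Theorem 4.1.1 of \cite{ThomThesis} which turns $\NSH$ into $\E_0$ once the target is stable, together with $\Oinf$-stability of $\E$-theory in both variables, yields $\E_0(A\prot\Oinf,B\prot\Oinf)\cong\E_0(A,B)$.

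Putting the pieces together: $\hEcn[\Oinf^{-1}](\kc(A),\kc(B))\cong\hEcn(\kc(A\prot\Oinf),\kc(B\prot\Oinf))\cong\E_0(A\prot\Oinf,B\prot\Oinf)\cong\E_0(A,B)$, where the middle isomorphism is the one requiring work and the last uses $\Oinf\prot\Oinf\cong\Oinf$ and the $\Oinf$-stability of bivariant $\E$-theory. The main obstacle is precisely the middle step: showing that inverting $\kc(\Oinf)$ already forces $\kc(\cpt)$ to be invertible, i.e., that the connective $\E$-theory of $\Oinf$-stable algebras coincides with their periodic $\E$-theory. I expect this to be the crux, and the cleanest route is to show that in $\Ecn[\Oinf^{-1}]$ the corner embedding maps $\Sigmat(M_2(C))\to\Sigmat(C)$ together with the unital embedding $\cpt\hookrightarrow\Oinf$ (mapping $e_{ij}\mapsto s_is_j^*$) make $\kc(\cpt)\to\one$ an equivalence, reusing the $\E$-equivalence $\cpt\to\Oinf\to\Oinf\prot\cpt$ from Proposition \ref{Ksummand} but now observing that each constituent map is already inverted in $X'^{-1}\iNSp$ after smashing with $\kc(\Oinf)$, because $\Oinf$ satisfies UCT and hence its connective $\mathbf{ku}$-homology agrees with its $\mathbf{KU}$-homology in the range that matters (the relevant comparison of spectra $\mathbf{ku}\wedge\Oinf\to\mathbf{KU}\wedge\Oinf$ being an equivalence for this purely infinite UCT algebra).
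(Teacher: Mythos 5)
Your first two reductions coincide with the paper's: you correctly use the smashing localization and symmetric monoidality of $\kc$ to identify $\hEcn[\Oinf^{-1}](\kc(A),\kc(B))$ with $\hEcn(\kc(A\prot\Oinf),\kc(B\prot\Oinf))$, and you correctly invoke Theorem~\ref{buTop} to translate this into a $\bu$-group. You also correctly identify the crux: one must show that the comparison $\bu(A,B\prot\Oinf)\to\E_0(A,B\prot\Oinf\prot\cpt)$ induced by $\theta:\Oinf\to\Oinf\prot\cpt$ is an isomorphism. Your surjectivity argument — that $\theta\circ i:\cpt\to\Oinf\prot\cpt$ is an $\E$-equivalence, combined with Lemma~4.2.4 of \cite{ThomThesis} identifying $\bu(-,D)\cong\E_0(-,D)$ when $D$ is stable — is exactly the paper's.

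However, the injectivity half of the argument is where you have a genuine gap, and you acknowledge it yourself (``the main obstacle is precisely the middle step''). Your proposed route is to invoke a spectral heuristic: that $\mathbf{ku}\wedge\Oinf\to\mathbf{KU}\wedge\Oinf$ is an equivalence for the purely infinite UCT algebra $\Oinf$. This does not work as stated. The representation of $\bu$ via $\mathbf{ku}$ (Example~\ref{buName}) is only available for finite pointed CW complexes, not for $\Oinf$ or arbitrary separable $C^*$-algebras, so the smash product you write is at best a heuristic. More seriously, even granting such a representation, $\mathbf{ku}\to\mathbf{KU}$ is not an equivalence and smashing with a fixed object preserves that: you would need to argue that the Bott element already acts invertibly on the relevant connective object, which is precisely the statement to be proved, not a known input.

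The paper fills this gap by an explicit construction instead of a spectral comparison: using the Cuntz isometries of $\Oinf$, one defines $\kappa:\Oinf\prot\cpt\to\Oinf$ by $\kappa(a\otimes e_{ij})=s_ias_j^*$, and the composite $\kappa\circ\theta:\Oinf\to\Oinf$ is the inner endomorphism $a\mapsto s_1as_1^*$. Tensoring with $B$ (unital) keeps the composite inner, and since $\bu(A,-)$ is matrix-stable, Proposition~3.16 of \cite{CunMeyRos} shows $\bu(A,-)$ sends inner endomorphisms to isomorphisms. Thus $\bu(A,B\prot\Oinf)\overset{\theta}{\to}\bu(A,B\prot\Oinf\prot\cpt)\overset{\kappa}{\to}\bu(A,B\prot\Oinf)$ composes to an isomorphism, giving injectivity of $\theta$. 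Combined with the surjectivity you already have, this gives $\theta$ an isomorphism, after which $C^*$- and $\Oinf$-stability of $\E$-theory finish as you describe; nonunital $B$ is handled by excision. You should replace your $\mathbf{ku}$-homological heuristic with this concrete isometry argument.
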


\begin{proof}
 By construction there is a natural identification $$\hEcn[\Oinf^{-1}](\kc (A),\kc (B))\cong\hEcn(\kc (A\prot\Oinf),\kc (B\prot\Oinf)),$$ where we used the fact that $\h\iCsep\functor\iNS^\op\overset{\kc}{\functor}\hEcn$ is symmetric monoidal (see Corollary \ref{SMbu}). Using Theorem \ref{buTop} we also deduce that $$\hEcn(\kc(A\prot\Oinf),\kc (B\prot\Oinf))\cong\bu(A\prot\Oinf, B\prot\Oinf).$$ Thus it suffices to show that $\bu(A\prot\Oinf, B\prot\Oinf)\cong\E_0(A,B)$. Now consider again the composition of $*$-homomorphisms $\cpt\overset{i}{\map}\Oinf\overset{\theta}{\map}\Oinf\prot\cpt$ as in the proof of Proposition \ref{Ksummand}. Tensoring the diagram with $B$ and applying the functor $\bu(A,-)$ we get $$\bu(A,B\prot\cpt)\overset{i}{\map}\bu(A,B\prot\Oinf)\overset{\theta}{\map}\bu(A,B\prot\Oinf\prot\cpt).$$ From Lemma 4.2.4 of \cite{ThomThesis} we know that for $C,D\in\Csep$ we have $\bu(C,D)\cong\E_0(C,D)$ naturally whenever $D$ is stable. Hence the above composition of homomorphisms can be naturally identified with $$\E_0(A,B\prot\cpt)\overset{i}{\map}\bu(A,B\prot\Oinf)\overset{\theta}{\map}\E_0(A,B\prot\Oinf\prot\cpt).$$ This composition is an $\E$-equivalence, since $\theta\circ i:\cpt\map\Oinf\prot\cpt$ is an $\E$-equivalence. It follows that $\theta$ is surjective. Now consider a different comoposition of $*$-homomorphisms $\Oinf\overset{\theta}{\map}\Oinf\prot\cpt\overset{\kappa}{\map}\Oinf$, where $\kappa(a\otimes e_{ij})= s_i a s_j^*$. The composite $*$-endomorphism $\kappa\circ\theta:\Oinf\map\Oinf$ is inner, i.e.,  $a\mapsto s_1 a s_1^*$ and tensoring the diagram with unital $B$ we again get a composite inner $*$-endomorphism of $B\prot\Oinf$. Now applying the matrix stable functor $\bu(A,-)$ and using Proposition 3.16 of \cite{CunMeyRos} we find that the composition $$\bu(A,B\prot\Oinf)\overset{\theta}{\map}\bu(A,B\prot\Oinf\prot\cpt)\overset{\kappa}{\map}\bu(A,B\prot\Oinf)$$ is an isomorphism. Applying Lemma 4.2.4 of \cite{ThomThesis} again we can naturally identify the above composition with $$\bu(A,B\prot\Oinf)\overset{\theta}{\map}\E_0(A,B\prot\Oinf\prot\cpt)\overset{\kappa}{\map}\bu(A,B\prot\Oinf),$$ whose composition is an isomorphism. This implies that $\theta$ is also injective. Thus we have proven that $\theta:\bu(A,B\prot\Oinf)\overset{\sim}{\map}\E_0(A,B\prot\Oinf\prot\cpt)$ is an isomorphism. Finally, using $C^*$-stability and $\Oinf$-stability of bivariant $\E$-theory one has $\E_0(A,B\prot\Oinf\prot\cpt)\cong\E_0(A,B)$, which is also a natural isomorphism. The proof extends to all (nonunital) $B$ by a simple excision argument.\end{proof}

\begin{rem} \label{OComp}
 An inspection of the proof of Theorem \ref{OColoc} demonstrates that actually a stronger result holds, viz., $$\hEcn(\kc(A),\kc (B\prot\Oinf))\cong\E_0(A,B)$$ for any $A,B\in\Csep$.
\end{rem}

\begin{cor} \label{MyComparison}
 The nonconnective algebraic $\K$-theory of $\Oinf$-stable separable $C^*$-algebras factors through the essential image of $\h\iCsep\functor\h\iNS^\op\overset{}{\functor}\hEcn[\Oinf^{-1}]$.
\end{cor}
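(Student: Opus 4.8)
The plan is to use Theorem \ref{OColoc} to recognize the essential image in the statement as a copy of the degree-zero bivariant $\E$-theory category, and then to factor topological $\K$-theory — which for $\Oinf$-stable algebras coincides with algebraic $\K$-theory — through it.

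First I would make the essential image explicit. Let $\mathcal{E}$ denote the full subcategory of $\hEcn[\Oinf^{-1}]$ spanned by the objects $\kc(A)$, $A\in\Csep$; this is exactly the essential image of $\h\iCsep\functor\h\iNS^\op\overset{\kc}{\functor}\hEcn[\Oinf^{-1}]$. By Theorem \ref{OColoc} the map on morphisms $[A,B]=\h\iCsep(A,B)\functor\mathcal{E}(\kc(A),\kc(B))$ induced by $\kc$ is the canonical map $[A,B]\functor\E_0(A,B)$ followed by the natural isomorphism of that theorem; as the latter is induced by the symmetric monoidal functor $\kc$ (Corollary \ref{SMbu}) it is compatible with composition. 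Consequently $\mathcal{E}$ is canonically equivalent to the ordinary category $\mathbf{E}$ whose objects are the separable $C^*$-algebras and whose morphism sets are $\mathbf{E}(A,B)=\E_0(A,B)$. One may restrict everything to $\Oinf$-stable algebras without loss, since $\kc(A)\simeq\kc(A\prot\Oinf)$ in $\hEcn[\Oinf^{-1}]$ (because $\kc(\Oinf)$ is the tensor unit there) and $\E_0(A,B)\cong\E_0(A\prot\Oinf,B\prot\Oinf)$ by $\Oinf$-stability of bivariant $\E$-theory.

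Next I would invoke the universal property of bivariant $\E$-theory: topological $\K$-theory, regarded as a functor from $\Csep$ to spectra, is homotopy invariant, $C^*$-stable and exact, hence factors as $\h\iCsep\functor\mathbf{E}\functor\Sp$; alternatively, this factorization can be obtained from the functor $\Ecn\functor\mathtt{E_\infty}$ of Remark \ref{ECtoE} (after $\Oinf$-localization, which changes nothing on $\mathtt{E_\infty}$) together with the identification of $\h\mathtt{E_\infty}$ in Remark \ref{Einfty}. Combining this with the first step, the restriction of $\K^{\mathrm{top}}$ to $\Oinf$-stable separable $C^*$-algebras is the composite $$\{\Oinf\text{-stable }C^*\text{-algebras}\}\overset{\kc}{\functor}\mathcal{E}\overset{\sim}{\functor}\mathbf{E}\functor\Sp.$$ Finally, by \cite{CorPhi} (see also \cite{MyComparison}, and Theorem \ref{KQ} below for a more general statement) the nonconnective algebraic $\K$-theory of an $\Oinf$-stable separable $C^*$-algebra is naturally isomorphic to its topological $\K$-theory; hence the very same composite computes $\K^{\mathrm{alg}}$ on $\Oinf$-stable algebras, which is the asserted factorization through $\mathcal{E}$.

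The step that needs care is the passage from the Hom-set isomorphism of Theorem \ref{OColoc} to an honest equivalence $\mathcal{E}\simeq\mathbf{E}$, i.e. verifying its multiplicativity; this is precisely where one uses that the isomorphism in Theorem \ref{OColoc} is induced by the symmetric monoidal functor $\kc$ rather than being an abstract bijection. The comparison $\K^{\mathrm{alg}}\simeq\K^{\mathrm{top}}$ for $\Oinf$-stable $C^*$-algebras is the genuinely deep ingredient, but it is imported from the literature rather than reproved here, so I do not expect it to pose an obstacle within this proof.
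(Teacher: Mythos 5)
Your proof is correct and follows essentially the same route as the paper's (very terse) argument: invoke the comparison $\K^{\mathrm{alg}}\cong\K^{\mathrm{top}}$ for $\Oinf$-stable $C^*$-algebras from \cite{CorPhi,MyComparison}, then observe that topological $\K$-theory factors through $\E$-theory, which Theorem \ref{OColoc} identifies with the essential image in question. You add welcome detail about the multiplicativity of the isomorphism of Theorem \ref{OColoc} (needed to upgrade the Hom-set bijections to a genuine equivalence of categories $\mathcal{E}\simeq\mathbf{E}$), a point the paper leaves implicit.
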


\begin{proof}
 It was shown in \cite{CorPhi,MyComparison} that the nonconnective algebraic $\K$-theory of $\Oinf$-stable $C^*$-algebras agrees naturally with their topological $\K$-theory. The assertion now follows since topological $\K$-theory, which is naturally isomorphic to $\E$-theory, has the desired property.
\end{proof}

\noindent
A more useful version of the above result is proven below (see Theorem \ref{facKK}). Let $\cQ$ denote any UHF algebra of infinite type, so that $\Oinf\prot\cQ$ is a purely infinite strongly self-absorbing $C^*$-algebra; in fact, any such $C^*$-algebra, that additionally satisfies UCT, is up to isomorphism $\cO_2$, $\Oinf$, or of the form $\Oinf\prot\cQ$ (see Corollary on page 4022 of \cite{TomWin}).

\begin{thm} \label{UHF}
For any $A,B\in\Csep$ there is a natural isomorphism $$\hEcn[(\Oinf\prot\cQ)^{-1}](\kc (A),\kc(B))\cong\E_0(A\prot\cQ,B\prot\cQ).$$
\end{thm}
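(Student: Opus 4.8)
The plan is to mimic the proof of Theorem \ref{OColoc} almost verbatim, replacing $\Oinf$ by the purely infinite strongly self-absorbing $C^*$-algebra $\cD:=\Oinf\prot\cQ$ throughout, and then at the very end exploiting the $\K$-theoretic behaviour of the UHF factor $\cQ$ rather than full $C^*$-stability. First I would use Corollary \ref{SMbu} and the fact that $\h\iCsep\functor\iNS^\op\overset{\kc}{\functor}\hEcn$ is symmetric monoidal to identify
\[
\hEcn[\cD^{-1}](\kc(A),\kc(B))\cong\hEcn(\kc(A\prot\cD),\kc(B\prot\cD)),
\]
and then invoke Theorem \ref{buTop} to rewrite the right-hand side as $\bu(A\prot\cD,B\prot\cD)$. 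So the task reduces to showing $\bu(A\prot\cD,B\prot\cD)\cong\E_0(A\prot\cQ,B\prot\cQ)$.

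Next I would run the same two-step argument as in Theorem \ref{OColoc}. Since $\cD$ admits a unital embedding $\Oinf\overset{\iota}{\hookrightarrow}\cD$ (as $\cD$ is purely infinite strongly self-absorbing, cf.\ \cite{WinZStable}), we get a map $i:\cpt\map\cD$ via $e_{ij}\mapsto\iota(s_i)\iota(s_j)^*$ and the corner embedding $\theta:\cD\map\cD\prot\cpt$. Tensoring $\cpt\overset{i}{\map}\cD\overset{\theta}{\map}\cD\prot\cpt$ with $B$ and applying $\bu(A,-)$, using Lemma 4.2.4 of \cite{ThomThesis} to identify $\bu(C,D)\cong\E_0(C,D)$ when $D$ is stable, the outer composite $\cpt\map\cD\prot\cpt$ is an $\E$-equivalence (pure infiniteness of $\cD$), so $\theta$ is split surjective on $\bu(A,B\prot\cD)$. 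For injectivity, consider $\cD\overset{\theta}{\map}\cD\prot\cpt\overset{\kappa}{\map}\cD$ where $\kappa(a\otimes e_{ij})=\iota(s_i)a\iota(s_j)^*$; the composite $\kappa\circ\theta$ is the inner endomorphism $a\mapsto\iota(s_1)a\iota(s_1)^*$, which after tensoring with unital $B$ and applying the matrix-stable functor $\bu(A,-)$ induces an isomorphism by Proposition 3.16 of \cite{CunMeyRos}. Hence $\theta:\bu(A,B\prot\cD)\overset{\sim}{\map}\E_0(A,B\prot\cD\prot\cpt)$, and the same argument with the roles of $A$ and $B$ symmetrized (or directly for $\bu(A\prot\cD,B\prot\cD)$) yields $\bu(A\prot\cD,B\prot\cD)\cong\E_0(A\prot\cD,B\prot\cD\prot\cpt)\cong\E_0(A\prot\cD,B\prot\cD)$ by $C^*$-stability.

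The final step, and the one that differs from Theorem \ref{OColoc}, is to identify $\E_0(A\prot\cD,B\prot\cD)$ with $\E_0(A\prot\cQ,B\prot\cQ)$. Here I would write $\cD=\Oinf\prot\cQ$ and use that bivariant $\E$-theory is $\Oinf$-stable in both variables (the canonical map $\E_0(E,F)\map\E_0(E\prot\Oinf,F\prot\Oinf)$ is an isomorphism for all $E,F$, which is exactly the $\Oinf$-stability invoked for Theorem \ref{OColoc} and Corollary \ref{KsummandCor}), to collapse the $\Oinf$ tensor factors on both sides: $\E_0(A\prot\cQ\prot\Oinf,B\prot\cQ\prot\Oinf)\cong\E_0(A\prot\cQ,B\prot\cQ)$. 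Finally I would note the proof extends to all nonunital $B$ by the same excision argument used at the end of Theorem \ref{OColoc}, since everything in sight is split-exact/half-exact and the relevant maps are compatible with extensions. I expect the main obstacle to be purely bookkeeping: making sure that the $C^*$-stability and $\Oinf$-stability identifications are applied to the correct variable and in the correct order, and that the inner endomorphism argument via \cite{CunMeyRos} genuinely applies to $\bu(A\prot\cD,B\prot\cD)$ (not just to $\bu(A,-)$), which requires knowing that $\bu$ is matrix-stable in \emph{both} variables — this should follow from its construction as a localization of $\NSH$ at the corner embeddings.
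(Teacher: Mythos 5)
Your proposal is correct and follows essentially the same route as the paper, which in fact disposes of this theorem in two lines by saying ``as before'' and ``arguing as in the previous Theorem,'' i.e.\ by running the Theorem \ref{OColoc} argument with $\cD=\Oinf\prot\cQ$ in place of $\Oinf$. Your spelled-out version — reduce to $\bu(A\prot\cD,B\prot\cD)$, show $\bu(A\prot\cD,B\prot\cD)\cong\E_0(A\prot\cD,B\prot\cD)$ via the $\theta,\kappa$ split/injectivity argument and $C^*$-stability, then collapse the $\Oinf$ factor using $\Oinf$-stability of $\E$-theory — is exactly what the paper's terse proof intends, and your closing caveats (matrix-stability of $\bu$ in the relevant variable, the inner endomorphism $a\mapsto\iota(s_1)a\iota(s_1)^*$) are handled just as in the $\Oinf$ case.
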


\begin{proof}
As before we first observe that $$\hEcn[(\Oinf\prot\cQ)^{-1}](\kc(A),\kc(B))\cong\hEcn(\kc(A\prot\Oinf\prot\cQ),\kc (B\prot\Oinf\prot\cQ)).$$ Arguing as in the previous Theorem one then proves that $$\hEcn(\kc(A\prot\Oinf\prot\cQ),\kc (B\prot\Oinf\prot\cQ))\cong\E_0(A\prot\cQ,B\prot\cQ).$$ 
\end{proof}

\begin{ex}
The stable $\infty$-category consisting of the compact objects of $\Ecn[(\Oinf\prot\cQ)^{-1}]$ constitutes an $\infty$-categorical model for the opposite of rationalized bivariant $\E$-theory category. Indeed, it is well known that tensoring with the universal UHF algebra rationalizes $\E$-theory. For instance, we have the following sequence of isomorphisms: \beqn
\E_i(\Oinf\prot\cQ,A\prot\cQ)&\cong&\E_i(\Oinf\prot\cQ,A\prot(\Oinf\prot\cQ)) \text{ [use $\Oinf$-stability in bivariant $\E$-theory]}\\
&\cong& \E_i(A\prot\Oinf\prot\cQ) \text{ [use Theorem in Section 3 of \cite{DadWin} with $\cD=\Oinf\prot\cQ$]}\\
&\cong& \E_i(A\prot\cQ) \text{ [use $\Oinf$-stability in $\E$-theory]}\\
&\cong& \E_i(A)\otimes_\ZZ \QQ \text{ [use K{\"u}nneth formula in $\E$-theory]}
\eeqn for $i=0,1$. Thus this localization can be viewed as an $\infty$-categorical model for the noncommutative Chern--Connes character in bivariant connective $\E$-theory.
\end{ex}

\begin{thm} \label{O2}
 For any $A,B\in\Csep$ there is a natural isomorphism $$\hEcn[\cO_2^{-1}](\kc(A),\kc(B))\cong 0.$$
\end{thm}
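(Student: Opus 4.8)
The plan is to mimic the proof of Theorem \ref{OColoc}, replacing $\Oinf$ by $\cO_2$ and exploiting the extra fact that $\cO_2$ absorbs everything. First I would reduce the claim, exactly as in the previous two theorems, to a statement in bivariant connective $\E$-theory: by Corollary \ref{SMbu} the functor $\kc$ is symmetric monoidal and $\cO_2$ is an idempotent object in $\iCsep$ (Lemma \ref{SSAidem}), so $$\hEcn[\cO_2^{-1}](\kc(A),\kc(B))\cong\hEcn(\kc(A\prot\cO_2),\kc(B\prot\cO_2)),$$ and then by Theorem \ref{buTop} this is $\bu(A\prot\cO_2,B\prot\cO_2)$. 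So it suffices to prove $\bu(A\prot\cO_2,B\prot\cO_2)\cong 0$.

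Next I would run the same $\cpt\overset{i}{\map}\cO_2\overset{\theta}{\map}\cO_2\prot\cpt$ argument used for $\Oinf$ to identify $\bu(A,B\prot\cO_2)$ with $\E_0(A,B\prot\cO_2\prot\cpt)$: tensoring with $B$, applying $\bu(A,-)$, using Lemma 4.2.4 of \cite{ThomThesis} to replace the stable terms by $\E$-theory, and using that $\cO_2$ (being purely infinite strongly self-absorbing with a unital copy of $\Oinf$) makes $\theta\circ i$ an $\E$-equivalence for surjectivity and the $\kappa\circ\theta$ inner-endomorphism trick (Proposition 3.16 of \cite{CunMeyRos}) for injectivity. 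This gives $\bu(A\prot\cO_2,B\prot\cO_2)\cong\E_0(A\prot\cO_2,B\prot\cO_2\prot\cpt)$. Then by $C^*$-stability of $\E$-theory this is $\E_0(A\prot\cO_2,B\prot\cO_2)$, and since $\cO_2\prot\cO_2\cong\cO_2$, i.e. $\cO_2$ is $\cO_2$-stable, one may as well write it as $\E_0(A\prot\cO_2,B\prot\cO_2)$. Finally I invoke the fundamental Kirchberg fact that $\cO_2\prot D\prot\cpt$ is $\KK$-equivalent (hence $\E$-equivalent) to $0$ for every separable $C^*$-algebra $D$ — equivalently $\cO_2$ is $\KK$-contractible — so that $B\prot\cO_2$ is $\E$-equivalent to $0$, forcing $\E_0(A\prot\cO_2,B\prot\cO_2)\cong 0$; the excision argument extends this to nonunital $B$ just as in Theorem \ref{OColoc}.

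An alternative, arguably cleaner route avoids the $\cpt$-diagram entirely: once the problem is reduced to showing $\bu(A\prot\cO_2,B\prot\cO_2)=0$, one can observe that $\cO_2\cong\cO_2\prot\cO_2$ is a zero object in $\KK$ and $\bu$ maps to $\KK$-theory (or: $\bu(C,D)$ only depends on the $\bu$-class of $D$, and $B\prot\cO_2$ is $\bu$-equivalent to $0$ because $\cO_2$ is $\KK$-contractible and $\bu\to\KK$ is conservative on such objects via the summand description of $\bu$ in terms of connective $\K$-theory from Example \ref{buName}). Either way the key nontrivial input is Kirchberg's $\cO_2$-absorption/$\KK$-contractibility theorem; everything else is bookkeeping already carried out in Theorems \ref{OColoc} and \ref{UHF}.

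The main obstacle I anticipate is purely expository rather than mathematical: making precise, in the stable $\infty$-categorical language of $\hEcn$, the transition "$B\prot\cO_2$ is $\E$-trivial $\Rightarrow$ $\bu$-trivial $\Rightarrow$ the relevant $\hom$-group vanishes." The cleanest formulation is to do the reduction to $\E_0(A\prot\cO_2,B\prot\cO_2)$ first (where $\E$-triviality of $B\prot\cO_2$ immediately gives $0$) and only then translate back, so that one never has to argue directly about vanishing of objects in $\hEcn[\cO_2^{-1}]$ — though it is worth remarking that in fact the localized category $\Ecn[\cO_2^{-1}]$ is itself trivial on suspension spectra of $C^*$-algebras, which is the conceptual content of the theorem.
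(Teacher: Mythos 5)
Your first route contains a genuine error: the claim that $\theta\circ i:\cpt\map\cO_2\prot\cpt$ is an $\E$-equivalence is false. You have imported this assertion from the $\Oinf$ case, where it worked because $\Oinf$ is $\KK$-equivalent to $\CC$ (so $\K_*(\Oinf\prot\cpt)\cong\K_*(\cpt)$ and the map hits the generator of $\K_0$). For $\cO_2$ the opposite is true: $\cO_2$ is $\KK$-\emph{contractible}, so $\K_*(\cO_2\prot\cpt)=0\neq\K_*(\cpt)$ and no map $\cpt\map\cO_2\prot\cpt$ can be an $\E$-equivalence. Consequently your surjectivity argument breaks down and you cannot conclude the isomorphism $\bu(A\prot\cO_2,B\prot\cO_2)\cong\E_0(A\prot\cO_2,B\prot\cO_2\prot\cpt)$. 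The paper's actual proof is more modest precisely at this point: it uses only the $\kappa\circ\theta$ inner-endomorphism trick (via partial isometries in $\cO_2$) to exhibit $\bu(A\prot\cO_2,B\prot\cO_2)$ as a \emph{summand} of $\E_0(A\prot\cO_2,B\prot\cO_2\prot\cpt)$, and then observes that this latter group vanishes because $\cO_2\prot D$ is $\KK$-contractible for every $D$. Split injectivity plus vanishing of the target is all that is needed; your conclusion is salvageable once the spurious surjectivity step is dropped.

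Your alternative route has a more serious gap: you need "$B\prot\cO_2$ is $\bu$-equivalent to $0$," and you try to get it from $\KK$-contractibility via "conservativity of $\bu\to\KK$ on such objects." This is exactly what the theorem is proving and cannot be assumed; the passage from $\NSH$ to $\bu$ to $\E$ is a sequence of nontrivial Verdier quotients, and $\bu$ genuinely retains more information than $\E$ or $\KK$ (that is the whole point of the connective theory). Example \ref{buName} only describes $\bu$ on commutative $C^*$-algebras of finite CW complexes and does not supply the conservativity you need. The correct mechanism, as in the paper, is to show that $\bu(A\prot\cO_2,-)$ applied to $B\prot\cO_2$ splits off from $\bu(A\prot\cO_2,-)$ applied to a stable $C^*$-algebra, where Lemma 4.2.4 of \cite{ThomThesis} identifies $\bu$ with $\E_0$, and only then invoke $\KK$-contractibility of $\cO_2$. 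Your concluding remark that the whole localized category $\Ecn[\cO_2^{-1}]$ is trivial on suspension spectra is correct and is in fact recorded as a Remark in the paper.
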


\begin{proof}
 Once again arguing as in Theorem \ref{OColoc} we observe that $$\hEcn[\cO_2^{-1}](\kc(A),\kc(B))\cong\bu(A\prot\cO_2,B\prot\cO_2).$$ Since $\cO_2$ is a simple and properly infinite $C^*$-algebra one can again find a diagram in $\Csep$ $$\cO_2\overset{\theta}{\map}\cO_2\prot\cpt\overset{\kappa}{\map}\cO_2,$$ such that the composition is an inner endormorphism. Indeed, by Proposition 1.1.2 of \cite{RorClass} there is a sequence $\{s_n\}_{n=1}^\infty$ of partial isometries in $\cO_2$, such that $s_i^*s_i = 1$ for all $i$ and the range projections $s_is_i^*$ are mutually orthogonal subprojections of $1$; thus we may choose $\theta(a)=a\otimes e_{11}$ and $\kappa(a\otimes e_{ij}) = s_i a s_j^*$. Tensoring the diagram with unital $B$ we get another diagram $$B\prot\cO_2\map B\prot\cO_2\prot\cpt\map B\prot\cO_2,$$ such that the composition is again an inner endormorphism. Applying the matrix stable functor $\bu(A\prot\cO_2,-)$ to the above diagram we find that $\bu(A\prot\cO_2, B\prot\cO_2)$ is a summand of $\bu(A\prot\cO_2, B\prot\cO_2\prot\cpt)\cong\E_0(A\prot\cO_2,B\prot\cO_2\prot\cpt).$ Thus it suffices to show that the group $\E_0(A\prot\cO_2, B\prot\cO_2)\cong\E_0(A\prot\cO_2, B\prot\cO_2)$ vanishes. Since $\cO_2$ is $\KK$-contractible, so is $A\prot\cO_2$ and hence it satisfies UCT. Thus one may identify $\E_0(A\prot\cO_2,B\prot\cO_2)\cong\KK_0(A\prot\cO_2,B\prot\cO_2)$ and the group  $\KK_0(A\prot\cO_2,B\prot\cO_2)$ evidently vanishes. The general case, i.e., when $B$ is nonunital, is treated again by an excision argument.
\end{proof}

\begin{rem}
 The stable $\infty$-category $\hEcn[\cO_2^{-1}]$ is compactly generated by the suspensions and desuspensions of $\kc(A)$ for all $A\in\iCsep$. The above Theorem shows that the generators vanish whence the whole stable $\infty$-category vanishes.
\end{rem}

\subsection{Connective $\E$-theory of $ax+b$-semigroup $C^*$-algebras of number rings} 
Number rings are central objects of study in number theory. One can associate an $ax+b$-semigroup $C^*$-algebra with any number ring that possesses very intriguing structure \cite{CunDenLac}. Given the interest generated by such $C^*$-algebras ascertaining their (co)homological invariants seems to be an important task. For any $A\in\Csep$ we call $\bu(\CC,A)$ the {\em connective $\E$-theory} group of $A$. The corresponding graded version can be obtained by (de)suspensions.

For a countable integral domain $R$ with vanishing Jacobson radical (which is, in addition, not a field) the left regular $ax+b$-semigroup $C^*$-algebra $C^*_\lambda(R\rtimes R^\times)$ is $\Oinf$-stable, i.e., $C^*_\lambda(R\rtimes R^\times)\prot\Oinf\cong C^*_\lambda(R\rtimes R^\times)$ (see Theorem 1.3 of \cite{LiAxb}). Cuntz--Echterhoff--Li computed the topological $\K$-theory of such $ax+b$-semigroup $C^*$-algebras in \cite{CunEchLi1} as follows: 
\beq \label{Ktheory}
\K_*(C^*_\lambda(R\rtimes R^\times))\cong \underset{{[Y]\in G\setminus\mathcal{I}}}{\oplus} \K_*(C^*(G_Y)),
\eeq where $\mathcal{I}$ is the set of fractional ideal of $R$, $G=K\rtimes K^\times$, and $G_Y$ is the stabilizer of $Y$ under the $G$-action on $\mathcal{I}$. The orbit space $G\setminus \mathcal{I}$ can be identified with the ideal class group of $K$. The $\K$-theory of the group $C^*$-algebras appearing as summands in the above formula can be computed by the Baum--Connes conjecture.

\begin{thm} \label{AX+B}
 The connective $\E$-theory of the left regular $ax+b$-semigroup $C^*$-algebra of the ring of integers $R$ of a number field $K$ is $2$-periodic and explicitly given by $$\bu(\CC,C^*_\lambda(R\rtimes R^\times))\cong\underset{{[Y]\in G\setminus\mathcal{I}}}{\oplus} \K_0(C^*(G_Y)).$$ and $$\bu(\CC,\Sigma C^*_\lambda(R\rtimes R^\times))\cong\underset{{[Y]\in G\setminus\mathcal{I}}}{\oplus} \K_1(C^*(G_Y)).$$
\end{thm}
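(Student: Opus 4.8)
The plan is to reduce the connective $\E$-theory of $A:=C^*_\lambda(R\rtimes R^\times)$ to its ordinary topological $\K$-theory by exploiting $\Oinf$-stability, and then to quote the Cuntz--Echterhoff--Li computation \eqref{Ktheory}. By Theorem 1.3 of \cite{LiAxb} the separable $C^*$-algebra $A$ is $\Oinf$-stable, i.e.\ $A\cong A\prot\Oinf$; since $\Oinf$-stability is manifestly preserved under tensoring with an arbitrary $C^*$-algebra, the suspensions $\Sigma A$ and $\Sigma^2 A$ are $\Oinf$-stable as well (here $\Sigma$ denotes the $C^*$-algebraic suspension $C_0(\RR)\prot(-)$).

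First I would record that $\bu(\CC,-)$ computes $\K_0$ on $\Oinf$-stable algebras. For any $B\in\Csep$ the fully faithful exact embedding $\bu\hookrightarrow\hEcn$ of Theorem \ref{buTop} identifies $\bu(\CC,B\prot\Oinf)$ with $\hEcn(\kc(\CC),\kc(B\prot\Oinf))$, which by Remark \ref{OComp} (the sharpening of Theorem \ref{OColoc}) is naturally isomorphic to $\E_0(\CC,B)$; and $\E_0(\CC,B)\cong\K_0(B)$ by the Connes--Higson description of $\E$-theory \cite{ConHig}. Hence, whenever $B$ is $\Oinf$-stable, the isomorphism $B\cong B\prot\Oinf$ yields a natural isomorphism $\bu(\CC,B)\cong\K_0(B)$.

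Next I would apply this with $B=A$, $B=\Sigma A$, and $B=\Sigma^2 A$. This gives $\bu(\CC,A)\cong\K_0(A)$, $\bu(\CC,\Sigma A)\cong\K_0(\Sigma A)\cong\K_1(A)$, and $\bu(\CC,\Sigma^2 A)\cong\K_0(\Sigma^2 A)\cong\K_0(A)$, the last step being Bott periodicity for topological $\K$-theory; by naturality this gives $\bu(\CC,\Sigma^{n+2}A)\cong\bu(\CC,\Sigma^nA)$, the asserted $2$-periodicity. Substituting the Cuntz--Echterhoff--Li formula \eqref{Ktheory} for $\K_*(A)$ from \cite{CunEchLi1} then gives $\bu(\CC,A)\cong\bigoplus_{[Y]\in G\setminus\mathcal{I}}\K_0(C^*(G_Y))$ and $\bu(\CC,\Sigma A)\cong\bigoplus_{[Y]\in G\setminus\mathcal{I}}\K_1(C^*(G_Y))$, as claimed.

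I do not expect a serious obstacle here: the heavy lifting is done by Theorem \ref{OColoc}/Remark \ref{OComp} (already proved above) and by the external inputs \cite{LiAxb} and \cite{CunEchLi1}, so the argument is essentially bookkeeping. The only points requiring a little care are checking that $\Sigma A$ and $\Sigma^2 A$ remain $\Oinf$-stable (immediate, since $\Oinf$-stability is stable under $C^*$-tensoring) and keeping track of which notion of suspension enters $\bu(\CC,\Sigma A)$ — but this is harmless, as any of the competing degree conventions identifies the group with $\K_1(A)$ after the passage to $\E$-theory. The one conceptual remark worth stating explicitly is that connective $\E$-theory acquires genuine $2$-periodicity only after one restricts to the $\Oinf$-stable regime, where Theorem \ref{OColoc} collapses it onto honest (nonconnective) $\E$-theory.
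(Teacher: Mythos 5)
Your proof is correct and follows essentially the same route as the paper: exploit the $\Oinf$-stability of $C^*_\lambda(R\rtimes R^\times)$ to invoke Remark \ref{OComp}, identify $\bu(\CC,-)$ with $\E_0(\CC,-)\cong\K_0(-)$ on $\Oinf$-stable algebras, and then substitute the Cuntz--Echterhoff--Li formula. The only difference is that you spell out a few steps the paper leaves implicit, namely that the suspensions remain $\Oinf$-stable and that the $2$-periodicity then follows from Bott periodicity of topological $\K$-theory.
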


\begin{proof}
Since $C^*_\lambda(R\rtimes R^\times)$ is $\Oinf$-stable, there is an identification of the connective $\E$-theory group $\bu(\CC,C^*_\lambda(R\rtimes R^\times))\cong\bu(\CC,C^*_\lambda(R\rtimes R^\times)\prot\Oinf)$. By Remark \ref{OComp} we conclude that  $\bu(\CC,C^*_\lambda(R\rtimes R^\times)\prot\Oinf)\cong\E_0(\CC,C^*_\lambda(R\rtimes R^\times))$. One may identify the $\E$-theory of $C^*_\lambda(R\rtimes R^\times)$ naturally with its topological $\K$-theory. The results now follow from Equation \eqref{Ktheory} (the second one after suspension of $C^*_\lambda(R\rtimes R^\times)$).
\end{proof}

\begin{rem} \label{ax+bsum}
Owing to the $\Oinf$-stability of $C^*_\lambda(R\rtimes R^\times)$, Corollary \ref{KsummandCor} asserts that its noncommutative stable cohomotopy contains its topological $\K$-theory as a summand.
\end{rem}

\section{Nonconnective $\KQ$-theory and $\infty$-categorical topological $\TT$-duality}

We work exclusively in the category of nonunital (not necessarily unital) $k$-algebras, denoted by $\Alg_k$, where $k$ is a field of characteristic zero as in \cite{KQ}. The morphisms in $\Alg_k$ are $k$-algebra homomorphisms. At certain places in the sequel we are admittedly sloppy regarding size issues; however, as is common in $\K$-theory there will always be a small skeleton that comes to our rescue.

\subsection{Stable $\infty$-category valued noncommutative motives} For any $k$-algebra $A$ let $\tilde{A}$ denote its $k$-unitization with underlying $k$-linear space $A\oplus k$ and multiplication $(a,\lambda)(a',\lambda')= (aa' + \lambda a' + \lambda' a, \lambda\lambda')$. The category $\Mod(\tilde{A})$ is an abelian category. In \cite{KQ} we considered the following differential graded category $\hpfd({A})$: its objects are cochain complexes of right $\tilde{A}$-modules, such that each such $Y$ is homotopy equivalent to a complex $X$ satisfying 
\begin{enumerate}
 \item $X$ is homotopy equivalent to a strictly perfect complex,
 \item the canonical map $X\otimes_{\tilde{A}} A\map X$ is a homotopy equivalence.
\end{enumerate}

A $k$-linear cochain complex worth of morphisms between two such objects is obtained in a standard manner. We are going to consider an $\infty$-categorical variant of $\hpfd$-construction. There is a {\em differential graded nerve} $\N_\dg$ of a differential graded category (see Construction 1.3.1.6 of \cite{LurHigAlg}). For a differential graded category $\cC$ as a simplicial set $\N_\dg(\cC)$ can be described as follows: 

\begin{itemize}
 \item the $0$-simplices are the objects of $\cC$,
 \item the $1$-simplices are $\cup_{X,Y\in\mathrm{Ob}(\cC)} \{f\in\cC(X,Y)^0\,|\, df=0\}$.
\end{itemize}

In order to get an idea about the higher simplices let us note that the differential graded nerve $\N_\dg(\cC)$ (of a homologically graded differential graded category $\cC$) is obtained by applying the homotopy coherent nerve to a Kan complex enriched category constructed out of $\cC$. The Kan complex enriched category is obtained by first applying the truncation $\tau_{\geqslant 0}$ to the mapping complexes in $\cC$ and then applying the Dold--Kan construction.

Using the above construction we manufacture an $\infty$-category $\C_\infty(\tilde{A})$ out of the differential graded category of cochain complexes of $\Mod(\tilde{A})$, which turns out to be a stable $\infty$-category (see Proposition 1.3.2.10 of \cite{LurHigAlg}). By construction the objects (or $0$-simplices) of the differential graded nerve $\C_\infty(\tilde{A})$ are complexes of right $\tilde{A}$-modules. Let us set $\hpf({A})$ to be the stable $\infty$-subcategory of $\C_\infty(\tilde{A})$ spanned by the objects of $\hpfd({A})$. 

\begin{rem} \label{homotopyCompatible}
There is an isomorphism of homotopy categories $\h\hpf({A})\cong\h(\hpfd(A)):=\H^0(\hpfd({A}))$ (see Remark 1.3.1.11 of \cite{LurHigAlg}).
\end{rem}

\begin{rem}
 In the world of algebra the convention is to grade complexes cohomologically, whereas, in topology one considers typically homologically graded complexes. In \cite{KQ} the differential graded complexes were cohomologically graded as it built upon the formalism of \cite{KelDG}, whereas in \cite{LurHigAlg} they are homologically graded. The passage between the two is not too difficult (see, for instance, Definitions 3.1.6 and 3.3.1 of \cite{Faonte}).
\end{rem}

Let $\SSet$ denote the category of simplicial sets with the Joyal model structure, whose fibrant objects are precisely the $\infty$-categories. Recall that $\catinf$ is $\infty$-category of (small) $\infty$-categories, which is obtained by applying the homotopy coherent nerve to the Kan complex enriched category $\mathtt{Cat}_\infty^\Delta$, whose objects are (small) $\infty$-categories and the mapping space between $\cC$ and $\cD$ is given by the largest Kan complex contained in $\Fun(\cC,\cD)=\Map_{\SSet}(\cC,\cD)$. Let $\catex$ denote the $\infty$-subcategory of (small) stable $\infty$-categories with exact functors.

\begin{prop} \label{infFun}
 The association $A\mapsto\hpf({A})$ produces a functor $\N(\Alg_k)\functor\catex$.
\end{prop}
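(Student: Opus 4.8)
The goal is to upgrade the assignment $A\mapsto\hpf(A)$ to an honest functor of $\infty$-categories $\N(\Alg_k)\functor\catex$. The first thing I would do is recall that $\catex$ is modelled by the homotopy coherent nerve of the Kan complex enriched category of small stable $\infty$-categories and exact functors, so it suffices to produce a functor of Kan complex enriched categories (or, more flexibly, a functor between the underlying categories that is appropriately compatible with the simplicial/DG enrichments) and then apply $\N_\dg$ or $\N$ as appropriate. Concretely, I would start from the strictly functorial level: a $k$-algebra homomorphism $f:A\map B$ induces a unital homomorphism $\tilde f:\tilde A\map\tilde B$, hence a restriction-of-scalars functor and, more usefully, an extension-of-scalars functor $-\otimes_{\tilde A}\tilde B$ on cochain complexes of modules. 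This gives a DG functor $\Ch(\Mod(\tilde A))\map\Ch(\Mod(\tilde B))$, and passing to differential graded nerves a functor $\C_\infty(\tilde A)\map\C_\infty(\tilde B)$ of stable $\infty$-categories.

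The second step is to check that this base-change functor preserves the defining conditions (1) and (2) of $\hpfd$, so that it restricts to a functor $\hpf(A)\map\hpf(B)$. Condition (1) — homotopy equivalence to a strictly perfect complex — is preserved because $-\otimes_{\tilde A}\tilde B$ sends $\tilde A$ to $\tilde B$ and commutes with finite direct sums, shifts and mapping cones, hence sends strictly perfect complexes over $\tilde A$ to strictly perfect complexes over $\tilde B$, and preserves homotopy equivalences since it is additive and DG. Condition (2) — that $X\otimes_{\tilde A}A\map X$ is a homotopy equivalence — requires a small compatibility computation: for $Y:=X\otimes_{\tilde A}\tilde B$ one has $Y\otimes_{\tilde B}B\cong X\otimes_{\tilde A}\tilde B\otimes_{\tilde B}B\cong X\otimes_{\tilde A}B$, and one must compare this with $X\otimes_{\tilde A}A\otimes_A B$; using the short exact sequence $0\map A\map\tilde A\map k\map 0$ (and the corresponding one for $B$) together with the fact that $X\otimes_{\tilde A}A\map X$ is already a homotopy equivalence for objects of $\hpfd(A)$, one deduces that $Y$ again satisfies (2). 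Since $\hpf(A)$ is by definition the stable $\infty$-subcategory of $\C_\infty(\tilde A)$ spanned by the objects of $\hpfd(A)$, the functor $\C_\infty(\tilde A)\map\C_\infty(\tilde B)$ then restricts to an exact functor $\hpf(A)\map\hpf(B)$, exactness being automatic since base change commutes with cones and shifts.

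The third step is the coherence bookkeeping needed to turn this pointwise construction into a single functor out of $\N(\Alg_k)$: since $\N(\Alg_k)$ is the nerve of an ordinary category, it suffices to produce a strictly functorial (or pseudofunctorial, up to the standard rectification) assignment. Here I would exploit that extension of scalars along $\tilde f$ is strictly functorial in $f$ at the level of DG categories once one fixes a model — e.g. by using the bar/two-sided construction, or simply by choosing $-\otimes_{\tilde A}\tilde B$ with its canonical associativity isomorphisms and invoking the standard fact (as in \cite{LurHigAlg}, and used in essentially the same way in \cite{BluGepTab}) that a pseudofunctor into a $2$-category of categories rectifies to a functor into the $\infty$-category $\catex$ after applying $\N_\dg$. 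Since $\N(\Alg_k)$ is a $1$-category, one can even avoid higher coherence entirely by choosing functorial models for the unitization and the tensor product, so that composition is respected on the nose.

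\textbf{Main obstacle.} The routine parts are the two stability checks; the one genuinely delicate point is the coherence/strictification — making sure that the base-change functors compose strictly (or choosing the rectification carefully) so that one really lands in $\Fun(\N(\Alg_k),\catex)$ rather than merely obtaining a compatible family of functors. I expect the author handles this exactly as in \cite{BluGepTab}: fix functorial DG models, check that $A\mapsto\hpfd(A)$ is a functor of DG categories valued in the relevant (strict) category, and then postcompose with the differential graded nerve $\N_\dg$, using Proposition 1.3.2.10 of \cite{LurHigAlg} to land in stable $\infty$-categories and Remark \ref{homotopyCompatible} to identify homotopy categories. The verification that conditions (1) and (2) are stable under base change, while conceptually clear, is where the only real computation lives.
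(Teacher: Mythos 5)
Your proposal follows essentially the same route as the paper: unitize, base change via $-\otimes_{\tilde A}\tilde B$ at the level of DG categories of complexes, check that this preserves conditions (1) and (2) defining $\hpfd$, pass through the differential graded nerve and homotopy coherent nerve, and finally verify exactness. The one place where the paper is slightly more direct than your sketch of condition (2) is that it reduces immediately to showing $Y\otimes_{\tilde A}B\map Y\otimes_{\tilde A}\tilde B$ is a homotopy equivalence (for $Y$ strictly perfect) by tensoring the short exact sequence $0\map B\map\tilde B\map k\map 0$ with $Y$, whose cofiber $Y\otimes_{\tilde A}k$ is acyclic precisely because $Y\otimes_{\tilde A}A\map Y$ is a homotopy equivalence — rather than detouring through $X\otimes_{\tilde A}A\otimes_A B$ as you suggest.
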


\begin{proof}
 Clearly the construction $A\mapsto\tilde{A}$ is functorial. It sends any $k$-algebra homomorphism to a unital $k$-algebra homomorphism. Sending $\tilde{A}$ to the differential graded category of cochain complexes of right $\tilde{A}$-modules is also functorial up to a coherent natural isomorphism: $\tilde{A}\map\tilde{B}$ induces the map $-\otimes_{\tilde{A}}\tilde{B}$ between the differential graded categories. Application of the differential graded nerve produces the $\infty$-category $\C_\infty(\tilde{A})$ and lands inside $\SSet$ (see Proposition 1.3.1.20 of \cite{LurHigAlg}). Now applying the homotopy coherent nerve construction we get a functor $\N(\Alg_k)\functor\catinf$. Thus we have demonstrated that the association $A\mapsto\C_\infty(\tilde{A})$ produces a functor $\N(\Alg_k)\functor\catinf$. 
 
 Let us now verify that $-\otimes_{\tilde{A}}\tilde{B}:\C_\infty(\tilde{A})\map\C_\infty(\tilde{B})$ restricts to an arrow $\hpf({A})\map\hpf({B})$. It is easy to see that $-\otimes_{\tilde{A}}\tilde{B}$ sends strictly perfect complexes of right $\tilde{A}$-modules to strictly perfect complexes of right $\tilde{B}$-modules. The functor also preserves homotopy equivalences whence condition (1) above is preserved. We need to now check that the canonical map $Y\otimes_{\tilde{A}}\tilde{B}\otimes_{\tilde{B}} B\map Y\otimes_{\tilde{A}}\tilde{B}$ is a homotopy equivalence. Since the functor $-\otimes_{\tilde{A}}\tilde{B}$ preserves homotopy equivalences, we may assume that $Y$ is strictly perfect. Since $Y\otimes_{\tilde{A}}\tilde{B}\otimes_{\tilde{B}} B \cong Y\otimes_{\tilde{A}} B$ it suffices to show that $Y\otimes_{\tilde{A}} B\map Y\otimes_{\tilde{A}}\tilde{B}$ is a homotopy equivalence. Tensoring the short exact sequence of $\tilde{A}$-modules $0\map B\map\tilde{B}\map k\map 0$ with the strictly perfect complex $Y$, we are reduced to showing $Y\otimes_{\tilde{A}} (\tilde{A}/A)\cong Y\otimes_{\tilde{A}} k$ is acyclic. Since $Y\in\hpf({A})$ the canonical map $Y\otimes_{\tilde{A}} A\map Y$ is a homotopy equivalence. It follows that $Y\otimes_{\tilde{A}} k$ is acyclic. Consequently, we have a functor $\hpf:\N(\Alg_k)\functor\catinf$.

 By construction $\hpf({A})$ is a stable $\infty$-category. Thus it suffices to show that the functor $-\otimes_{\tilde{A}}\tilde{B}:\hpf({A})\functor\hpf({B})$ is exact. The homotopy cofiber sequences in the differential graded category $\hpfd({A})$ are equivalent to short exact sequences, that are split exact in each degree. They produce the cofiber sequences in the stable $\infty$-category $\hpf({A})$, which are clearly preserved by $-\otimes_{\tilde{A}}\tilde{B}:\hpf({A})\functor\hpf({B})$ whence the assertion follows.
\end{proof}

Let us recall from \cite{BluGepTab} that a diagram $\cA\map\cB\map\cC$ in $\catex$ is called {\em exact} if the sequence of stable presentable $\infty$-categories $\Ind(\cA)\map\Ind(\cB)\map\Ind(\cC)$ is exact, i.e., the composite is trivial, the functor $\Ind(\cA)\map\Ind(\cB)$ is fully faithful, and the canonical map $\Ind(\cB)/\Ind(\cA)\map\Ind(\cC)$ is an equivalence. Note that in \cite{BluGepTab} the treatment is more general as the notion of exactness is considered in $\mathtt{Cat_\infty^{ex(\kappa)}}$ for any regular cardinal $\kappa$, i.e., the $\infty$-category of $\kappa$-cocomplete small stable $\infty$-categories and $\kappa$-small colimit preserving functors.

\begin{lem} \label{homfib}
Let $0\map A\map B\map C\map 0$ be a short exact sequence in $\Alg_k$ with $B,C$ unital and $A^2=A$. If, in addition, $A$ is a flat $B$-module, then the induced diagram in $\catex$  $\hpf({A})\map\hpf(B)\map\hpf(C)$ is exact.
\end{lem}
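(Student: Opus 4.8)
The plan is to verify the three conditions in the definition of an exact sequence in $\catex$ recalled just above, namely that passing to $\Ind$-categories yields an exact sequence $\Ind(\hpf(A))\map\Ind(\hpf(B))\map\Ind(\hpf(C))$ of stable presentable $\infty$-categories. Since these $\Ind$-completions are the derived categories of perfect complexes in the appropriate sense, the cleanest route is to identify $\Ind(\hpf(A))$ with (a localizing subcategory of) the derived $\infty$-category $\D(\tilde A)$ of right $\tilde A$-modules, and likewise for $B$ and $C$, and then reduce the statement to a Thomason--Neeman--type localization theorem for derived categories of rings. Concretely, I would first observe that by construction $\hpf(A)$ is the full stable subcategory of $\C_\infty(\tilde A)$ spanned by those perfect complexes $X$ for which $X\otimes_{\tilde A}A\map X$ is an equivalence; using the idempotency hypothesis $A^2=A$ and flatness of $A$ over $B$ (hence over $\tilde B$, and via restriction a condition one can track along $\tilde A\to\tilde B$), the non-unital ideal $A$ behaves like an $H$-unital/idempotent ideal, so $A$ is a (homotopy) idempotent $\tilde A$-algebra and $\hpf(A)$ is precisely the category of perfect $\tilde A$-modules supported on the open complement of $\Spec(k)=\Spec(\tilde A/A)$.

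Next I would set up the two functors. The map $\tilde B\to\tilde C$ is surjective with kernel (the image of) $A$, and since $B,C$ are unital, $\tilde B\cong B\times k$ and $\tilde C\cong C\times k$, so base change along $\tilde B\to\tilde C$ kills exactly the $A$-local part; this should give that $\hpf(B)\map\hpf(C)$ is a Verdier quotient (Karoubi-up to idempotent completion) with kernel the perfect complexes of $\tilde B$-modules that are acyclic after $-\otimes_{\tilde B}C$, i.e. those supported on $\Spec(\tilde B/A)$. The flatness hypothesis is what makes $-\otimes_{\tilde B}^{\mathbb L}C = -\otimes_{\tilde B}C$ and keeps the short exact sequence $0\to A\to\tilde B\to\tilde C\to 0$ derived-exact, so that the sequence of module categories $\Ind(\hpf(A))\to\Ind(\hpf(B))\to\Ind(\hpf(C))$ is exactly the localization sequence
\[
\D_{A}(\tilde B)\longrightarrow \D(\tilde B)\longrightarrow \D(\tilde C),
\]
where $\D_A$ denotes the full subcategory of complexes with $A$-torsion (equivalently $C$-acyclic) homology. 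Then I would invoke the standard recollement/localization theorem for derived categories of rings (Neeman--Thomason--Trobaugh, or its $\infty$-categorical form as used in \cite{BluGepTab}) together with the identification $\Ind(\hpf(A))\simeq\D_A(\tilde B)$ — this last equivalence being where $A^2=A$ (idempotency of the ideal) and flatness are genuinely used, since they guarantee that $\D(\tilde A)$-modules that are "unital over $A$" match the $\tilde B$-modules supported on the closed set cut out by $A$, via extension/restriction of scalars along $\tilde A\to\tilde B$.

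The main obstacle I anticipate is precisely establishing the equivalence $\Ind(\hpf(A))\simeq\D_A(\tilde B)$ (compatible with the forgetful functor to $\D(\tilde B)$), i.e. controlling the non-unital ideal $A$ and its unitization well enough that "perfect $\tilde A$-modules that are $A$-unital" coincide with "perfect $\tilde B$-modules supported on $V(A)$". The hypotheses $A^2=A$ and $A$ flat over $B$ are exactly tailored to make $A$ behave like an $H$-unital idempotent ideal, so that $\tilde A\to\tilde B$ induces a fully faithful embedding on the relevant subcategories; nonetheless checking full faithfulness and essential surjectivity on $\Ind$-categories will require a careful compact-generation argument (the generator on the $A$-side maps to a compact generator of the torsion subcategory on the $\tilde B$-side), plus verifying that the composite $\hpf(A)\to\hpf(B)\to\hpf(C)$ is null — which is immediate since any $X\in\hpf(A)$ satisfies $X\otimes_{\tilde A}k\simeq 0$ (shown in the proof of Proposition \ref{infFun}) and hence $X\otimes_{\tilde A}\tilde C\simeq X\otimes_{\tilde A}(k\times C\text{-part})$ dies after the further base change to $C$. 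Once the identification with a localization sequence of ring-derived categories is in place, exactness in the sense of \cite{BluGepTab} follows formally.
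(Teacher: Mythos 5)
Your route is genuinely different from the paper's, and it is not a complete proof. The paper's argument is essentially a reduction: it cites Lemma~2.14 of \cite{KQ}, which already establishes that $\hpfd(A)\map\hpfd(B)\map\hpfd(C)$ is an exact sequence of dg-categories (i.e.\ the triangulated sequence $\H^0(\hpfd(A))\map\H^0(\hpfd(B))\map\H^0(\hpfd(C))$ is exact after idempotent completion), then transfers this to $\catex$ via Proposition~5.15 of \cite{BluGepTab}, which says exactness in $\catex$ can be checked on homotopy triangulated categories, and the identification $\h\hpf(A)\cong\H^0(\hpfd(A))$ of Remark~\ref{homotopyCompatible}. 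You instead try to re-derive the underlying localization sequence from scratch by identifying $\Ind(\hpf(-))$ with localizing subcategories of $\D(\tilde{A})$, $\D(\tilde{B})$, $\D(\tilde{C})$ and invoking a Thomason--Neeman--type theorem. That is a reasonable plan, but as you yourself flag, the crux of it --- the equivalence $\Ind(\hpf(A))\simeq\D_A(\tilde{B})$ compatible with the forgetful functor --- is precisely where all the content of Lemma~2.14 of \cite{KQ} lives, and you do not establish it. Stating that the hypotheses "are exactly tailored" to make it work is not a proof; without that identification the appeal to a recollement on $\D(\tilde B)$ does not compute $\Ind(\hpf(A))$.

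Two smaller points worth flagging. First, you attribute the good behavior of $A$ to it being ``$H$-unital/idempotent''; but $A^2=A$ together with flatness of $A$ over $B$ gives (as the paper notes immediately after the lemma) that $B\map C$ is a \emph{homological epimorphism}, i.e.\ $C\otimes_B C\cong C$ and $\Tor^B_n(C,C)=0$ for $n\geqslant 1$. This is the condition actually used, and it is not the same as $H$-unitality of $A$; keeping these straight matters when you try to match ``$A$-unital perfect $\tilde A$-modules'' with ``$\tilde B$-perfect complexes supported on $V(A)$''. Second, the geometric language of open/closed complements in $\Spec$ is a useful mnemonic for $\tilde A$ and $\tilde B$ commutative but does not literally apply here, since $A$, $B$, $C$ are arbitrary nonunital $k$-algebras; you would need to phrase the torsion/complete recollement purely module-theoretically. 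If you want a self-contained argument along your lines, the right move is to reproduce the proof of Lemma~2.14 of \cite{KQ} at the triangulated level and then invoke BGT Proposition~5.15 as the paper does, rather than trying to re-prove it inside $\Prl$ by identifying $\Ind$-completions with derived categories of rings.
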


\begin{proof}
 Lemma 2.14 of \cite{KQ} shows that the diagram $\hpfd({A})\map\hpfd(B)\map\hpfd(C)$ is an exact sequence of differential graded categories. This amounts to saying that the associated sequence of triangulated categories $\H^0(\hpfd(A))\map\H^0(\hpfd(B))\map\H^0(\hpfd(C))$ is exact, i.e., the composite is trivial, the functor $\H^0(\hpfd(A))\map\H^0(\hpfd(B))$ is fully faithful, and the canonical map $\H^0(\hpfd(B))/\H^0(\hpfd(A))\map\H^0(\hpfd(C))$ is an equivalence after idempotent completion. Proposition 5.15 of \cite{BluGepTab} says that a diagram $\cA\map\cB\map\cC$ in $\catex$ is exact if and only if the sequence of triangulated categories $\h\cA\map\h\cB\map\h\cC$ is exact, i.e., the composite is trivial, the functor $\h\cA\map\h\cB$ is fully faithful, and the canonical map $\h\cB/\h\cA\map\h\cC$ is an equivalence after idempotent completion. The assertion now follows from Remark \ref{homotopyCompatible} above.
\end{proof}

\begin{rem}
 We have used Lemma 2.14 of \cite{KQ} in the above argument, which asserts that if $0\map A\map B\map C\map 0$ is a short exact sequence in $\Alg_k$ with $B,C$ unital and $A^2=A$, then $\hpfd({A})\map\hpfd(B)\map\hpfd(C)$ is a short exact sequence of differential graded categories. Unfortunately, the assumption that $A$ is a flat $B$-module was not mentioned in Lemma 2.14 of \cite{KQ}. Together with the assumption $A^2=A$ this hypothesis implies that the homomorphism $B\overset{}{\map} C$ is a {\em homological epimorphism}, i.e., $C\otimes_B C\cong C$ and $\mathrm{Tor}^{B}_n(C,C) = 0 $ for $n\geqslant 1$. These hypotheses are satisfied by any short exact sequence of $C^*$-algebras (see the Example \ref{homEpi} below). 
\end{rem}

\begin{ex} \label{homEpi}
 It is known that every $C^*$-algebra is {\em universally flat} (see Theorem 2 of \cite{WodFunAna}); indeed, Theorem B of \cite{WodResolution} shows that any Banach algebra with a bounded approximate unit is universally flat and every $C^*$-algebra has a bounded approximate unit. It follows that, if $0\map A\map B\map C\map 0$ is short exact sequence of $C^*$-algebras, then $A^2=A$ (Cohen--Hewitt Factorization Theorem) and $A$ is a flat $B$-module. 
\end{ex}

In \cite{KQ} the $\KQ$-theory of $A\in\Alg_k$ was defined to be the (connective) algebraic $\K$-theory of the $k$-linear differential graded category $\hpfd(A)$. Any differential graded category $\cC$ has an underlying ordinary category $\cC'$ with morphisms given by ${\cC'}(X,Y)=\{f\in\cC(X,Y)^0\;|\; df =0\}$. The underlying category of the differential graded category $\hpfd(A)$ will be denoted by $\hpfw(A)$. There is a Waldhausen category structure on $\hpfw(A)$. One way to see this is as follows: The category of unbounded cochain complexes of $\tilde{A}$-modules $\Ch(\tilde{A})$ admits a model structure with cohomology isomorphisms as weak equivalences and degreewise epimorphisms as fibrations (see Theorem 2.3.11 of \cite{Hov}). A map $i:X\map Y$ in $\Ch(\tilde{A})$ is a cofibration if it is a degreewise split monomorphism with cofibrant cokernel (see Proposition 2.3.9 of \cite{Hov}). The subcategory of perfect complexes $\Perf(\tilde{A})$, which are the compact objects in $\Ch(\tilde{A})$ \cite{BokNee}, is a complete Waldhausen subcategory of the model category $\Ch(\tilde{A})$ in the sense of \cite{DugShi}. The category $\hpfw(A)$ is the full subcategory of $\Perf(\tilde{A})$ consisting of cochain complexes $X$ that are homotopy equivalent to strictly perfect complexes and satisfy $X\otimes_{\tilde{A}} A\map X$ is a homotopy equivalence. The strictly perfect complexes of $\tilde{A}$-modules are cofibrant in the model category $\Ch(\tilde{A})$ and the weak equivalences between such complexes are precisely the homotopy equivalences. One can now verify that the Waldhausen category structure on $\Perf(\tilde{A})$ restricts to a Waldhausen category structure on $\hpfw(A)$. Summarising, we have
 
\begin{lem} \label{Waldstr}
The category $\hpfw(A)$ is a Waldhausen subcategory of the model category $\Ch(\tilde{A})$ and the canonical inclusion $\hpfw(A)\hookrightarrow\Perf(\tilde{A})$ is Waldhausen exact.
\end{lem}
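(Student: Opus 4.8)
The plan is to verify the two assertions of Lemma \ref{Waldstr} separately, building on the model category structure on $\Ch(\tilde A)$ recalled just above the statement. First I would recall precisely what a Waldhausen subcategory of a model category means in the sense of \cite{DugShi}: one takes the cofibrations and weak equivalences of $\Perf(\tilde A)$ (degreewise split monomorphisms with cofibrant cokernel, and cohomology isomorphisms respectively) and restricts them to the full subcategory $\hpfw(A)$, and one must check the Waldhausen axioms — closure of $\hpfw(A)$ under the pushout of a cofibration, presence of the zero object, the cofibration/weak-equivalence axioms, and the gluing lemma. The key input is that $\hpfw(A)$ is defined by two conditions on an object $X$: (i) $X$ is homotopy equivalent to a strictly perfect complex, and (ii) the canonical map $X\otimes_{\tilde A}A\map X$ is a homotopy equivalence. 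So the whole argument reduces to showing these two conditions are stable under the relevant constructions.

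The key steps, in order, would be: (1) Zero object: the zero complex trivially satisfies (i) and (ii), and it is already known to be compact and cofibrant, so it lies in $\hpfw(A)\subseteq\Perf(\tilde A)$. (2) Closure under cofiber sequences/pushouts along cofibrations: given a cofibration $X\hookrightarrow Y$ in $\hpfw(A)$, its cokernel $Z$ is cofibrant and degreewise a direct summand, hence $Y\cong X\oplus Z$ degreewise; since $-\otimes_{\tilde A}A$ is additive and right exact, and $X,Y\in\hpfw(A)$, a two-out-of-three argument on the map of cofiber sequences $X\otimes_{\tilde A}A\map Y\otimes_{\tilde A}A\map Z\otimes_{\tilde A}A$ versus $X\map Y\map Z$ forces $Z\otimes_{\tilde A}A\map Z$ to be a homotopy equivalence, giving (ii) for $Z$; for (i) one observes that the cofiber of a map of strictly perfect complexes is again (homotopy equivalent to) a strictly perfect complex, so a standard replacement argument handles $Z$. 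The same additivity/two-out-of-three reasoning handles arbitrary pushouts along cofibrations within $\Perf(\tilde A)$, since $\hpfw(A)$ is closed under such pushouts exactly when it is closed under the corresponding cofibers. (3) Weak equivalences: conditions (i) and (ii) are each plainly invariant under cohomology isomorphism between perfect complexes — for (i) because being homotopy equivalent to a strictly perfect complex is a quasi-isomorphism-invariant property of perfect complexes, and for (ii) by functoriality of $-\otimes_{\tilde A}A$ and two-out-of-three — so the restricted weak equivalences satisfy the axioms, including gluing, inherited from $\Perf(\tilde A)$. (4) Finally, the inclusion $\hpfw(A)\hookrightarrow\Perf(\tilde A)$ is Waldhausen exact essentially by construction, since we defined the cofibrations and weak equivalences of $\hpfw(A)$ by restriction; one only needs that $\hpfw(A)$ is closed under the pushouts that define cofibration sequences, which is step (2), and that the inclusion preserves the zero object, which is step (1).

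The main obstacle I expect is step (2): one must be careful that the two defining conditions genuinely interact well with the model-categorical cofibrations, in particular that passing to cokernels of split monomorphisms preserves the property "homotopy equivalent to a strictly perfect complex" — this is where the hypothesis that the ambient complexes are perfect and that strictly perfect complexes are cofibrant is used, so that homotopy equivalence and quasi-isomorphism coincide and one can replace a cofiber by an honest strictly perfect complex. The tensor-with-$A$ condition (ii) is comparatively soft since $-\otimes_{\tilde A}A$ is additive and exact on degreewise-split sequences, so the two-out-of-three property in the homotopy category does all the work; the subtlety is purely on the "strictly perfect" side. Everything else — the zero object, the weak equivalence axioms, the gluing lemma, and exactness of the inclusion — is then routine, being inherited verbatim from the already-established Waldhausen structure on $\Perf(\tilde A)$.
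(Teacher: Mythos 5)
Your proposal fills in exactly the verification that the paper leaves implicit: the paper sets up the projective model structure on $\Ch(\tilde A)$ and the Waldhausen structure on $\Perf(\tilde A)$, notes that strictly perfect complexes are cofibrant and that weak equivalences between them coincide with homotopy equivalences, and then simply states ``one can now verify that the Waldhausen category structure on $\Perf(\tilde A)$ restricts to a Waldhausen category structure on $\hpfw(A)$.'' So in substance you are taking the same route, just writing out the axiom-checking that the paper elides.

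Two points of care. First, the parenthetical claim in your step (2) that ``$\hpfw(A)$ is closed under such pushouts exactly when it is closed under the corresponding cofibers'' is not literally correct: closure under pushouts of cofibrations is closure under a class of (degreewise split) \emph{extensions} of $Y/X$ by an arbitrary $W$, not merely closure under taking cofibers ($W=0$). Fortunately your own argument already treats the general pushout correctly --- the degreewise split short exact sequence $0\to W\to W\cup_X Y\to Y/X\to 0$ stays short exact after $-\otimes_{\tilde A} A$ (tensor preserves split exactness), the three-term comparison forces the middle vertical to be a quasi-isomorphism, and cofibrancy of the objects involved upgrades this to a homotopy equivalence; for condition (i) one observes $W\cup_X Y$ is the mapping cone of $(Y/X)[-1]\to W$, and cones of maps between complexes each homotopy equivalent to strictly perfect are again homotopy equivalent to strictly perfect. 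So the substance is fine; only the slogan is off. Second, in step (3) the statement that conditions (i) and (ii) are quasi-isomorphism-invariant should be flagged as using precisely the paper's remark that strictly perfect complexes are cofibrant and all objects of $\Ch(\tilde A)$ are fibrant in the projective model structure, so that between objects of $\hpfw(A)$ a quasi-isomorphism is already a chain homotopy equivalence; without cofibrancy these conditions would not be stable under quasi-isomorphism.
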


Let $\mathtt{Wald}$ denote the category of small Waldhausen categories with Waldhausen exact functors. One may apply the Waldhausen (connective) $\K$-theory functor $\bK^w:\mathtt{Wald}\functor\Sp$ to $\hpfw(A)$ to define its $\K$-theory. In Lemma 2.12 of \cite{KQ} we showed that $\KQ_i(A)\cong\pi_i(\bK^w(\hpfw(A)))$. Using the material from Section 7 of \cite{BluGepTab} we can define the connective $\K$-theory of the stable $\infty$-category $\hpf(A)$. Let us denote this connective $\K$-theory functor of small stable $\infty$-categories by $\bKc:\catex\functor\Sp$. 

\begin{lem} \label{connWald}
 There is a natural equivalence of spectra $\bK^w(\hpfw(A))\overset{\sim}{\map}\bKc(\hpf(A))$.
\end{lem}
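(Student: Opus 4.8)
The plan is to compare the two $\K$-theory constructions by showing that they are both obtained by applying (equivalent) $\K$-theory functors to inputs that carry equivalent homotopical data, and that the relevant comparison functors respect this data. Concretely, on the Waldhausen side we have $\bK^w(\hpfw(A))$, where $\hpfw(A)$ is the underlying Waldhausen category of the dg category $\hpfd(A)$ (Lemma \ref{Waldstr}); on the $\infty$-categorical side we have $\bKc(\hpf(A))$, where $\hpf(A)$ is the stable $\infty$-subcategory of $\C_\infty(\tilde A)$ spanned by the objects of $\hpfd(A)$. The key structural fact is Remark \ref{homotopyCompatible}: there is an equivalence of homotopy categories $\h\hpf(A)\simeq\H^0(\hpfd(A))=\h\hpfw(A)$, and in fact more is true at the level of the underlying $\infty$-categories.

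First I would invoke the general comparison between Waldhausen $\K$-theory of a Waldhausen category with functorial factorizations (or, more precisely, one arising from a cofibration category / category of cofibrant objects in a model category) and the $\K$-theory of the associated $\infty$-category, as developed in \cite{BluGepTab} (see their Section 7, and the surrounding discussion comparing $\bK^w$ with $\bKc$). The relevant statement is that for a complicial biWaldhausen category $\mathcal{W}$ (such as $\hpfw(A)$, sitting as a Waldhausen subcategory of the model category $\Ch(\tilde A)$ by Lemma \ref{Waldstr}), the Waldhausen $\K$-theory spectrum $\bK^w(\mathcal{W})$ is naturally equivalent to $\bKc$ of the stable $\infty$-category obtained by inverting the weak equivalences, i.e.\ the dg-nerve of the dg localization. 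Thus I would identify the $\infty$-category underlying $\hpfw(A)$ (localized at homotopy equivalences) with $\hpf(A)$. Since $\hpfw(A)$ consists of complexes homotopy equivalent to strictly perfect ones with the homotopy equivalences as weak equivalences, its localization is computed by the dg-nerve of $\hpfd(A)$, which by construction is precisely $\hpf(A)$. This gives a natural equivalence $\bK^w(\hpfw(A))\overset{\sim}{\map}\bKc(\hpf(A))$.

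The steps, in order: (i) recall that $\hpfd(A)$ is a pretriangulated dg category whose dg-nerve is the stable $\infty$-category $\hpf(A)$, and that its $\H^0$ is $\h\hpfw(A)$ (Remark \ref{homotopyCompatible}); (ii) recall from Lemma \ref{Waldstr} that $\hpfw(A)$ is a Waldhausen category with weak equivalences the homotopy equivalences, arising as a Waldhausen subcategory of the model category $\Ch(\tilde A)$, so that the Waldhausen $\K$-theory agrees with the $\K$-theory of the associated simplicial/$\infty$-localization; (iii) apply the comparison theorem of \cite{BluGepTab} (Section 7) identifying $\bK^w$ of such a Waldhausen category with $\bKc$ of its $\infty$-categorical localization; (iv) identify that localization with $\hpf(A)$ via the dg-nerve construction, using that the weak equivalences in $\hpfw(A)$ are exactly the maps inverted in passing from $\hpfd(A)$ to its dg-nerve; (v) check naturality in $A$, which follows because the functor $A\mapsto\hpf(A)$ of Proposition \ref{infFun} and the corresponding functor $A\mapsto\hpfw(A)$ are compatible under these identifications, both being induced by $-\otimes_{\tilde A}\tilde B$.

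The main obstacle I expect is step (iii)–(iv): pinning down the precise form of the Waldhausen-versus-$\infty$-categorical $\K$-theory comparison that applies here, and verifying that the $\infty$-categorical localization of the Waldhausen category $\hpfw(A)$ at its homotopy equivalences is genuinely equivalent (not merely at the level of homotopy categories) to the stable $\infty$-category $\hpf(A)$ produced by the dg-nerve. This requires knowing that the dg-nerve of a pretriangulated dg category models the correct $\infty$-categorical localization, which is part of the formalism of Construction 1.3.1.6 and Proposition 1.3.1.20 of \cite{LurHigAlg} together with the comparison results in \cite{BluGepTab}; once these are in place the argument is essentially formal, and naturality is automatic.
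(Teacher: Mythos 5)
Your proposal follows essentially the same route as the paper: the paper's proof is the one-liner that, since Lemma \ref{Waldstr} exhibits $\hpfw(A)$ as a Waldhausen subcategory of the model category $\Ch(\tilde{A})$, the claim follows directly from Corollary 7.12 of \cite{BluGepTab}. Your additional steps (iv)--(v), identifying the $\infty$-categorical localization of $\hpfw(A)$ with the dg-nerve $\hpf(A)$ and checking naturality, are exactly what is packaged inside that corollary (together with the functoriality from Proposition \ref{infFun}), so you have correctly reconstructed the argument, just with the internal machinery of Corollary 7.12 unfolded.
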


\begin{proof}
 Since in the previous Lemma we showed that $\hpfw(A)$ is a Waldhausen subcategory of a model category, the assertion follows from Corollary 7.12 of \cite{BluGepTab}.
\end{proof}

\begin{rem}
 We obtain yet another description of $\KQ$-theory, viz., $$\KQ_i(A)\cong\pi_i(\bKc(\hpf(A))).$$
\end{rem}

Using the delooping machinery of \cite{SchNegK} one can define the nonconnective $\K$-theory spectrum. This task was carried out in Section 9 of \cite{BluGepTab} in the setting of stable $\infty$-categories. Let us denote the nonconnective $\K$-theory functor by $\bKn:\catex\functor\Sp$. 

\begin{prop}
 Let $0\map A\map B\map C\map 0$ be a short exact sequence with $B,C$ unital, $A^2=A$, and $A$ a flat $B$-module. Then there is a cofiber sequence in $\Sp$ $$\bKn(\hpf(A))\map\bKn(\hpf(B))\map\bKn(\hpf(C)).$$
\end{prop}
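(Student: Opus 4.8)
The plan is to deduce the cofiber sequence directly from the localization theorem for nonconnective $\K$-theory in the stable $\infty$-categorical setting, once the relevant sequence of stable $\infty$-categories has been recognized as exact.

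First I would invoke Lemma \ref{homfib}: the hypotheses on $0\map A\map B\map C\map 0$ — namely $B,C$ unital, $A^2=A$, and $A$ flat over $B$ — are exactly those required there, so the diagram $\hpf(A)\map\hpf(B)\map\hpf(C)$ in $\catex$ is exact in the sense of \cite{BluGepTab}, i.e. the composite is trivial, $\Ind(\hpf(A))\map\Ind(\hpf(B))$ is fully faithful, and the canonical map $\Ind(\hpf(B))/\Ind(\hpf(A))\map\Ind(\hpf(C))$ is an equivalence. By Proposition \ref{infFun} the maps in question are genuine morphisms in $\catex$ functorially associated with the algebra homomorphisms.

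Next I would recall that nonconnective $\K$-theory $\bKn:\catex\functor\Sp$, constructed via the delooping machinery of \cite{SchNegK} as carried out in Section 9 of \cite{BluGepTab}, is a localizing invariant: it sends every exact sequence of small stable $\infty$-categories to a (co)fiber sequence of spectra. Applying $\bKn$ to the exact sequence from the previous step therefore yields the cofiber sequence
$$\bKn(\hpf(A))\map\bKn(\hpf(B))\map\bKn(\hpf(C))$$
in $\Sp$, as desired. By Lemma \ref{connWald} the connective cover of this sequence recovers the Waldhausen-theoretic picture for $\hpfw$, so the statement is compatible with the $\KQ$-theory of \cite{KQ}.

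The main obstacle — and essentially the only substantive point — is the exactness of $\hpf(A)\map\hpf(B)\map\hpf(C)$, which is precisely where the flatness of $A$ over $B$ enters (cf. the remark following Lemma \ref{homfib}); this has already been packaged into Lemma \ref{homfib} via the dg-categorical computation of \cite{KQ}, the comparison of exactness criteria (Proposition 5.15 of \cite{BluGepTab}), and Remark \ref{homotopyCompatible}. Once that input is available the passage to nonconnective $\K$-theory is formal, since localizing invariants are by construction insensitive to idempotent completion, so the potential discrepancy between $\hpf(C)$ and $\Ind(\hpf(B))/\Ind(\hpf(A))$ up to Morita equivalence causes no trouble. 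The only mild check is that $\bKn$ as defined on $\catex$ (rather than on idempotent-complete stable $\infty$-categories) retains the localization property on the nose, which is exactly what the nonconnective delooping of \cite{SchNegK,BluGepTab} guarantees.
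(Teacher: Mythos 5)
Your proof is correct and follows exactly the paper's argument: invoke Lemma \ref{homfib} to obtain exactness of $\hpf(A)\to\hpf(B)\to\hpf(C)$ in $\catex$, then apply the localization property of nonconnective $\K$-theory from Section 9 of \cite{BluGepTab}. The additional remarks on Morita invariance and the connective cover are consistent but not needed; the paper's proof is this two-step deduction.
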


\begin{proof}
It follows from Lemma \ref{homfib} that $\hpf({A})\map\hpf(B)\map\hpf(C)$ is exact in $\catex$. The assertion now follows since nonconnective algebraic $\K$-theory satisfies localization (see Theorem 9.8 of \cite{BluGepTab}).
\end{proof}

In \cite{BluGepTab} the authors constructed the univeral localizing invariant $\uloc: \catex\functor \mloc$ and proposed $\mloc$ as a candidate for noncommutative motives in the setting of stable $\infty$-categories (see Theorem 8.7 of \cite{BluGepTab}). We set $\cM_\infty(A):=\uloc\circ\hpf(A):\Alg_k\functor\mloc$ and call it the {\em stable $\infty$-category valued noncommutative motive of $A$}. In fact, the $\infty$-category $\mloc$ is itself stable and the exact sequences in $\catex$ produce cofiber sequences in $\mloc$. It follows from Lemma \ref{homfib} that 

\begin{lem} \label{Mexc}
For any short exact sequence in $0\map A\map B\map C\map 0$ with $B,C$ unital, $A^2=A$, and $A$ a flat $B$-module, there is cofiber sequence in $\mloc$ $$\cM_\infty(A)\map\cM_\infty(B)\map\cM_\infty(C).$$
\end{lem}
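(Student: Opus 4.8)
The plan is to deduce Lemma \ref{Mexc} directly from Lemma \ref{homfib} together with the universal property of the localizing invariant $\uloc:\catex\functor\mloc$ established in \cite{BluGepTab}. First I would recall the definition of $\cM_\infty$: by construction $\cM_\infty(A)=\uloc\circ\hpf(A)$, where $\hpf:\N(\Alg_k)\functor\catex$ is the functor produced in Proposition \ref{infFun}. Applying this functor to the short exact sequence $0\map A\map B\map C\map 0$ yields the diagram $\hpf(A)\map\hpf(B)\map\hpf(C)$ in $\catex$, and Lemma \ref{homfib} — whose hypotheses ($B,C$ unital, $A^2=A$, $A$ a flat $B$-module) are exactly those assumed here — tells us that this diagram is exact in the sense of \cite{BluGepTab}.

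The second step is to invoke the defining property of a localizing invariant: by Theorem 8.7 of \cite{BluGepTab}, the functor $\uloc:\catex\functor\mloc$ sends every exact sequence in $\catex$ to a cofiber sequence in $\mloc$ (equivalently, $\mloc$ is stable and $\uloc$ preserves the relevant pushout/pullback squares). Hence applying $\uloc$ to the exact sequence $\hpf(A)\map\hpf(B)\map\hpf(C)$ produces a cofiber sequence
\[
\cM_\infty(A)=\uloc(\hpf(A))\map\uloc(\hpf(B))=\cM_\infty(B)\map\uloc(\hpf(C))=\cM_\infty(C)
\]
in $\mloc$, which is precisely the assertion of the lemma. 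Since $\mloc$ is stable, this is simultaneously a fiber sequence, so no further argument about exactness on the other side is needed.

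There is essentially no obstacle here: the lemma is a formal consequence of two inputs already in place, namely the exactness statement of Lemma \ref{homfib} and the localizing property of $\uloc$. The only point deserving a word of care is that the class of inputs to $\uloc$ must match — i.e. that Lemma \ref{homfib} delivers exactness in $\catex$ in exactly the form (small stable $\infty$-categories, exact functors, $\Ind$-level exactness) demanded by the universal property in \cite{BluGepTab}; this compatibility was already arranged in the discussion preceding Lemma \ref{homfib}, where the notion of exactness in $\catex$ was recalled verbatim from \cite{BluGepTab}. So the proof is a one-line application, and I would simply write: apply $\uloc$ to the exact sequence furnished by Lemma \ref{homfib} and use that $\uloc$ carries exact sequences to cofiber sequences (Theorem 8.7 of \cite{BluGepTab}).
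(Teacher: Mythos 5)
Your proof is correct and follows exactly the paper's own reasoning: apply Lemma \ref{homfib} to get exactness in $\catex$, then use that the universal localizing invariant $\uloc$ sends exact sequences to cofiber sequences in the stable $\infty$-category $\mloc$ (Theorem 8.7 of \cite{BluGepTab}). The paper presents this as an immediate consequence of Lemma \ref{homfib} in the two sentences just before the lemma's statement; you have simply spelled out the same one-line argument in more detail.
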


For any $A\in\Alg_k$ let $M_n(A)$ denote the $k$-algebra of $n\times n$-matrices over $A$. There is a canonical corner embedding $A\map M_n(A)$ sending $a\mapsto a(e_{11})$.

\begin{prop}
 For any $A\in\Alg_k$ there is an equivalence $\cM_\infty(A)\simeq\cM_\infty(M_n(A))$ induced by the corner embedding $A\map M_n(A)$. 
\end{prop}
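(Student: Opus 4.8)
The plan is to deduce the statement from the Morita invariance of the universal localizing invariant $\uloc\colon\catex\functor\mloc$ of \cite{BluGepTab}, once one knows that the corner embedding induces a Morita equivalence on the level of $\hpf$. Recall that $\cM_\infty=\uloc\circ\hpf$. By Proposition \ref{infFun} the corner embedding $\iota\colon A\hookrightarrow M_n(A)$, $a\mapsto a\,e_{11}$, gives an exact functor $\iota_*\colon\hpf(A)\functor\hpf(M_n(A))$; on the underlying differential graded categories it is extension of scalars $-\otimes_{\tilde A}\widetilde{M_n(A)}$ along the unital $k$-algebra homomorphism $\tilde\iota\colon\tilde A\functor\widetilde{M_n(A)}$, $(a,\lambda)\mapsto(a\,e_{11},\lambda)$. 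Localizing invariants are invariant under Morita equivalences, i.e. $\uloc$ inverts every exact functor that induces an equivalence of idempotent completions (\cite{BluGepTab}). Hence it is enough to prove that $\iota_*$ is a Morita equivalence, equivalently that the induced functor $\Ind(\hpf(A))\functor\Ind(\hpf(M_n(A)))$ is an equivalence of stable presentable $\infty$-categories.

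For this I would run the classical Morita argument, adapted to the unitizations. First reinterpret condition (2) in the definition of $\hpf$: for a perfect complex $X$ of $\tilde A$-modules the canonical map $X\otimes_{\tilde A}A\functor X$ is an equivalence precisely when $X\otimes^{\mathbf L}_{\tilde A}k\simeq 0$ with $k=\tilde A/A$, so that $\hpf(A)=\ker\bigl(\Perf(\tilde A)\xrightarrow{-\otimes^{\mathbf L}_{\tilde A}k}\Perf(k)\bigr)$, and analogously for $M_n(A)$. Then I would exhibit the row and column bimodules over the unitizations and check that the associated tensor functors send $\hpf(A)$ and $\hpf(M_n(A))$ into one another and are mutually inverse up to natural equivalence once restricted to these subcategories — the discrepancy between the bimodule tensor products and the would-be unital identities lives in the part annihilated by $-\otimes^{\mathbf L}k$, hence vanishes on $\hpf$ — and that one of these functors is $\iota_*$. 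Passing to $\Ind$-completions (equivalently, idempotent completions) then produces the desired Morita equivalence, and $\uloc$ turns it into the asserted equivalence $\cM_\infty(A)\simeq\cM_\infty(M_n(A))$.

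The main obstacle is that $M_n(-)$ does not commute with unitization: for $n\geqslant 2$ the algebra $\widetilde{M_n(A)}$ is the proper subalgebra $M_n(A)+k\cdot I_n$ of $M_n(\tilde A)$, so the textbook equivalence $\Perf(\tilde A)\simeq\Perf(M_n(\tilde A))$ cannot simply be transported. Concretely, restriction of scalars along $\widetilde{M_n(A)}\hookrightarrow M_n(\tilde A)$ need not preserve perfectness for badly non-unital $A$, which is exactly why I would phrase the conclusion as an equivalence of $\Ind$-completions (where only the bookkeeping of the two support conditions is needed) rather than as an on-the-nose equivalence $\hpf(A)\simeq\hpf(M_n(A))$. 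An alternative, probably more efficient, route is to show directly that $\iota$ induces a Morita equivalence of differential graded categories $\hpfd(A)\functor\hpfd(M_n(A))$, the bimodule computations being of the same kind as those used for $\KQ$-theory in \cite{KQ}, and then to transport this to $\hpf$ via Remark \ref{homotopyCompatible}, since a Morita equivalence of differential graded categories induces one of the associated stable $\infty$-categories, which $\uloc$ inverts. Either way, the substance of the argument is the non-unital Morita theory; Morita invariance of $\uloc$ then closes it.
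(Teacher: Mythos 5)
Your overall strategy is sound, but it takes a longer and more cautious route than the paper does, because you are missing a key input. The paper simply cites Proposition~2.4 of \cite{KQ}, which already asserts that the corner embedding induces an honest equivalence $\hpfd(A)\cong\hpfd(M_n(A))$ of differential graded categories (not merely a Morita equivalence). Given that, one transports the equivalence to $\hpf(A)\simeq\hpf(M_n(A))$ via Proposition~\ref{infFun} and Remark~\ref{homotopyCompatible}, and then $\cM_\infty=\uloc\circ\hpf$ trivially sends it to an equivalence in $\mloc$; no appeal to Morita invariance of $\uloc$ is needed at all. In other words, the hard bimodule content that you correctly identify --- including the fact that $\widetilde{M_n(A)}$ is a proper subalgebra of $M_n(\tilde A)$ for $n\geqslant 2$, and the careful tracking of the support condition $X\otimes^{\mathbf L}_{\tilde A}k\simeq 0$ --- was carried out once and for all in \cite{KQ}, and the present proof just pulls that result off the shelf. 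Your first route (Morita equivalence at the level of $\Ind$-completions) and your ``alternative route'' (Morita equivalence of dg categories transported through Remark~\ref{homotopyCompatible}) are both defensible and would succeed, but they amount to re-deriving a weaker form of \cite{KQ}, Proposition~2.4 from scratch and then compensating by invoking Morita invariance of $\uloc$. What your approach buys is a self-contained argument that doesn't lean on the earlier paper; what the paper's approach buys is brevity and a sharper intermediate statement (genuine equivalence rather than Morita equivalence of $\hpf$). Note also that your sketch leaves the crucial bimodule verifications as ``I would check,'' so as written it is a plan rather than a proof; this is exactly the gap that the citation to \cite{KQ} closes.
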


\begin{proof}
 From Proposition 2.4 of \cite{KQ} we deduce that $\hpfd(A)\cong\hpfd(M_n(A))$ induced by the corner embedding $A\map M_n(A)$. Consequently, $\hpf(A)\simeq \hpf(M_n(A))$ in $\catex$ (see Proposition \ref{infFun}). The assertion follows since $\cM_\infty(-)=\uloc\circ\hpf(-)$.
\end{proof}

Let $\CAlg$ denote the category of all (possibly nonseparable) $C^*$-algebras viewed as a subcategory of $\Alg_\CC$. It follows from the Cohen--Hewitt factorization theorem that any $A\in\CAlg$ satisfies $A^2=A$. Summarizing, we have the following:

\begin{thm} \label{mloc}
 Viewing $\CAlg$ as an ordinary category (not a topological category) there is a functor $\cM_\infty:\N(\CAlg)\functor\mloc$ that satisfies: 
 
 \begin{enumerate}
  \item (matrix stability): $\cM_\infty(A)\simeq\cM_\infty(M_n(A))$ for all $A$, and
  \item (localization / excision): any short exact sequence $0\map A\map B\map C\map 0$ produces the following cofiber sequence in $\mloc$ $$\cM_\infty(A)\map\cM_\infty(B)\map\cM_\infty(C).$$
 \end{enumerate}

\end{thm}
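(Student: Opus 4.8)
The plan is to assemble Theorem \ref{mloc} from the pieces already established, treating it essentially as a bookkeeping statement. First I would note that by the Cohen--Hewitt factorization theorem every $C^*$-algebra $A$ satisfies $A^2 = A$ (this is the hypothesis needed in all the preceding lemmas, and it is recorded in Example \ref{homEpi}), and that every short exact sequence $0\map A\map B\map C\map 0$ of $C^*$-algebras has $A$ a flat $B$-module by universal flatness (again Example \ref{homEpi}). So the technical hypotheses $A^2=A$ and flatness that appear in Lemma \ref{homfib}, Lemma \ref{Mexc}, and the surrounding results are automatically met for $C^*$-algebras. The subtlety worth flagging is that $B$ and $C$ need not be unital, whereas the lemmas are stated for unital $B,C$; I would dispose of this by unitization, embedding an arbitrary short exact sequence of $C^*$-algebras into one of unital $C^*$-algebras via $0\map A\map \tilde{B}\map\tilde{C}\map 0$ (or the relevant pullback square) and using that $\cM_\infty$ of a unitization splits off $\cM_\infty$ of the algebra together with $\cM_\infty(k)$, so the cofiber sequence descends. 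This is the step I expect to require the most care, though it is a standard maneuver in nonunital $K$-theory.

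Concretely, the functor itself is $\cM_\infty := \uloc\circ\hpf$ restricted along $\N(\CAlg)\hookrightarrow\N(\Alg_\CC)$; Proposition \ref{infFun} already produces $\hpf:\N(\Alg_k)\functor\catex$ and $\uloc:\catex\functor\mloc$ is the universal localizing invariant of \cite{BluGepTab}, so the functoriality assertion is immediate. For matrix stability (1), I would cite the Proposition immediately preceding the theorem, which gives $\cM_\infty(A)\simeq\cM_\infty(M_n(A))$ via the corner embedding, building on $\hpfd(A)\cong\hpfd(M_n(A))$ from Proposition 2.4 of \cite{KQ} and the compatibility $\h\hpf(A)\cong\H^0(\hpfd(A))$ of Remark \ref{homotopyCompatible}. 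For excision (2) over unital $B,C$, I would invoke Lemma \ref{Mexc} directly: the hypotheses $A^2=A$ and flatness are verified above, Lemma \ref{homfib} then gives that $\hpf(A)\map\hpf(B)\map\hpf(C)$ is exact in $\catex$, and since $\uloc$ is a localizing invariant it sends exact sequences to cofiber sequences in the stable $\infty$-category $\mloc$.

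For the nonunital refinement of (2), the argument runs as follows. Given $0\map A\map B\map C\map 0$ with $B,C$ possibly nonunital, form the commutative diagram of short exact sequences relating it to $0\map A\map\tilde{B}\map\tilde{C}\map 0$ and $0\map 0\map k\overset{=}{\map}k\map 0$; all three rows have unital middle and right terms and satisfy $A^2=A$ (and the ideal is flat), so Lemma \ref{Mexc} applies to each. The induced map of cofiber sequences in $\mloc$, together with the fact that $\cM_\infty(\tilde{B})\simeq\cM_\infty(B)\oplus\cM_\infty(k)$ and $\cM_\infty(\tilde{C})\simeq\cM_\infty(C)\oplus\cM_\infty(k)$ compatibly (which follows from the split exactness of $0\map B\map\tilde{B}\map k\map 0$ and its naturality), lets one cancel the $\cM_\infty(k)$ summands and conclude that $\cM_\infty(A)\map\cM_\infty(B)\map\cM_\infty(C)$ is a cofiber sequence. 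The main obstacle, as noted, is getting these splittings to be natural enough to carry out the cancellation cleanly; everything else is a direct appeal to the cited results.
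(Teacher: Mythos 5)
Your proposal matches the paper's own argument: restrict $\cM_\infty = \uloc\circ\hpf$ along $\N(\CAlg)\hookrightarrow\N(\Alg_\CC)$, verify the hypotheses $A^2=A$ and flatness via Cohen--Hewitt and universal flatness as in Example~\ref{homEpi}, pull matrix stability from the preceding proposition, get unital excision from Lemma~\ref{Mexc}, and reduce the nonunital case to the unitized sequence $0\to A\to\tilde B\to\tilde C\to 0$ together with $0\to 0\to\CC\to\CC\to 0$ and the split columns $0\to B\to\tilde B\to\CC\to 0$, $0\to C\to\tilde C\to\CC\to 0$, cancelling the $\cM_\infty(\CC)$ terms by a diagram chase. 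This is exactly the paper's proof.

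One slip in your write-up worth correcting: you assert that "all three rows have unital middle and right terms \ldots so Lemma~\ref{Mexc} applies to each." The top row $A\to B\to C$ is precisely the one with possibly nonunital $B,C$, so Lemma~\ref{Mexc} does not apply to it --- applying it there would be circular, since that row's cofiber property is what you are trying to establish. What you actually use (and what the paper uses) is Lemma~\ref{Mexc} only on the two auxiliary rows and on the two split columns $0\to B\to\tilde B\to\CC\to 0$ and $0\to C\to\tilde C\to\CC\to 0$ (whose middle and right terms are unital), after which the cancellation you describe forces the top row to be a cofiber sequence. Your concluding plan is right; the parenthetical justification in that one sentence is not.
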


\begin{proof}
 Only $(2)$ needs a proof because it has been strengthened (note that $B$ and $C$ are no longer assumed to be unital). For any short exact sequence $0\map A\map B\overset{g}{\map} C\map 0$ in $\CAlg$ we need to show that $\cM_\infty(A)$ is the fiber of the induced map $\cM_\infty(B)\overset{g}{\map}\cM_\infty(C)$. We can form another short exact sequence $0\map A\map \tilde{B}\overset{\tilde{g}}{\map} \tilde{C}\map 0$ with $\tilde{B},\tilde{C}$ unital. Now there is a diagram in $\mloc$ (see Example \ref{homEpi} and Lemma \ref{Mexc})
 
 \beqn
 \xymatrix{
 \mathrm{fib}(g)\ar[r]\ar[d] & \cM_\infty(B)\ar[r]^g\ar[d] & \cM_\infty(C)\ar[d] \\
  \cM_\infty(A)\ar[r]\ar[d] & \cM_\infty(\tilde{B})\ar[r]^{\tilde{g}}\ar[d] & \cM_\infty(\tilde{C})\ar[d] \\
  0\simeq\cM_\infty(0)\ar[r] & \cM_\infty(\CC)\ar[r]^\cong\ar@/_1pc/[u] & \cM_\infty(\CC).\ar@/_1pc/[u]
 }
 \eeqn The bottom two rows and the two columns on the right are cofiber sequences. Since $0\map B\map\tilde{B}\map \CC\map 0$ and $0\map C\map\tilde{C}\map \CC\map 0$ admit splittings $\CC\map\tilde{B}$ and $\CC\map\tilde{C}$ in $\CAlg$ respectively, the indicated splittings exist in the above diagram, i.e., the two columns on the right are split cofiber sequences. The assertion now follows by a diagram chase.
\end{proof}

\subsection{$C^*$-algebras and nonconnective $\KQ$-theory}
We begin with an alternative description of the $\infty$-category of pointed noncommutative spaces $\iNS$. Let $\Csep^\delta$ temporarily denote the category of separable $C^*$-algebras, where the morphism spaces carry the discrete topology; moreover, let $\Csep$ denote the topological category of separable $C^*$-algebras, where the morphism spaces carry the point-norm topology. We denote the topological nerves of these categories by $\iCsep^\delta$ and $\iCsep$ respectively. There is a canonical functor $\iCsep^\delta\overset{\gamma}{\functor}\iCsep$ that is identity on objects. Let $H=\{f\,|\, \text{$f$ homotopy equivalence in $\Csep^\op$}\}$ denote a collection of maps in $(\iCsep^\delta)^\op$. Setting $\cC:=(\iCsep^\delta)^\op$ we find that $\gamma^\op:\cC\functor\iCsep^\op$ sends the maps in $H$ to equivalences. Thus it follows that $\gamma^\op$ factors as $$\cC\overset{}{\functor}\cC[H^{-1}]\overset{\theta}{\functor}\iCsep^\op.$$ Here the $\infty$-category $\cC[H^{-1}]$ is constructed as a fibrant replacement of the marked simplicial set $(\cC,H)$ (see Section 5.2.7 of \cite{LurToposBook}, also \cite{HinLoc}). The functor $\theta$ extends to a continuous functor $\Ind_\omega(\cC[H^{-1}])\overset{\theta}{\functor}\Ind_\omega(\iCsep^\op)=\iNS$. 

\begin{lem} \label{NS}
 The functor $\Ind_\omega(\cC[H^{-1}])\overset{\theta}{\functor}\iNS$ is an equivalence of $\infty$-categories.
\end{lem}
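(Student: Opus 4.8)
The plan is to deduce the equivalence from a general principle: a localization of $\infty$-categories is preserved by $\Ind_\omega$, at least when the relevant objects remain compact. First I would observe that both $\iCsep^\op$ and $\cC[H^{-1}]$ have finite colimits, and that $\gamma^\op$ and hence $\theta$ preserve them; in fact the whole point of the construction in \cite{MyNSH} is that $\iNS=\Ind_\omega(\iCsep^\op)$ is the free cocompletion of $\iCsep^\op$ under filtered colimits. Thus the real content is to show that $\theta:\cC[H^{-1}]\functor\iCsep^\op$ is already an equivalence of $\infty$-categories — once that is known, applying $\Ind_\omega(-)$ to an equivalence yields an equivalence, and the identification $\Ind_\omega(\iCsep^\op)=\iNS$ finishes the argument.

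So the key step is: $\cC[H^{-1}]\simeq\iCsep^\op$, equivalently (taking opposites) the localization of $(\iCsep^\delta)$ at the homotopy equivalences is $\iCsep$. I would argue this by comparing mapping spaces. The $\infty$-category $\iCsep$ is the topological (= homotopy coherent) nerve of the fibrant simplicial category $\Csep$ whose mapping spaces are the spaces $\Map_{\Csep}(A,B)$ of $*$-homomorphisms with the point-norm topology; since $\Csep$ is a topological category in which these mapping spaces are the realizations of the singular complexes, one has $\Map_{\iCsep}(A,B)\simeq |\mathrm{Sing}\,\Map_{\Csep}(A,B)|$. On the other hand $\cC[H^{-1}]$ is obtained by inverting the homotopy equivalences in the \emph{discrete} category $\iCsep^\delta$. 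I would invoke the universal property of the Dwyer–Kan / hammock localization (or Lurie's marked-simplicial-set model, Section 5.2.7 of \cite{LurToposBook}): mapping spaces in $\cC[H^{-1}]$ are computed by the hammock localization $L^H\Csep^\delta$ at the homotopy equivalences, and because every $*$-homomorphism $A\to B$ sits in a (functorial) cylinder built from the path algebra $C([0,1],B)$, the simplicial set $\Map_{\cC[H^{-1}]}(B,A)$ is weakly equivalent to the nerve of the category of zig-zags, which — using that $C([0,1],B)\to B$ is a homotopy equivalence and homotopies of $*$-homomorphisms are parametrized by $[0,1]$ — is exactly $\mathrm{Sing}\,\Map_{\Csep}(A,B)$. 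Hence $\theta$ induces equivalences on all mapping spaces; it is bijective on objects by construction, so it is an equivalence of $\infty$-categories.

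Then I would assemble the pieces: $\theta$ being an equivalence, $\Ind_\omega(\theta):\Ind_\omega(\cC[H^{-1}])\to\Ind_\omega(\iCsep^\op)$ is an equivalence (the functor $\Ind_\omega$ sends equivalences of small $\infty$-categories to equivalences, since it is a functor of $\infty$-categories), and the codomain is $\iNS$ by definition. This gives the claimed equivalence $\Ind_\omega(\cC[H^{-1}])\overset{\theta}{\functor}\iNS$.

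The main obstacle I anticipate is the mapping-space computation in the middle step: one must check carefully that inverting the \emph{point-norm continuous homotopies} (parametrized by $[0,1]$) at the level of the discrete category reconstructs the correct topology on $\Map_{\Csep}(A,B)$, i.e. that the simplicial localization of $\Csep^\delta$ at $H$ recovers the topological enrichment of $\Csep$ rather than some coarser or finer homotopy type. This is a model-independent statement of the form "inverting cylinder-homotopies in a category with a good cylinder functor computes the topological mapping spaces", and making it precise requires identifying $C([0,1],-)$ as a cylinder (or cocylinder) object with the expected homotopical behaviour and then quoting a comparison theorem between the hammock localization and the nerve of the category of cylinders; one can also lean on the fact, already used in \cite{MyNSH}, that the point-norm topology on $\Map_{\Csep}(A,B)$ is itself presented by such homotopies, so that $\mathrm{Sing}$ of it is built from the same combinatorial data that the localization produces.
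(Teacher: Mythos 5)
Your overall structure matches the paper's proof exactly: reduce to showing $\cC[H^{-1}]\simeq\iCsep^\op$ (equivalently, that the $\infty$-categorical localization of $\iCsep^\delta$ at the homotopy equivalences recovers the topological nerve $\iCsep$), and then apply the fact that $\Ind_\omega$ takes equivalences of small $\infty$-categories to equivalences — the paper cites Proposition 5.3.5.11 of \cite{LurToposBook} for this second step, and your reasoning there is correct.

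The difference, and the gap, is in the middle step. The paper does not prove $\cC[H^{-1}]\simeq\iCsep^\op$; it cites Proposition 3.17 of \cite{BarJoaMe} for it. You attempt to establish it from scratch via hammock localization, and your sketch does not close the argument — you acknowledge this yourself, but the acknowledgement does not make the gap go away. The difficulty is exactly where you flag it: one must verify that the simplicial localization of the discrete category $\Csep^\delta$ at the homotopy equivalences produces mapping spaces weakly equivalent to $\mathrm{Sing}\,\Map_{\Csep}(A,B)$ in the point-norm topology. This is a genuine theorem, not a tautology. The Dwyer--Kan comparison results identifying hammock-localization mapping spaces with function complexes of a simplicial enrichment are proved for simplicial model categories, or at least for categories of (co)fibrant objects with suitable structure; $\Csep$ is not a model category, so one must first set up the appropriate cocylinder/cofibration-category framework (using $C(\Delta^n,B)$) on $C^*$-algebras and then invoke the right version of the comparison theorem. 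Your claim that the nerve of the zig-zag category ``is exactly $\mathrm{Sing}\,\Map_{\Csep}(A,B)$'' overstates what can be true — at best one gets a weak equivalence, and establishing that weak equivalence is precisely the content of the cited Proposition 3.17 of \cite{BarJoaMe}. Without that reference, or a worked-out substitute for it along the lines you sketch, the key step remains unproven.
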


\begin{proof}
 The functor $\cC[H^{-1}]\overset{\theta}{\functor}\iCsep^\op$ is an equivalence (see Proposition 3.17 of \cite{BarJoaMe}), from which the assertion follows (see Proposition 5.3.5.11 of \cite{LurToposBook}).
\end{proof}

Let $\cM_\infty^\cpt$ denote the composite functor $\iCsep\overset{L_\cpt}{\functor}\iCsep\overset{\cM_\infty}{\functor}\mloc$, where the localization $L_\cpt$ is described in Example \ref{ConE}. Since $\mloc$ is a stable $\infty$-category its homotopy category $\h\mloc$ is triangulated and we denote the triangulated category valued functor $\h\cM_\infty^\cpt:\h\iCsep\functor\h\mloc$ simply by $\cM^\cpt$. In \cite{MyColoc} we constructed a stable presentable $\infty$-category $\pNSp[\cpt^{-1}]$, whose opposite $\infty$-category is by definition the bivariant $\K$-theory $\infty$-category $\ikk$ for arbitrary pointed noncommutative spaces, i.e., $\ikk:=\pNSp[\cpt^{-1}]^\op$. Moreover, it is shown in Theorem 2.4 of \cite{MyColoc} that there is a fully faithful exact functor $\KKcat\hookrightarrow\h\ikk$.

\begin{thm} \label{facKK}
 The functor $(\cM_\infty^\cpt)^\op:\iCsep^\op\functor\mloc^\op$ induces the following two functors: \begin{enumerate}
                                                                                \item $\iNS\functor\mloc^\op$ that is continuous, and
                                                                                \item $\ikk\functor\mloc$ that is exact.
                                                                               \end{enumerate}
\end{thm}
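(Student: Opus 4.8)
The plan is to produce both functors by repeatedly invoking universal properties of the localizations involved, so that very little hands-on verification is needed. For part (1), recall that $\iNS = \Ind_\omega(\iCsep^\op)$ is freely generated under filtered colimits by $\iCsep^\op$, so a continuous functor $\iNS \functor \mloc^\op$ is the same datum as a functor $\iCsep^\op \functor \mloc^\op$ (see Proposition 5.3.5.10 of \cite{LurToposBook}). Thus the composite $(\cM_\infty^\cpt)^\op : \iCsep^\op \functor \mloc^\op$ extends uniquely (up to contractible choice) to a continuous functor $\iNS \functor \mloc^\op$; this is essentially formal and uses nothing beyond the construction of $\iNS$.

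For part (2) I would use the alternative description of $\iNS$ from Lemma \ref{NS}, namely $\iNS \simeq \Ind_\omega(\cC[H^{-1}])$ with $\cC = (\iCsep^\delta)^\op$. Precomposing the continuous functor from (1) with $\Ind_\omega(\cC[H^{-1}]) \simeq \iNS$ we obtain a continuous functor $\Ind_\omega(\cC[H^{-1}]) \functor \mloc^\op$; at the level of small categories this is the functor $\cC[H^{-1}] \functor \mloc^\op$ induced by $A \mapsto \uloc(\hpf(L_\cpt A))$, using that $\cM_\infty^\cpt = \uloc \circ \hpf \circ L_\cpt$ factors through homotopy equivalences of $C^*$-algebras (homotopy invariance of $\uloc$ after $\cpt$-stabilization — matrix stability is Theorem \ref{mloc}(1) and $C^*$-stabilization forces homotopy invariance). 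Now recall from \cite{MyColoc} that $\ikk^\op = \pNSp[\cpt^{-1}]$ is obtained from $\iCsep^\op$ (or from $\cC[H^{-1}]$) by first stabilizing and then performing the accessible localization that forces both the excision maps $\C(f) \map \ker(f)$ and the matrix-stability maps $M_2(A) \map A$ to become equivalences. So it suffices to check: (i) the functor to $\mloc^\op$ lands in stable objects after stabilization, i.e. it is exact once extended to the stabilization of $\iNS$ — this follows because $\mloc$ is already stable (Theorem 8.7 of \cite{BluGepTab}) so the extension $\Sp(\iNS) \functor \mloc^\op$ factors through the identity $\Sp(\mloc^\op) \simeq \mloc^\op$; and (ii) this exact functor sends the localizing maps to equivalences. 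For the excision maps this is exactly Theorem \ref{mloc}(2): a short exact sequence $0 \map \ker(f) \map A \map B \map 0$ gives a cofiber sequence $\cM_\infty^\cpt(\ker f) \map \cM_\infty^\cpt(A) \map \cM_\infty^\cpt(B)$ in $\mloc$, which translates into the statement that $\kc$ of $\C(f) \map \ker(f)$ is an equivalence in $\mloc^\op$ (the mapping cone $\C(f)$ maps to the cofiber, which here coincides with $\ker(f)$ up to the relevant suspension shift — one must match Thom's cone construction with the homotopy-cofiber). For the matrix-stability maps this is Theorem \ref{mloc}(1). Hence by the universal property of the accessible localization defining $\ikk^\op = \pNSp[\cpt^{-1}]$, the exact functor $\Sp(\iNS) \functor \mloc^\op$ descends to an exact functor $\ikk^\op \functor \mloc^\op$, and taking opposites gives the desired exact functor $\ikk \functor \mloc$.

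The main obstacle, as usual in this circle of ideas, is bookkeeping rather than mathematics: one has to be careful that the set of maps inverted in passing from $\iNS$ (or $\Sp(\iNS)$) to $\ikk^\op$ is precisely generated by the images of the excision maps $T_0$ and the corner-inclusion maps $X$, and that $\cM_\infty^\cpt$ kills exactly these — the excision assertion of Theorem \ref{mloc} is stated for honest short exact sequences of $C^*$-algebras, so one should verify that inverting those mapping-cone maps $\C(f) \map \ker(f)$ in the stable setting is equivalent to imposing the cofiber-sequence condition used in \cite{BluGepTab}, which is the content of matching Thom's triangulated excision with Blumberg--Gepner--Tabuada's localization condition (Proposition 5.6 and Proposition 5.15 of \cite{BluGepTab}). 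Once that compatibility is in place, both functors drop out of universal properties with no further computation. A secondary subtlety is checking continuity/accessibility of the extension $\Sp(\iNS) \functor \mloc^\op$: since $\mloc$ is presentable and the functor is built from colimit-preserving pieces, this follows from the adjoint functor theorem together with the fact that $\iCsep^\op$ consists of compact objects, but it is worth stating explicitly.
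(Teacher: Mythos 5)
Your outline diverges from the paper's argument in two places, and both divergences hide genuine content.

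\textbf{Part (1): where the functor actually lives.} You treat $(\cM_\infty^\cpt)^\op$ as a functor already defined on the topological nerve $\iCsep^\op$, so that the extension to $\iNS=\Ind_\omega(\iCsep^\op)$ is purely formal. But $\cM_\infty^\cpt=\uloc\circ\hpf\circ L_\cpt$ is built out of purely algebraic input (chain complexes of modules over unitizations); it is a priori a functor on the \emph{ordinary} category of separable $C^*$-algebras, i.e.\ on $\cC=(\iCsep^\delta)^\op$, not on $\iCsep^\op$. The nontrivial step in the paper's proof of (1) is precisely promoting it to a functor on $\iCsep^\op$: one shows (using Theorem \ref{mloc}) that $\cM_\infty^\cpt$ is split exact and matrix stable, hence $C^*$-stable by Proposition 3.31 of \cite{CunMeyRos}, and then invokes Higson's theorem (Theorem 3.2.2 of \cite{Hig2}) to conclude homotopy invariance, so that the functor factors through $\cC[H^{-1}]\simeq\iCsep^\op$. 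Your parenthetical ``$C^*$-stabilization forces homotopy invariance'' is too strong as stated --- $C^*$-stability alone does not give homotopy invariance; one needs split exactness as well, and it is exactly the combination via Higson's theorem that does the work. You also only mention this in (2), whereas it is the substance of (1).

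\textbf{Part (2): restriction vs.\ descent.} You describe $\ikk^\op=\pNSp[\cpt^{-1}]$ as ``the accessible localization that forces both the excision maps $\C(f)\map\ker(f)$ and the matrix-stability maps $M_2(A)\map A$ to become equivalences'' and then try to apply the universal property of that localization to descend the exact functor $\Sp(\iNS)\map\mloc^\op$. This does not match the actual construction: $\pNSp$ is an accessible localization of $\Sp(\iNS)$, but $\pNSp[\cpt^{-1}]$ is a \emph{smashing colocalization} of $\pNSp$, not a further localization. The paper's argument is considerably simpler and does not involve any descent: both an accessible localization and a smashing colocalization yield fully faithful (exact) inclusions $\pNSp[\cpt^{-1}]\hookrightarrow\pNSp\hookrightarrow\Sp(\iNS)$, and one simply \emph{restricts} the exact functor $\Sp(\iNS)\map\mloc^\op$ along this chain and takes opposites. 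Your checks that the localizing maps are sent to equivalences are therefore not needed, and the universal property you invoke is not available.

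\textbf{A third gap.} You also assert that the extension $\Sp(\iNS)\map\mloc^\op$ ``is exact once extended... because $\mloc$ is already stable.'' That is not an argument. To obtain an exact functor out of $\Sp^\Sigma(\cC[H^{-1}])$, one has to verify that $\Theta:\cC[H^{-1}]\map\mloc^\op$ preserves finite colimits, which the paper does by checking preservation of zero objects and pullbacks via (the dual of) Corollary 4.4.2.5 of \cite{LurToposBook}. The target being stable does not dispense with this verification.

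In short: the extension-along-$\Ind$ and stabilization steps you take as formal are indeed formal, but the two real inputs --- homotopy invariance via Higson's theorem, and the identification of $\ikk^\op$ as a fully faithful \emph{subcategory} of $\Sp(\iNS)$ through which one restricts rather than descends --- are both missing or misdescribed in your proposal.
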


\begin{proof}
For (1) observe that the functor $\cM_\infty^\cpt$ satisfies localization / excision whence the functor $\cM^\cpt$ is split exact. Since $\cM$ is $M_n$-stable, $\cM^\cpt$ is also $C^*$-stable (see Proposition 3.31 of \cite{CunMeyRos}). By Higson's Theorem (see Theorem 3.2.2 of \cite{Hig2}) the functor $(\cM_\infty^\cpt)^\op:\cC=(\iCsep^\delta)^\op\functor\mloc^\op$ sends the maps in $H$ to equivalences and hence it factors as $\cC\functor\cC[H^{-1}]\overset{\Theta}{\functor}\mloc^\op.$ Since $\mloc^\op$ admits all (filtered) colimits, the functor $\cC[H^{-1}]\overset{\Theta}{\functor}\mloc^\op$ can be extended to a continuous functor $\Ind_\omega(\cC[H^{-1}])\overset{\Theta}{\functor}\mloc^\op$ and by Lemma \ref{NS} we may identify $\Ind_\omega(\cC[H^{-1}])\simeq\iNS$.

For (2) observe that the functor $\cC[H^{-1}]\overset{\Theta}{\functor}\mloc^\op$ preserves finite colimits. Indeed, thanks to (the dual of) Corollary 4.4.2.5 of \cite{LurToposBook} one simply needs to verify that the functor $\Theta^\op = \cM_\infty^\cpt$ preserves zero objects and pullbacks. Let $\Sp^\Sigma(\cC[H^{-1}])$ denote the filtered colimit of $\cC[H^{-1}]\overset{\Sigma}{\map}\cC[H^{-1}]\overset{\Sigma}{\map}\cC[H^{-1}]\overset{\Sigma}{\map}\cdots$ in $\catinf$. Since $\mloc^\op$ is stable $\Theta$ induces an exact functor $\Sp^\Sigma(\cC[H^{-1}])\overset{\Theta}{\map}\mloc^\op$ that, owing to the cocompleteness of $\mloc^\op$, can be extended to a functor $\Ind_\omega(\Sp^\Sigma(\cC[H^{-1}]))\overset{\Theta}{\map}\mloc^\op$. We may identify $\Ind_\omega(\Sp^\Sigma(\cC[H^{-1}]))\simeq\Sp(\Ind_\omega(\cC[H^{-1}]))\simeq\Sp(\iNS)$ furnishing $\Sp(\iNS)\overset{\Theta}{\functor}\mloc^\op$. By construction in \cite{MyColoc} $\pNSp$ is an accessible localization of $\Sp(\iNS)$ and $\pNSp[\cpt^{-1}]$ is a smashing colocalization $\pNSp$ whence there are fully faithful exact functors $\pNSp[\cpt^{-1}]\hookrightarrow \pNSp\hookrightarrow \Sp(\iNS)$. Its composition with $\Sp(\iNS)\overset{\Theta}{\functor}\mloc^\op$ is also exact, whose opposite produces the desired exact functor $\ikk\functor\mloc$.
\end{proof}

\begin{rem} \label{BivHomColoc}
 It follows from Theorem \ref{facKK} that the functor $\cM^\cpt$ is a bivariant homology theory. Using results of \cite{MyComparison} one can also show that $\cM^{\Oinf}(-)=\cM(-\prot\Oinf)$ is a bivariant homology theory. Thus $\cM^\cpt,\cM^{\Oinf}:\Csep\functor\h\mloc$ are noncommutative motive valued bivariant homology theories on the category of separable $C^*$-algebras.
\end{rem}

\begin{defn}
 We define the nonconnective $\KQ$-theory or $\KQ^\nc$-theory groups as $$\KQ^\nc_i(A):=\pi_i(\bKn(\hpf(A))) \text{ for all $A\in\Alg_k$ and $i\in\ZZ$.}$$
\end{defn}

\begin{thm} \label{KQ}
 Let a $C^*$ algebra $B$ be of the form $A\prot C$, where $C=\cpt$ or any properly infinite $C^*$-algebra. Then there is a natural isomorphism $\KQ^\nc_i(B)\cong\K^\nc_i(B)$ for all $i\in\ZZ$, where $\K^\nc_i(B)$ denotes the $i$-th nonconnective algebraic $\K$-theory group of $B$.
\end{thm}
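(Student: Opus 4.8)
The plan is to reduce the statement to a comparison between nonconnective $\KQ$-theory, which is built from the stable $\infty$-category $\hpf(B)$, and ordinary nonconnective algebraic $\K$-theory of $B$ as a $\CC$-algebra, and then to invoke the fact that for $C^*$-algebras of the form $A\prot\cpt$ or $A\prot C$ with $C$ properly infinite, the Waldhausen category $\hpfw(B)$ captures exactly the perfect $\tilde B$-modules that are ``supported on $B$''. First I would recall from Lemma \ref{connWald} and the preceding discussion that $\KQ^\nc$-theory can equivalently be computed as $\pi_*\bKn(\hpf(B))$, so that the problem becomes: show that the exact functor $\hpf(B)\hookrightarrow\Perf(\tilde B)$ (Lemma \ref{Waldstr}, passed through $\N_\dg$) induces an equivalence on nonconnective $\K$-theory, and that $\bKn(\Perf(\tilde B))$ together with the localization/excision sequence coming from $0\map B\map\tilde B\map\CC\map 0$ recovers $\K^\nc(B)$ in the usual sense. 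The second half is essentially formal: by Lemma \ref{homfib} (using that every $C^*$-algebra is universally flat and satisfies $B^2=B$, as in Example \ref{homEpi}), the sequence $\hpf(B)\map\hpf(\tilde B)\map\hpf(\CC)$ is exact in $\catex$, hence by Theorem 9.8 of \cite{BluGepTab} gives a cofiber sequence $\bKn(\hpf(B))\map\bKn(\hpf(\tilde B))\map\bKn(\hpf(\CC))$; but $\tilde B$ and $\CC$ are unital, so $\hpf$ of them agrees with $\Perf$ of them and $\bKn(\hpf(\tilde B))=\K^\nc(\tilde B)$, $\bKn(\hpf(\CC))=\K^\nc(\CC)$. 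Comparing with the split cofiber sequence $\K^\nc(B)\map\K^\nc(\tilde B)\map\K^\nc(\CC)$ defining nonconnective algebraic $\K$-theory of the nonunital algebra $B$ (which is split because $0\map B\map\tilde B\map\CC\map 0$ splits), a diagram chase on fibers identifies $\KQ^\nc_*(B)\cong\K^\nc_*(B)$ naturally.

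The actual content, and the place where the hypothesis on $B$ is used, is in verifying that $\hpf(B)\hookrightarrow\Perf(\tilde B)$ induces an equivalence on $\K$-theory spectra — equivalently, that the condition ``$X\otimes_{\tilde B}B\map X$ is a homotopy equivalence'' cuts out a subcategory whose $\K$-theory already sees all of $\K^\nc(B)$. For $C=\cpt$, one uses that $B=A\prot\cpt$ is stable, so $\tilde B$ is, up to the usual matrix-stability and excision manipulations, Morita-equivalent in the relevant sense to make the idempotent $e_{11}$ generate; more precisely, I would show that $\hpf(B)$ is the kernel of $\Perf(\tilde B)\map\Perf(\CC)$ up to idempotent completion, and that this kernel is exactly the localizing subcategory generated by $B$ as a $\tilde B$-module, which by the universal flatness and $B^2=B$ assumptions coincides with the category whose $\K$-theory is $\K^\nc(B)$ in the Blumberg–Gepner–Tabuada sense. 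For $C$ properly infinite, the key extra input is an Eilenberg-swindle / infinite-corner argument (as in the proofs of Theorems \ref{OColoc} and \ref{O2}, via Proposition 3.16 of \cite{CunMeyRos}): properly infinite $C^*$-algebras admit the system of isometries needed to make the corner embedding $B\map M_n(B)$ and its ``stabilization'' behave well, so that again the relevant perfect complexes are detected.

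The main obstacle I expect is precisely this identification of $\hpf(B)$ with the ``supported-on-$B$'' part of $\Perf(\tilde B)$ and the resulting $\K$-theory statement: one must be careful that $\hpf$ as defined involves a homotopy-finiteness condition (homotopy equivalence to a strictly perfect complex) in addition to the support condition, and that nonconnective $\K$-theory — unlike connective — is insensitive to idempotent completion (Theorem 9.8 of \cite{BluGepTab} and the Schlichting delooping), so the ``after idempotent completion'' clauses in Lemma \ref{homfib} and in Proposition 5.15 of \cite{BluGepTab} do not cause a discrepancy at the nonconnective level. Once that bookkeeping is in place, the naturality in $B$ is automatic from the functoriality established in Proposition \ref{infFun} and the naturality of the localization sequences, and the theorem follows. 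One should also remark that this simultaneously reproves, for these $B$, the comparison with topological $\K$-theory via \cite{CorPhi,MyComparison} and hence, combined with Theorem \ref{OColoc}, the package summarized in Remark \ref{identical}.
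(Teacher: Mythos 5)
Your proposed route is genuinely different from the one the paper actually takes for the stable case $C=\cpt$. You propose to apply Lemma \ref{homfib} to the unitization sequence $0\map B\map\tilde B\map\CC\map 0$ (noting that the hypotheses $B^2=B$ and $B$ flat over $\tilde B$ hold for every $C^*$-algebra by Cohen--Hewitt and Wodzicki's universal flatness), to identify $\hpf(\tilde B)\simeq\Perf(\tilde B)$ and $\hpf(\CC)\simeq\Perf(\CC)$ since both are unital, and then to feed the resulting exact sequence in $\catex$ into Theorem 9.8 of \cite{BluGepTab}; the split comparison with the defining fiber sequence for $\K^\nc$ of a nonunital ring then closes the argument. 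The paper does something quite different: it takes the Waldhausen-exact inclusion $\hpfw(A_\cpt)\hookrightarrow\Perf(\tilde{A_\cpt})$ from Lemma \ref{Waldstr}, forms the fiber $F(\phi)$ of $\Perf(\tilde{A_\cpt})\map\Perf(\CC)$, invokes Theorem 3.7 of \cite{KQ} together with Lemma \ref{connWald} to establish that the canonical map $\hpf(A_\cpt)\map F(\phi)$ is a \emph{connective} $\K$-equivalence, and only then deloops via Karoubi suspensions; the nonconnective statement is thus inherited rather than read off from a localization sequence directly. One thing worth flagging: your argument, as written, does not actually use the hypothesis that $B$ is of the form $A\prot\cpt$ or $A\prot C$ with $C$ properly infinite at any point in the formal chain of implications --- which should give you pause, since the paper's theorem is explicitly restricted and proved for these classes separately. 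Either you have found a cleaner, more general argument, or (more likely) the reduction to Lemma \ref{homfib} is hiding a difficulty that the paper addresses by going through \cite[Thm.~3.7]{KQ}; you should try to pin down which is the case rather than assume the first.

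Two smaller inaccuracies. First, you write that one must show ``the exact functor $\hpf(B)\hookrightarrow\Perf(\tilde B)$ induces an equivalence on nonconnective $\K$-theory''; that is not what is wanted or what happens (that inclusion is far from a $\K$-equivalence); the paper's comparison is between $\hpf(A_\cpt)$ and the \emph{fiber} $F(\phi)$, not $\Perf(\tilde{A_\cpt})$ itself, and your later sentences show you understand this, but the statement as phrased is wrong. Second, the claim that your argument ``simultaneously reproves, for these $B$, the comparison with topological $\K$-theory via \cite{CorPhi,MyComparison}'' overreaches: nothing in a $\KQ^\nc\simeq\K^\nc$ comparison touches topological $\K$-theory; the identification with $\K^{\mathrm{top}}$ for stable and $\Oinf$-stable $C^*$-algebras is an independent input from \cite{SusWod2,CorPhi,MyComparison}, which Remark \ref{identical} explicitly relies on. In the properly infinite case your plan (Eilenberg swindle via isometries) is in the right spirit; the paper's implementation uses the factorization $C\map C\prot\cpt\map M_2(C)$ from Proposition 2.2 of \cite{CorPhi} and matrix stability of both $\KQ^\nc$ and $\K^\nc$ to \emph{reduce} to the already-proved stable case by a diagram chase, so you would do well to make the reduction explicit rather than leaving it at the level of ``infinite corners behave well.''
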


\begin{proof}
 Let us first address the case where $C=\cpt$ and to this end we set $A_\cpt = A\prot\cpt$. From Lemma \ref{Waldstr} we have a canonical Waldhausen exact functor $\hpfw(A_\cpt)\map\Perf(\tilde{A_\cpt})$. The composite $\hpfw(A_\cpt)\map\Perf(\tilde{A_\cpt})\overset{\phi}{\map}\Perf(\CC)$ is trivial, where $\phi=-\otimes^{\mathbb{L}}_{\tilde{A_\cpt}} \CC$. It follows that there is a canonical map of stable $\infty$-categories $\hpf(A_\cpt)\map F(\phi)$, where $F(\phi)$ is the fiber of the map $\N(\cM(\Perf(\tilde{A_\cpt}))^\mathrm{cf})\overset{\phi}{\map}\N(\cM(\Perf(\CC))^\mathrm{cf})$ (see Lemma 7.11 of \cite{BluGepTab} for the construction of $\cM(\Perf(-))$, which is a Waldhausen subcategory of a simplicial model category. Using Lemma \ref{connWald} and Theorem 3.7 of \cite{KQ} we deduce that the map $\hpf(A_\cpt)\map F(\phi)$ is a connective $\K$-theory isomorphism, i.e., there is an equivalence  $\bKc(\hpf(A_\cpt))\overset{\sim}{\map}\bKc(F(\phi))$. The connective $\K$-theory spectrum of $F(\phi)$ can be identified with $\bKc(A_\cpt)$, i.e., the connective algebraic $\K$-theory spectrum of $A_\cpt$ due to excision \cite{SusWod2}. Note that the nonconnective algebraic $\K$-theory spectrum of a stable $\infty$-category is defined in such a manner (see Section 9 of \cite{BluGepTab}) so that when applied to a $C^*$-algebra $B$ it produces the expected result, viz., $$\bKn(\hpf(B)) \simeq\colim_n\, \Omega^n\bKc(\Sigma_\kappa^{(n)}\hpf(B))\simeq \colim_n\, \Omega^n \bKc (\Sigma^n B),$$ where $\Sigma^n B$ denotes the $n$-th Karoubi delooping of $B$ (see, for instance, \cite{SchSedano,friendlyMatch}). Using localization the nonconnective $\K$-theory spectrum of $F(\phi)$ can be identified with $\bKn(A_\cpt)$ \cite{BluGepTab}, which proves the assertion for stable $C^*$-algebras.

If $C$ is properly infinite then using Proposition 2.2 of \cite{CorPhi} (see also \cite{ThomThesis}) one obtains a commutative diagram in $\CAlg$

\beq
\xymatrix{
C\ar[rr]^\iota\ar[rd]_\theta && M_2(C) \\
& C\prot\cpt \ar[ur]_\kappa,
}
\eeq where the top horizontal arrow $\iota:C\map M_2(C)$ is the corner embedding. Tensoring the above diagram with a unital $A$ and applying the functors $\KQ^\nc(-)$ and $\K^\nc(-)$ along with the natural transformation between them produces a commutative diagram

\beqn
\xymatrix{
\KQ^\nc_m(A\prot C)\ar[r]\ar[d] & \KQ^\nc_m(A\prot C\prot\cpt) \ar[r]\ar[d]^\cong & \KQ^\nc_m(M_2(A\prot C))\ar[d] \\
\K^\nc_m(A\prot C)\ar[r] & \K^\nc_m(A\prot C\prot\cpt) \ar[r] & \K^\nc_m (M_2(A\prot C)),}
\eeqn where the middle verticle arrow is an isomorphism (since $A\prot C\prot\cpt$ is stable). Observe that both $\KQ^\nc$-theory and $\K^\nc$-theory are matrix stable whence the top and the bottom horizontal compositions are isomorphisms. The assertion in the unital case now follows by a diagram chase. Finally using excision one can prove the general case.
\end{proof}

\begin{rem}
 The argument above actually shows that there is a map of spectra that induces the isomorphism at the level of homotopy groups, which are the $\KQ^\nc$-theory and $\K^\nc$-theory groups in the source and target respectively. The map of connective spectra can also be delooped inductively by a Bass--Heller--Swan splitting argument \cite{RosComparison}.
\end{rem}

\begin{rem} \label{identical}
Observe that $\Oinf$ is properly infinite whence the above Theorem \ref{KQ} is applicable to $\Oinf$-stable $C^*$-algebras. Since we already know that $\K^\nc$-theory of a stable or an $\Oinf$-stable $C^*$-algebra agrees naturally with its topological $\K$-theory (see \cite{SusWod2,CorPhi,MyComparison}), we conclude that $\KQ^\nc$-theory is naturally isomorphic to topological $\K$-theory for such a $C^*$-algebra. From the computational viewpoint it turns out that for such a $C^*$-algebra
\beqn
\text{connective $\E$-theory $\cong$ $\KQ^\nc$-theory $\cong$ $\K^\nc$-theory $\cong$ topological $\K$-theory.}
\eeqn Let us also remark that topological $\K$-theory is Bott $2$-periodic and fairly easy to compute.
\end{rem}

\begin{rem} \label{categorification}
 The above Theorems \ref{facKK} and \ref{KQ} are the key ingredients in the categorification of topological $\TT$-duality. Intuitively, our result asserts that under favourable circumstances topological $\TT$-duality induces an equivalence of noncommutative motives associated with certain $C^*$-algebras (or, more generally, noncommutative spaces). Upon passing to the nonconnective $\KQ$-theory one recovers the familiar twisted $\K$-theory isomorphism. For the details we refer the readers to (Example 4.1 of \cite{KQ} and Section 1 of \cite{MyComparison}).
\end{rem}


\bibliographystyle{abbrv}

\bibliography{/home/ibatu/Professional/math/MasterBib/bibliography}

\begin{thebibliography}{10}

\bibitem{BarJoaMe}
I.~Barnea, M.~Joachim, and S.~Mahanta.
\newblock Model structure on projective systems of {$C^*$}-algebras and
  bivariant homology theories.
\newblock {\em arXiv:1508.04283}.

\bibitem{BluGepTab}
A.~J. Blumberg, D.~Gepner, and G.~Tabuada.
\newblock A universal characterization of higher algebraic {K}-theory.
\newblock {\em Geom. Topol.}, 17(2):733--838, 2013.

\bibitem{BokNee}
M.~B{\"o}kstedt and A.~Neeman.
\newblock Homotopy limits in triangulated categories.
\newblock {\em Compositio Math.}, 86(2):209--234, 1993.

\bibitem{ConHig}
A.~Connes and N.~Higson.
\newblock D\'eformations, morphismes asymptotiques et {$K$}-th\'eorie
  bivariante.
\newblock {\em C. R. Acad. Sci. Paris S\'er. I Math.}, 311(2):101--106, 1990.

\bibitem{friendlyMatch}
G.~Corti{\~n}as.
\newblock Algebraic v. topological {$K$}-theory: a friendly match.
\newblock In {\em Topics in algebraic and topological {$K$}-theory}, volume
  2008 of {\em Lecture Notes in Math.}, pages 103--165. Springer, Berlin, 2011.

\bibitem{CorPhi}
G.~Corti{\~n}as and N.~C. Phillips.
\newblock Algebraic {$K$}-theory and properly infinite {$C^*$}-algebras.
\newblock {\em arXiv:1402.3197}.

\bibitem{CunDenLac}
J.~Cuntz, C.~Deninger, and M.~Laca.
\newblock {$C^*$}-algebras of {T}oeplitz type associated with algebraic number
  fields.
\newblock {\em Math. Ann.}, 355(4):1383--1423, 2013.

\bibitem{CunEchLi1}
J.~Cuntz, S.~Echterhoff, and X.~Li.
\newblock On the {K}-theory of the {C}*-algebra generated by the left regular
  representation of an {O}re semigroup.
\newblock {\em J. Eur. Math. Soc. (JEMS)}, 17(3):645--687, 2015.

\bibitem{CunMeyRos}
J.~Cuntz, R.~Meyer, and J.~M. Rosenberg.
\newblock {\em Topological and bivariant {$K$}-theory}, volume~36 of {\em
  Oberwolfach Seminars}.
\newblock Birkh\"auser Verlag, Basel, 2007.

\bibitem{DadMcC}
M.~Dadarlat and J.~McClure.
\newblock When are two commutative {$C^*$}-algebras stably homotopy equivalent?
\newblock {\em Math. Z.}, 235(3):499--523, 2000.

\bibitem{DadPen}
M.~Dadarlat and U.~Pennig.
\newblock A {D}ixmier--{D}ouady theory for strongly self-absorbing
  {$C^*$}-algebras.
\newblock {\em arXiv:1302.4468, to appear in J. Reine Angew. Math}.

\bibitem{DadWin}
M.~Dadarlat and W.~Winter.
\newblock On the {$KK$}-theory of strongly self-absorbing {$C^*$}-algebras.
\newblock {\em Math. Scand.}, 104(1):95--107, 2009.

\bibitem{Ambrogio}
I.~Dell'Ambrogio.
\newblock Prime tensor ideals in some triangulated categories of
  {$C^*$}-algebras.
\newblock {\em Thesis, ETH Z{\"u}rich}, 2008.

\bibitem{DugShi}
D.~Dugger and B.~Shipley.
\newblock {$K$}-theory and derived equivalences.
\newblock {\em Duke Math. J.}, 124(3):587--617, 2004.

\bibitem{Faonte}
G.~Faonte.
\newblock Simplicial nerve of an {$A_\infty$}-category.
\newblock {\em arXiv:1312.2127}.

\bibitem{GepGroNik}
D.~Gepner, M.~Groth, and T.~Nikolaus.
\newblock Universality of multiplicative infinite loop space machines.
\newblock {\em Algebr. Geom. Topol.}, 15(6):3107--3153, 2015.

\bibitem{GueHigTro}
E.~Guentner, N.~Higson, and J.~Trout.
\newblock Equivariant {$E$}-theory for {$C^*$}-algebras.
\newblock {\em Mem. Amer. Math. Soc.}, 148(703):viii+86, 2000.

\bibitem{Hig2}
N.~Higson.
\newblock Algebraic {$K$}-theory of stable {$C\sp *$}-algebras.
\newblock {\em Adv. in Math.}, 67(1):140, 1988.

\bibitem{HigETh}
N.~Higson.
\newblock Categories of fractions and excision in {$KK$}-theory.
\newblock {\em J. Pure Appl. Algebra}, 65(2):119--138, 1990.

\bibitem{HinLoc}
V.~Hinich.
\newblock Dwyer-{K}an localization revisited.
\newblock {\em arXiv:1311.4128, to appear in Homology Homotopy Appl.}

\bibitem{Hov}
M.~Hovey.
\newblock {\em Model categories}, volume~63 of {\em Mathematical Surveys and
  Monographs}.
\newblock American Mathematical Society, Providence, RI, 1999.

\bibitem{KelDG}
B.~Keller.
\newblock On differential graded categories.
\newblock In {\em International Congress of Mathematicians. Vol. II}, pages
  151--190. Eur. Math. Soc., Z\"urich, 2006.

\bibitem{KirRor}
E.~Kirchberg and M.~R{\o}rdam.
\newblock Non-simple purely infinite {$C^\ast$}-algebras.
\newblock {\em Amer. J. Math.}, 122(3):637--666, 2000.

\bibitem{KraLoc}
H.~Krause.
\newblock Localization theory for triangulated categories.
\newblock In {\em Triangulated categories}, volume 375 of {\em London Math.
  Soc. Lecture Note Ser.}, pages 161--235. Cambridge Univ. Press, Cambridge,
  2010.

\bibitem{LiAxb}
X.~Li.
\newblock Semigroup {$C^*$}-algebras of {$ax+b$}-semigroups.
\newblock {\em arXiv:1306.5553}.

\bibitem{LurHigAlg}
J.~Lurie.
\newblock Higher {A}lgebra.
\newblock {\em http://www.math.harvard.edu/$\sim$lurie/}.

\bibitem{LurToposBook}
J.~Lurie.
\newblock {\em Higher topos theory}, volume 170 of {\em Annals of Mathematics
  Studies}.
\newblock Princeton University Press, Princeton, NJ, 2009.

\bibitem{KQ}
S.~Mahanta.
\newblock Higher nonunital {Q}uillen {$K'$}-theory, {$KK$}-dualities and
  applications to topological {$\mathbb T$}-dualities.
\newblock {\em J. Geom. Phys.}, 61(5):875--889, 2011.

\bibitem{MyComparison}
S.~Mahanta.
\newblock Algebraic {$\rm K$}-theory, {$\rm K$}-regularity, and
  {$\mathbb{T}$}-duality of {$\mathcal{O}_\infty$}-stable {$C^*$}-algebras.
\newblock {\em Math. Phys. Anal. Geom.}, 18(1):Art. 12, 21 pp., 2015.

\bibitem{MyColoc}
S.~Mahanta.
\newblock Colocalizations of noncommutative spectra and bootstrap categories.
\newblock {\em Adv. Math.}, 285:72--100, 2015.

\bibitem{MyNSH}
S.~Mahanta.
\newblock Noncommutative stable homotopy and stable infinity categories.
\newblock {\em J. Topol. Anal.}, 7(1):135--165, 2015.

\bibitem{MyNGH}
S.~Mahanta.
\newblock On the {G}enerating {H}ypothesis in noncommutative stable homotopy.
\newblock {\em Math. Scand.}, 116(2):301--308, 2015.

\bibitem{PedCW}
G.~K. Pedersen.
\newblock Pullback and pushout constructions in {$C^*$}-algebra theory.
\newblock {\em J. Funct. Anal.}, 167(2):243--344, 1999.

\bibitem{QuiNonunitalK0}
D.~Quillen.
\newblock {$K\sb 0$} for nonunital rings and {M}orita invariance.
\newblock {\em J. Reine Angew. Math.}, 472:197--217, 1996.

\bibitem{RorClass}
M.~R{\o}rdam.
\newblock Classification of nuclear, simple {$C^*$}-algebras.
\newblock In {\em Classification of nuclear {$C^*$}-algebras. {E}ntropy in
  operator algebras}, volume 126 of {\em Encyclopaedia Math. Sci.}, pages
  1--145. Springer, Berlin, 2002.

\bibitem{RosComparison}
J.~Rosenberg.
\newblock Comparison between algebraic and topological {$K$}-theory for
  {B}anach algebras and {$C^*$}-algebras.
\newblock In {\em Handbook of {$K$}-theory. {V}ol. 1, 2}, pages 843--874.
  Springer, Berlin, 2005.

\bibitem{SchNegK}
M.~Schlichting.
\newblock Negative {$K$}-theory of derived categories.
\newblock {\em Math. Z.}, 253(1):97--134, 2006.

\bibitem{SchSedano}
M.~Schlichting.
\newblock Higher algebraic {$K$}-theory.
\newblock In {\em Topics in algebraic and topological {$K$}-theory}, volume
  2008 of {\em Lecture Notes in Math.}, pages 167--241. Springer, Berlin, 2011.

\bibitem{SchTopMet3}
C.~Schochet.
\newblock Topological methods for {$C^{\ast} $}-algebras. {II}. {G}eometric
  resolutions and the {K}\"unneth formula.
\newblock {\em Pacific J. Math.}, 98(2):443--458, 1982.

\bibitem{SchTopTri}
S.~Schwede.
\newblock The $p$-order of topological triangulated categories.
\newblock {\em J. Topol.}, 6(4):868--914, 2013.

\bibitem{SegKHom}
G.~Segal.
\newblock {$K$}-homology theory and algebraic {$K$}-theory.
\newblock In {\em {$K$}-theory and operator algebras ({P}roc. {C}onf., {U}niv.
  {G}eorgia, {A}thens, {G}a., 1975)}, pages 113--127. Lecture Notes in Math.,
  Vol. 575. Springer, Berlin, 1977.

\bibitem{SusWod2}
A.~A. Suslin and M.~Wodzicki.
\newblock Excision in algebraic {$K$}-theory.
\newblock {\em Ann. of Math. (2)}, 136(1):51--122, 1992.

\bibitem{ThomThesis}
A.~Thom.
\newblock Connective {$E$}-theory and bivariant homology for {$C^*$}-algebras.
\newblock {\em PhD {T}hesis, {M}{\"u}nster}, 2003.

\bibitem{TomWin}
A.~S. Toms and W.~Winter.
\newblock Strongly self-absorbing {$C^*$}-algebras.
\newblock {\em Trans. Amer. Math. Soc.}, 359(8):3999--4029, 2007.

\bibitem{Waldhausen}
F.~Waldhausen.
\newblock Algebraic {$K$}-theory of spaces.
\newblock In {\em Algebraic and geometric topology ({N}ew {B}runswick,
  {N}.{J}., 1983)}, volume 1126 of {\em Lecture Notes in Math.}, pages
  318--419. Springer, Berlin, 1985.

\bibitem{WinZStable}
W.~Winter.
\newblock Strongly self-absorbing {$C^*$}-algebras are {$\mathcal{ Z}$}-stable.
\newblock {\em J. Noncommut. Geom.}, 5(2):253--264, 2011.

\bibitem{WodFunAna}
M.~Wodzicki.
\newblock Homological properties of rings of functional-analytic type.
\newblock {\em Proc. Nat. Acad. Sci. U.S.A.}, 87(13):4910--4911, 1990.

\bibitem{WodResolution}
M.~Wodzicki.
\newblock Resolution of the cohomology comparison problem for amenable {B}anach
  algebras.
\newblock {\em Invent. Math.}, 106(3):541--547, 1991.

\end{thebibliography}

\smallskip

\end{document}